\newtheorem{theorem}{Theorem}[section]
\newtheorem{lemma}[theorem]{Lemma}
\newtheorem{proposition}[theorem]{Proposition}
\newtheorem{corollary}[theorem]{Corollary}
\newtheorem{claim}[theorem]{Claim}
\theoremstyle{definition}
\newtheorem{definition}[theorem]{Definition}
\newtheorem{remark}[theorem]{Remark}
\newcommand{\cf}{\mathrm{cf}}
\newcommand{\bb}{\mathbb}
\newcommand{\lh}{\mathrm{lh}}
\begin{document}
\title{Pseudo-Prikry sequences}
\author{Chris Lambie-Hanson}
\address{Department of Mathematics, Bar-Ilan University, Ramat Gan 5290002, Israel}
\email{lambiec@macs.biu.ac.il}
\thanks{We would like to thank Spencer Unger, conversations with whom led to the initial results 
of this paper. This work was completed while the author was a Lady Davis Postdoctoral 
Fellow at the Hebrew University of Jerusalem and a Coleman-Soref Postdoctoral Fellow 
at Bar-Ilan University; we would like to thank the Lady Davis Fellowship Trust, the Hebrew University, 
the Israel Science Foundation (grant \#1630/14), and Bar-Ilan University. 
Finally, we would like to thank the anonymous referee for a number of 
helpful corrections and suggestions.}
\maketitle

\begin{abstract}
  We generalize results of Gitik, D\v{z}amonja-Shelah, and Magidor-Sinapova on 
  the existence of pseudo-Prikry sequences, which are sequences that approximate the 
  behavior of the generic objects introduced by Prikry-type forcings, in outer models of set theory. Such sequences play 
  an important role in the study of singular cardinal combinatorics by placing 
  restrictions on the type of behavior that can consistently be obtained in outer models. In addition, we provide 
  results about the existence of diagonal pseudo-Prikry sequences, which 
  approximate the behavior of the generic objects introduced by diagonal Prikry-type forcings. 
  Our proof techniques are substantially different from those of previous results and 
  rely on an analysis of PCF-theoretic objects in the outer model.
\end{abstract}

\section{Introduction} \label{intro_section}

One of the most important methods of obtaining consistency results in cardinal
arithmetic and singular cardinal combinatorics consists of starting with a
regular cardinal and singularizing it by forcing. The earliest example of a
forcing to accomplish this task is due to Prikry and is known as \emph{Prikry
forcing}. A number of related forcing notions have been developed since then;
they are known collectively as \emph{Prikry-type forcings}\footnote{For an introduction
  to Prikry-type forcings, the reader is directed to \cite{gitikhandbook}.}.

If $\kappa$ is a measurable cardinal and $U$ is a normal ultrafilter on
$\kappa$, then the Prikry forcing $\bb{P}_U$ adds an $\omega$-sequence, $\langle
\gamma_n \mid n < \omega \rangle$, known as the \emph{Prikry sequence}, that is
cofinal in $\kappa$ and diagonalizes $U$, i.e., for all $X \in U$ and all
sufficiently large $n < \omega$, we have $\gamma_n \in X$. $\bb{P}_U$ adds no bounded
subsets of $\kappa$ and has the $\kappa^+$-c.c. and thus preserves $\kappa^+$.
Other Prikry-type forcings similarly produce sequences that diagonalize
ground-model ultrafilters or sequences of ultrafilters.

A series of results by Gitik \cite{gitik, gitik_witnessing_seq}, D\v{z}amonja
and Shelah \cite{dzamonja_shelah}, and Magidor and Sinapova
\cite{magidor_sinapova} shows that, in certain abstract settings in which $W$ is
an outer model of $V$ and $\kappa$ is a regular cardinal in $V$ that has been
singularized in $W$, one can find in $W$ a sequence in $\kappa$, often called a
\emph{pseudo-Prikry sequence}, that approximates the behavior of a generic
sequence added by a Prikry-type forcing.  Since there may not be a relevant
normal ultrafilter in $V$, the natural object for the pseudo-Prikry sequence to
diagonalize is the club filter on $\kappa$ or $\mathcal{P}_\kappa(\lambda)$ for
some $\lambda > \kappa$.

In this paper, we revisit and extend these results. In the process, we connect 
them with ideas from Shelah's PCF theory. The following theorem is the
starting point for our investigations.  It is due, in the case in which $\kappa$
is inaccessible in $V$ and a singular cardinal in $W$, to D\v{z}amonja and Shelah
\cite{dzamonja_shelah} and, in its general form, to Magidor and Sinapova
\cite{magidor_sinapova}.

\begin{theorem} \label{dzamonja_shelah_theorem}
  Suppose that:
  \begin{enumerate}
    \item $V$ is an inner model of $W$;
    \item in $V$, $\kappa$ is a regular cardinal;
    \item in $W$, $(\kappa^+)^V$ is a cardinal and $|\kappa| > \cf(\kappa) =: \theta$;
    \item in $V$, $\langle C_\alpha \mid \alpha < \kappa^+ \rangle$ is a sequence of 
      clubs in $\kappa$.
  \end{enumerate}
  Then, in $W$, there is a sequence of ordinals $\langle \gamma_i \mid i < \theta \rangle$ 
  such that, for all $\alpha < (\kappa^+)^V$ and all sufficiently large $i < \theta, ~ 
  \gamma_i \in C_\alpha$. Moreover, for any fixed $\eta < |\kappa|$, we may require that, 
  for all $i < \theta$, $\cf(\gamma_i) > \eta$.
\end{theorem}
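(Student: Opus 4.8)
The plan is to read off $\langle \gamma_i \mid i<\theta\rangle$ from an \emph{exact upper bound}, computed in $W$, for a sequence of functions coded by the clubs $C_\alpha$; the role of PCF theory (in the outer model $W$) is to guarantee that such an exact upper bound exists.

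\emph{Reduction in $V$.} First I would replace $\langle C_\alpha \mid \alpha<\kappa^+\rangle$ by a more coherent sequence of clubs. Fix in $V$ a partition $\langle S_\alpha \mid \alpha<\kappa^+\rangle$ of the limit ordinals in $[\kappa,\kappa^+)$ into pairwise disjoint sets, each cofinal in $\kappa^+$, with $\min(S_\alpha)>\alpha$; and for each $\beta\in[\kappa,\kappa^+)$ fix a bijection $e_\beta : \kappa \to \beta$ with $e_\beta(0)=\alpha$ whenever $\beta\in S_\alpha$. Define clubs $\langle D_\beta \mid \beta<\kappa^+\rangle$ in $\kappa$ recursively by $D_\beta = \bigcap_{\gamma\le\beta}C_\gamma$ for $\beta<\kappa$ (a club, since $\kappa$ is regular in $V$) and $D_\beta = C_\beta \cap \{\gamma<\kappa : (\forall\xi<\gamma)\ \gamma\in D_{e_\beta(\xi)}\}$ for $\beta\ge\kappa$ (a club, being the intersection of $C_\beta$ with a diagonal intersection of $\kappa$ clubs). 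A short computation gives: each $D_\beta\subseteq C_\beta$; the sequence is $\subseteq^*$-decreasing (i.e.\ $\alpha<\beta$ implies $D_\beta\setminus D_\alpha$ is bounded in $\kappa$, with bound essentially $e_\beta^{-1}(\alpha)$ when $\beta\ge\kappa$); and for every $\alpha<\kappa^+$ the set $\{\beta<\kappa^+ : D_\beta\setminus\{0\}\subseteq D_\alpha\}$ is cofinal in $\kappa^+$, since it contains $S_\alpha$. Consequently it suffices to produce, in $W$, a $\theta$-sequence of ordinals below $\kappa$, each of $W$-cofinality greater than $\eta$, that lies in every $D_\alpha$ on a tail.

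\emph{The PCF object.} Now I work in $W$. Since $(\kappa^+)^V$ is a cardinal of $W$ above $|\kappa|$, it equals $(|\kappa|^+)^W$ and is in particular regular in $W$; also $|\kappa|>\theta$, so replacing $\eta$ by $\max(\eta,\theta)$ we may assume $\theta\le\eta<|\kappa|$. Fix an increasing sequence $\langle\kappa_i \mid i<\theta\rangle$ cofinal in $\kappa$ with $\kappa_0>0$, and for $\alpha<\kappa^+$ define $h_\alpha : \theta\to\kappa$ by $h_\alpha(i)=\min(D_\alpha\setminus(\kappa_i+1))$. Then $\kappa_i<h_\alpha(i)\in D_\alpha$, so $\sup_i h_\alpha(i)=\kappa$, and the $\subseteq^*$-decreasing property makes $\langle h_\alpha \mid \alpha<\kappa^+\rangle$ be $\le^*$-increasing modulo the ideal of bounded subsets of $\theta$. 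This $\le^*$-increasing sequence has length $(\kappa^+)^V$, a regular cardinal of $W$ greater than $\eta^+$, and I would invoke a version of Shelah's trichotomy theorem (relativized to $\eta$) for it: it produces either an exact upper bound $g$ with $\cf^W(g(i))>\eta$ for all large $i$, or one of the Bad/Ugly alternatives. The crux --- and the point I expect to be the main obstacle --- is to exclude those alternatives here, using that $(\kappa^+)^V$ is regular in $W$ (which blocks the ``no stabilization'' behaviour of the Ugly case) together with the closedness of the sets $D_\alpha$.

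\emph{Extraction from the eub.} Granting the exact upper bound $g$: since $h_\alpha\le^* g$ and $\sup_i h_\alpha(i)=\kappa$ we get $\sup_i g(i)=\kappa$; since $\langle\sup_{\alpha<\kappa^+} h_\alpha(i)\mid i<\theta\rangle$ is an upper bound with values $\le\kappa$, we get $g(i)\le\kappa$ for all large $i$; and since $\cf^W(g(i))>\eta\ge\theta=\cf^W(\kappa)$ it follows that $g(i)<\kappa$ (and $\cf^W(g(i))>\eta$) for all large $i$. To see, for a fixed $\alpha<\kappa^+$, that $g(i)\in D_\alpha$ on a tail, set $\tilde g(i)=\sup(D_\alpha\cap(g(i)+1))$: since $D_\alpha$ is closed, $\tilde g(i)\in D_\alpha$, $\tilde g(i)\le g(i)$, and $\tilde g(i)=g(i)$ exactly when $g(i)\in D_\alpha$. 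For $\beta\in S_\alpha$ we have $D_\beta\setminus\{0\}\subseteq D_\alpha$ and $h_\beta(i)>\kappa_0>0$, hence $h_\beta(i)\in D_\alpha$, and therefore $h_\beta(i)\le\tilde g(i)$ wherever $h_\beta(i)\le g(i)$; thus $h_\beta\le^*\tilde g$ for every $\beta\in S_\alpha$. As $S_\alpha$ is cofinal in $(\kappa^+)^V$ and $\langle h_\beta\rangle$ is $\le^*$-increasing, $\tilde g$ is an upper bound for the whole sequence, so by minimality of the eub and $\tilde g\le g$ pointwise we get $\tilde g=^* g$, i.e.\ $g(i)\in D_\alpha$ for all large $i$. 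Finally put $\gamma_i=g(i)$ for $i$ beyond all of the (finitely many, boundedly placed) exceptional initial segments invoked above, and $\gamma_i=\eta^+$ otherwise --- note $\eta^+<\kappa$ since $\eta<|\kappa|$, and $\cf^W(\eta^+)=\eta^+>\eta$. Then $\langle\gamma_i\mid i<\theta\rangle$ works: $\cf^W(\gamma_i)>\eta$ for every $i$, and for each $\alpha<\kappa^+$ we have $\gamma_i=g(i)\in D_\alpha\subseteq C_\alpha$ for all sufficiently large $i$. Essentially all of the difficulty is concentrated in the exclusion of the bad trichotomy alternatives in the previous paragraph; the $V$-side bookkeeping, the monotonicity of $\langle h_\alpha\rangle$, and the closure argument above are routine by comparison.
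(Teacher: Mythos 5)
Your opening reduction (the coherent clubs $D_\beta$ built in $V$) and your closing extraction argument (showing that an eub $g$ with large pointwise cofinalities must land in every $D_\alpha$ on a tail, via $\tilde g(i)=\sup(D_\alpha\cap(g(i)+1))$ and the catch-up sets $S_\alpha$) are both sound, and they run parallel to pieces of the machinery this paper actually uses (its conditions (2) and (4) on the functions $f_{i,\beta}$, and the final $\bar h$-contradiction in the proof of Theorem \ref{fat_tree_guessing_theorem}). But there is a genuine gap exactly where you flag it: the existence, in $W$, of an exact upper bound whose values have $W$-cofinality $>\eta$. You propose to get it from ``a version of Shelah's trichotomy relativized to $\eta$'' and to exclude the Bad/Ugly alternatives ``using that $(\kappa^+)^V$ is regular in $W$ together with the closedness of the sets $D_\alpha$,'' but no such exclusion is carried out, and neither of those two facts suffices: essentially the entire content of Theorem \ref{dzamonja_shelah_theorem} is concentrated in this step (this is why Gitik's earlier version needed $|\kappa|>2^\theta$, where eub arguments become easy, and why the D\v{z}amonja--Shelah and Magidor--Sinapova proofs are nontrivial). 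Two concrete problems: your sequence $\langle h_\alpha\mid\alpha<\kappa^+\rangle$ is only $\le^*$-increasing, not $<^*$-increasing, so trichotomy does not even apply as stated; and, more seriously, to obtain an eub with $\cf^W(g(i))>\eta$ one needs (as in Lemma \ref{club_guessing_lemma}, the form of Shelah's result used here) a flatness hypothesis at ordinals of cofinality $\eta^{++}$ --- clubs $c_\gamma\subseteq\gamma$ with $\sup\{h_\beta\mid\beta\in c_\gamma\}<_I h_\gamma$ --- and nothing about your $W$-defined functions $h_\alpha(i)=\min(D_\alpha\setminus(\kappa_i+1))$ provides such points.

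The idea your proposal is missing is the paper's central device (Theorem \ref{fat_tree_guessing_theorem}, specialized via Corollary \ref{ms_cor}): the relevant sequence of functions is constructed in $V$, not read off the clubs in $W$. Working in $V$, where $\kappa$ and $\kappa^+$ are regular, one builds $\langle f_\beta\mid\beta<\kappa^+\rangle$ with values below $\kappa$ so that (a) at every $\gamma$ of $V$-cofinality $<\kappa$ there is a club $c_\gamma\subseteq\gamma$ with $\sup\{f_\beta\mid\beta\in c_\gamma\}<f_\gamma$ pointwise (possible because a supremum of fewer than $\kappa$ functions into the regular $\kappa$ is still below $\kappa$), and (b) for all $\alpha,\beta$ some $f_\gamma$ dominates $x\mapsto\min(C_\alpha\setminus f_\beta(x))$. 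In $W$, every $\gamma<(\kappa^+)^V$ of $W$-cofinality $>\theta$ has $V$-cofinality $<\kappa$ (all $V$-regular cardinals in the relevant interval acquire $W$-cofinality $\le\theta$), so the flat points of (a) survive and Lemma \ref{club_guessing_lemma} applies directly --- no trichotomy case analysis is ever needed --- yielding the eub with the desired cofinalities for every $\eta<|\kappa|$ simultaneously; clause (b) then gives the tail-catching, much as your $\tilde g$ argument does. Unless you can supply the excluded-middle step by some other means, your outline stops short of a proof precisely at its acknowledged crux.
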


A similar result was proved by Gitik \cite{gitik} under the additional
assumption that, in $W$, $|\kappa| > 2^\theta$.  Theorem
\ref{dzamonja_shelah_theorem} has a number of combinatorial applications,
including the following two.

\begin{theorem}[Cummings-Schimmerling, \cite{cummings_schimmerling}] \label{cs_theorem}
  Suppose that:
  \begin{enumerate}
    \item $V$ is an inner model of $W$;
    \item in $V$, $\kappa$ is an inaccessible cardinal;
    \item in $W$, $\kappa$ is a singular cardinal and $\cf(\kappa) = \omega$;
    \item $(\kappa^+)^W = (\kappa^+)^V$.
  \end{enumerate}
  Then, in $W$, $\square_{\kappa, \omega}$ holds.
\end{theorem}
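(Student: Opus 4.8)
The goal is to construct in $W$ a sequence $\langle \mathcal{C}_\alpha \mid \alpha < \kappa^+ \rangle$ witnessing $\square_{\kappa,\omega}$: for every limit $\alpha < \kappa^+$, $\mathcal{C}_\alpha$ is to be a nonempty set of at most $\omega$ clubs in $\alpha$, each of order type $\le \kappa$, with the coherence condition that $D \in \mathcal{C}_\alpha$ and $\beta \in \acc(D)$ imply $D \cap \beta \in \mathcal{C}_\beta$. The plan is to transfer a weak square sequence from $V$ up to $W$ and then use a pseudo-Prikry sequence, supplied by Theorem \ref{dzamonja_shelah_theorem}, to thin it out into a genuine $\square_{\kappa,\omega}$-sequence. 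The transfer by itself will not suffice — $\square^*_\kappa$ together with $\cf(\kappa) = \omega$ does not imply $\square_{\kappa,\omega}$ — so the pseudo-Prikry sequence, which is precisely the point at which the hypotheses that $\kappa$ is regular in $V$ and that $(\kappa^+)^V$ is preserved get used, must do essential work.

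The scaffold comes from $V$. Since $\kappa$ is inaccessible in $V$ we have $\kappa^{<\kappa} = \kappa$ there, so $V$ has a special $\kappa^+$-Aronszajn tree and hence $\square^*_\kappa$ holds in $V$; fix a witnessing sequence $\vec{\mathcal{D}} = \langle \mathcal{D}_\alpha \mid \alpha < \kappa^+ \rangle$ with $1 \le |\mathcal{D}_\alpha| \le \kappa$, every $D \in \mathcal{D}_\alpha$ a club in $\alpha$ of order type $\le \kappa$, and $D \in \mathcal{D}_\alpha$, $\beta \in \acc(D) \Rightarrow D \cap \beta \in \mathcal{D}_\beta$. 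By hypotheses (3) and (4), $\kappa$ and $(\kappa^+)^V$ remain cardinals in $W$, and clubs and order types are absolute between $V$ and $W$, so $\vec{\mathcal{D}}$ is still a $\square^*_\kappa$-sequence in $W$; its only defect is that the $\mathcal{D}_\alpha$ can have size $\kappa$ rather than being countable. I would also fix in $V$, for each $\alpha$, a surjection $g_\alpha \colon \kappa \to \mathcal{D}_\alpha$, and, for each $D$ of order type $\kappa$, its increasing enumeration $e_D \colon \kappa \to D$.

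Next I would apply Theorem \ref{dzamonja_shelah_theorem} with $\theta = \cf^W(\kappa) = \omega$ — its hypotheses are immediate from those of the present theorem — to a sequence $\langle C_\alpha \mid \alpha < \kappa^+ \rangle$ of clubs in $\kappa$, chosen in $V$ so as to record, in terms of subsets of $\kappa$, the surjections $g_\alpha$ together with the coherence relations among the $\mathcal{D}_\alpha$, invoking the ``moreover'' clause to demand $\cf(\gamma_n)$ above a convenient threshold if it helps. This produces in $W$ a sequence $\langle \gamma_n \mid n < \omega \rangle$ that eventually enters every $C_\alpha$. With it I would define $\mathcal{C}_\alpha$ as follows: each $D \in \mathcal{D}_\alpha$ of order type exactly $\kappa$ is \emph{replaced} by $\{e_D(\gamma_n) \mid n < \omega\}$, which is a club in $\alpha$ of order type $\omega$ and hence has no accumulation points and imposes no coherence constraint at all; and among the $D \in \mathcal{D}_\alpha$ of order type $< \kappa$ one retains exactly those selected by $\langle \gamma_n \mid n < \omega \rangle$ via the encoding of the $g_\alpha$. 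One then verifies, by recursion on $\alpha < \kappa^+$, that $\mathcal{C}_\alpha$ is nonempty (because $\langle \gamma_n \rangle$ meets the relevant $C_\alpha$ cofinally), has size $\le \omega$ (only countably many $\gamma_n$), consists of clubs of order type $\le \kappa$ (inherited from $\vec{\mathcal{D}}$), and is coherent (from the coherence of $\vec{\mathcal{D}}$ together with the fact that a correctly designed selection criterion is inherited by $D \cap \beta$).

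The crux — the step I expect to be the main obstacle — is arranging all of this so that the selection is \emph{simultaneously} nonempty at each of the $\kappa^+$ levels, countable, and closed under $D \mapsto D \cap \beta$ for $\beta \in \acc(D)$, everything being extracted from a single length-$\omega$ sequence $\langle \gamma_n \rangle$. The difficulty concentrates at the levels $\alpha$ with $\cf^V(\alpha) < \kappa$: there every member of $\mathcal{D}_\alpha$ already has order type below $\kappa$, so the cofinal sequence $\langle \gamma_n \rangle$ does not directly shrink it, and the coherence among such levels has to be carried entirely by the recursion. Making the recursion go through — showing in particular that at every such $\alpha$ at least one club survives while only countably many do — is exactly the part that consumes the preservation of $(\kappa^+)^V$ and the full strength of Theorem \ref{dzamonja_shelah_theorem}, and it is also the part for which the $V$-side encoding into the clubs $C_\alpha$ must be set up with care, so that $\langle \gamma_n \rangle$ controls the selection uniformly across all of $\kappa^+$ rather than merely one level at a time.
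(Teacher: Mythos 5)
The paper does not actually reprove this theorem: it cites Cummings--Schimmerling and merely observes that their argument goes through once a sequence as in Theorem \ref{dzamonja_shelah_theorem} replaces the true Prikry sequence. Measured against that argument, your proposal has a genuine gap, and it sits exactly where you flag it. The Cummings--Schimmerling construction does not thin a $\square^*_\kappa$-sequence. Instead, in $V$ one fixes, for each limit $\alpha < \kappa^+$, a canonical increasing continuous filtration $\langle x^\alpha_i \mid i < \kappa \rangle$ of $\alpha$ into sets of size $<\kappa$ (here the inaccessibility of $\kappa$ is used, both to get the filtration and to keep the order types of closures below $\kappa$), chosen coherently (via hulls in a fixed well-ordered structure), and the clubs $C_\alpha \subseteq \kappa$ fed into Theorem \ref{dzamonja_shelah_theorem} are the sets of ``good'' indices for $\alpha$. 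In $W$ one then sets $\mathcal{C}_\alpha$ to consist of the clubs $\mathrm{cl}(x^\alpha_{\gamma_n}) \cap \alpha$ for the pseudo-Prikry points $\gamma_n \in C_\alpha$ (and, at levels of $V$-cofinality $\kappa$, the induced $\omega$-sequences, much as in your sketch). Countability is automatic because each level contributes at most one club per $n$, and coherence is \emph{index-wise}: if $\beta \in \acc(\mathrm{cl}(x^\alpha_i) \cap \alpha)$ for a good index $i$, the construction guarantees $\mathrm{cl}(x^\alpha_i) \cap \beta = \mathrm{cl}(x^\beta_i) \cap \beta$, with the \emph{same} $i$. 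That canonical indexing of the level-$\alpha$ data by ordinals $i < \kappa$ is precisely what lets a single $\omega$-sequence of indices control all $\kappa^+$ levels simultaneously.

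Your plan has no such mechanism, and the step you call the crux is exactly where it fails rather than merely where it is hard. An arbitrary $\square^*_\kappa$-family $\mathcal{D}_\alpha$ is enumerated by a surjection $g_\alpha : \kappa \rightarrow \mathcal{D}_\alpha$ bearing no relation to the enumerations at lower levels: if $D = g_\alpha(\xi)$ and $\beta \in \acc(D)$, the index $g_\beta^{-1}(D \cap \beta)$ is arbitrary. The only leverage Theorem \ref{dzamonja_shelah_theorem} gives you is membership of the $\gamma_n$ in $V$-definable clubs $C_\alpha \subseteq \kappa$, and the two natural ways to convert this into a selection both fail: retaining the initial segment $\{g_\alpha(\xi) \mid \xi < \gamma_n\}$ for some $\gamma_n \in C_\alpha$ (with $C_\alpha$ encoding closure under restrictions with indices below the cut-off) is coherent but has size $|\gamma_n|$, which yields only $\square_{\kappa, <\kappa}$, not $\square_{\kappa,\omega}$; retaining $\{g_\alpha(\gamma_n) \mid n < \omega\}$ is countable but not closed under $D \mapsto D \cap \beta$, and no choice of the $C_\alpha$ in $V$ can force the unrelated index of $D \cap \beta$ at level $\beta$ into the fixed countable set $\{\gamma_n \mid n < \omega\}$. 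Since, as you yourself note, $\square^*_\kappa$ together with $\cf(\kappa) = \omega$ does not imply $\square_{\kappa,\omega}$, this is not a cosmetic omission: to repair it you must choose the $V$-side family to be canonically indexed and index-wise coherent, i.e., carry out the filtration construction sketched above, at which point the $\square^*_\kappa$ scaffold and the surjections $g_\alpha$ are doing no work. (The peripheral parts of your proposal are fine: $\square^*_\kappa$ does hold in $V$, it is upward absolute here, and replacing the order-type-$\kappa$ clubs at $V$-cofinality-$\kappa$ levels by the $\omega$-sequences $\{e_D(\gamma_n) \mid n < \omega\}$ correctly discharges all coherence obligations at those levels.)
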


Cummings and Schimmerling prove Theorem \ref{cs_theorem} in the
special case in which $W$ is an extension of $V$ by Prikry forcing at $\kappa$,
but their proof works with a sequence as in Theorem
\ref{dzamonja_shelah_theorem} in place of a true Prikry sequence.

\begin{theorem}[Brodsky-Rinot, \cite{brodsky_rinot_forcing}]
  Suppose that $\lambda$ is a regular, uncountable cardinal, $2^\lambda = \lambda^+$, and $\bb{P}$ is a 
  $\lambda^+$-c.c.\ forcing notion of size $\leq \lambda^+$. Suppose moreover that, in 
  $V^{\bb{P}}$, $\lambda$ is a singular ordinal and $|\lambda| > \cf(\lambda)$. Then, 
  in $V^{\bb{P}}$, there is a $(\lambda^+)^V$-Souslin tree.
\end{theorem}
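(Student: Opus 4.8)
The plan is to manufacture, inside $W := V^{\bb{P}}$, the two standard inputs of a Souslin-tree construction for $\mu := (\lambda^+)^V$ --- a $\diamondsuit$-sequence and a coherent system of cofinal sets at the limit levels along which branches can be threaded --- and then to run (a version of) the classical recursion, organized as in Brodsky and Rinot's ``microscopic'' method. First I would fix the parameters and check that the target makes sense: set $\theta := \cf^W(\lambda)$; since $|\bb{P}| \le \lambda^+$ and $\bb{P}$ has the $\lambda^+$-c.c., $\bb{P}$ preserves all cardinals and cofinalities $\ge \mu$, so $\mu$ is a regular cardinal of $W$, and a brief argument using $V \subseteq W$ shows that $W$ has no cardinal strictly between $|\lambda|^W$ and $\mu$, so $\mu = (|\lambda|^+)^W$; moreover $|\lambda|^W$ is uncountable in $W$, for otherwise $\lambda$ would be both countable and of countable cofinality there, contradicting hypothesis $(3)$. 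Thus $\mu$ is a successor cardinal of $W$, and note that hypotheses $(1)$--$(4)$ of Theorem \ref{dzamonja_shelah_theorem} are available with $\kappa = \lambda$ once a sequence of clubs of $\lambda$ is fixed in $V$.

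For the diamond, I would argue in $V$: from $\lambda^+ = 2^\lambda$ we get $(\lambda^+)^\lambda = 2^\lambda = \lambda^+$, and since every antichain of $\bb{P}$ has size $\le \lambda$ there are at most $(\lambda^+)^\lambda = \lambda^+$ nice $\bb{P}$-names for subsets of $\lambda$; hence $|\mathcal{P}(\lambda)^W| = \lambda^+ = \mu$. Because $\mu = (|\lambda|^+)^W$ with $|\lambda|^W$ uncountable, this is precisely the assertion that $2^{|\lambda|} = |\lambda|^+$ holds in $W$, so Shelah's theorem (for uncountable $\kappa$, $2^\kappa = \kappa^+$ implies $\diamondsuit(\kappa^+)$) yields $\diamondsuit(\mu)$ in $W$; with a little more care it also yields $\diamondsuit(S)$ for the stationary set $S := \{\delta < \mu \mid \cf^V(\delta) = \lambda\}$, on which the sealing of antichains will be done. (This $S$ is stationary in $\mu$ in $V$ and stays stationary in $W$ by $\mu$-c.c.\ preservation of stationarity, and $S \subseteq S^\mu_\theta$ as computed in $W$, since a club of order type $\lambda$ has $W$-cofinality $\theta$.)

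For the threading, I would work in $V$: for every limit $\delta < \mu$ fix a club $e_\delta \subseteq \delta$ with $\otp(e_\delta) \le \lambda$, chosen of order type exactly $\lambda$ whenever $\delta \in S$, with increasing enumeration $\pi_\delta : \otp(e_\delta) \to e_\delta$. Next, apply Theorem \ref{dzamonja_shelah_theorem} to a sequence $\langle C_\alpha \mid \alpha < \lambda^+ \rangle$ of clubs of $\lambda$ in $V$ rich enough to force the pseudo-Prikry sequence it produces to be cofinal in $\lambda$ (include the tail sets $\lambda \setminus \beta$ for $\beta < \lambda$), invoking the ``moreover'' clause so that each entry has prescribed large cofinality; this gives in $W$ a sequence $\langle \gamma_i \mid i < \theta \rangle$, cofinal in $\lambda$ and eventually inside every $C_\alpha$. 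For $\delta \in S$ set $c_\delta := \pi_\delta[\{\gamma_i \mid i < \theta\}]$, a cofinal subset of $\delta$ of order type $\theta$ lying in $W$ but not in $V$. Then I would build the $\mu$-tree $T$ by recursion on levels: successor levels and limit levels of small cofinality are routine; at each $\delta \in S$ one uses $c_\delta$ to run, above every node of $T \restriction \delta$, a branch meeting the maximal antichain guessed by $\diamondsuit(S)$ and then puts a single node at level $\delta$ above that branch, which seals all maximal antichains of the final tree; and every branch chosen at a limit level is slaved to the single sequence $\langle \gamma_i \rangle$, pulled back through the relevant $\pi_\delta$. The resulting $T$ has levels of size $< \mu$ and (by the sealing) no antichain of size $\mu$, so it is a $\mu$-Souslin tree in $W$.

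The hard part will be the coordination in the last step --- making the cofinal sets $c_\delta$, together with the more delicate branch choices at limit levels outside $S$, cohere well enough for the recursion to close up, concretely that $c_{\bar\delta} = c_\delta \cap \bar\delta$ whenever $\bar\delta$ is an accumulation point of $c_\delta$, so that the branch followed above $\bar\delta$ really does extend the one followed below. Since $2^\lambda = \lambda^+$ supplies no $\square(\mu)$ in $V$, this coherence cannot be imported and must be synthesized in $W$; I expect this to be where the PCF-theoretic apparatus of the paper is needed, fixing in $W$ a scale for $\lambda$ (equivalently, its sequence of generators) and exploiting the large-cofinality clause of Theorem \ref{dzamonja_shelah_theorem} to align the maps $\pi_\delta^{-1}$ across distinct $\delta$. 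Pinning this alignment down --- extracting from the pseudo-Prikry material a genuinely coherent threading system on $\mu$ --- is the technical heart; once it is in hand, the remainder is Shelah's diamond theorem plus the Souslin-tree machinery of Brodsky and Rinot applied essentially verbatim.
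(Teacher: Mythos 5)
Your proposal correctly assembles the standard ingredients -- $\mu := (\lambda^+)^V = (|\lambda|^+)^W$ is preserved, nice names plus $2^\lambda = \lambda^+$ give $2^{|\lambda|} = \mu$ in $W$, Shelah's theorem then gives diamond at $\mu$, and Theorem \ref{dzamonja_shelah_theorem} supplies a sequence threading any prescribed $\lambda^+$-list of $V$-clubs of $\lambda$ -- and this is indeed the general route (the paper gives no proof; Brodsky and Rinot prove the theorem by verifying their proxy principle in $V^{\bb{P}}$, using precisely the D\v{z}amonja--Shelah/Magidor--Sinapova theorem together with a bookkeeping construction in $V$ made possible by $2^\lambda = \lambda^+$). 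But as written the proposal has a genuine gap, and you name it yourself: the recursion at limit levels does not close up unless the chosen cofinal sets (the $c_\delta$ on $S$, and whatever ladders are used at the remaining limit levels, whose cofinalities range over all regulars below the now-singular $|\lambda|$) form a sufficiently coherent system, so that the partial branch built below an accumulation point $\bar\delta$ of $c_\delta$ actually has a top at level $\bar\delta$. Saying that ``the PCF-theoretic apparatus should align the maps $\pi_\delta^{-1}$'' is a statement of the problem, not a solution: extracting from the pseudo-Prikry material a square-like/proxy witnessing sequence in $W$ (and formulating a coherence requirement weak enough to be so extracted yet strong enough to drive the tree construction) is exactly the content of the Brodsky--Rinot argument, and it is the one step your outline does not carry out. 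Note that when $\theta = \cf^W(\lambda) = \omega$ the sets $c_\delta$ have no accumulation points (this is the Cummings--Schimmerling phenomenon behind $\square_{\kappa,\omega}$), but the theorem allows uncountable $\theta$, where the issue is unavoidable.

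A second, smaller problem: your set $S = \{\delta < \mu \mid \cf^V(\delta) = \lambda\}$ satisfies $S \subseteq S^\mu_\theta$ in $W$, and $\theta = \cf^W(|\lambda|)$ is precisely the cofinality \emph{excluded} by Shelah's theorem that $2^\kappa = \kappa^+$ implies $\diamondsuit(T)$ for stationary $T \subseteq S^{\kappa^+}_{\neq \cf(\kappa)}$; so ``with a little more care it also yields $\diamondsuit(S)$'' is unjustified (for the critical cofinality this is not known to follow from $2^{|\lambda|} = |\lambda|^+$ alone). The construction has to be organized so that sealing occurs at levels where a $\diamondsuit(\mu)$-sequence (or a guess on the permitted cofinalities) happens to be correct and the threading data is available, rather than assuming diamond concentrated on $S$ itself. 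Both repairs are possible, but they require the proxy-principle-style bookkeeping that constitutes the actual proof.
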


In \cite{magidor_sinapova}, Magidor and Sinapova generalize Theorem \ref{dzamonja_shelah_theorem} 
to the context of clubs in $\mathcal{P}_\kappa(\kappa^{+m})$ for $m < \omega$. 

\begin{theorem}[Magidor-Sinapova, \cite{magidor_sinapova}] \label{magidor_sinapova_theorem}
  Suppose that:
  \begin{enumerate}
    \item $V$ is an inner model of $W$;
    \item in $V$, $\kappa$ is a regular cardinal;
    \item $m < \omega$ and, in $W$, for all $k \leq m$, $\cf((\kappa^{+k})^V) = \omega$;
    \item in $W$, $(\kappa^{+m+1})^V$ remains a cardinal and $\kappa > \omega_1$;
    \item in $V$, $\langle \mathcal{D}_\alpha \mid \alpha < \kappa^{+m+1} \rangle$ is a 
      sequence of clubs in $\mathcal{P}_\kappa(\kappa^{+m})$.
  \end{enumerate}
  Then, in $W$, there is a sequence $\langle x_i \mid i < \omega \rangle$ such that, for all 
  $\alpha < (\kappa^{+m+1})^V$ and all sufficiently large $i < \omega$, $x_i \in \mathcal{D}_\alpha$.
\end{theorem}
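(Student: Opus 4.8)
\noindent\emph{Proof strategy.}
The plan is to reduce to clubs of elementary submodels, to extract the ``top coordinate'' by an application of Theorem~\ref{dzamonja_shelah_theorem}, and then to build the required submodels from a cofinal branch through a scale of length $(\kappa^{+m+1})^V$ constructed in $W$. To normalize, fix in $V$ a sufficiently large regular cardinal $\chi$ and a well-order $\lhd$ of $H(\chi)$, and for each $\alpha < (\kappa^{+m+1})^V$ let $\mathcal{E}_\alpha$ be the collection of $M \prec (H(\chi), \in, \lhd)$ with $|M| < \kappa$, $M \cap \kappa \in \kappa$, $\alpha \in M$, and $\langle \mathcal{D}_\beta \mid \beta < \kappa^{+m+1}\rangle \in M$. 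Each $\mathcal{E}_\alpha$ is a club in $[H(\chi)]^{<\kappa}$, and by the usual characterization of clubs in $\mathcal{P}_\kappa(\kappa^{+m})$ via closure under a single function $F \colon [\kappa^{+m}]^{<\omega} \to \kappa^{+m}$ one has $M \cap \kappa^{+m} \in \mathcal{D}_\alpha$ for every $M \in \mathcal{E}_\alpha$; hence it suffices to produce in $W$ a sequence $\langle M_i \mid i < \omega\rangle$ of such submodels that is eventually in each $\mathcal{E}_\alpha$ and to set $x_i = M_i \cap \kappa^{+m}$. For such an $M$ write $\mathrm{ch}(M) = \langle M \cap \kappa, \sup(M \cap \kappa^{+1}), \dots, \sup(M \cap \kappa^{+m})\rangle$ for its characteristic function.

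Now put $E_\alpha = \{\sup(M \cap \kappa^{+m}) \mid M \in \mathcal{E}_\alpha\}$, a club in $(\kappa^{+m})^V$, and apply Theorem~\ref{dzamonja_shelah_theorem} with $(\kappa^{+m})^V$, $(\kappa^{+m+1})^V$, and $\omega$ in the roles of $\kappa$, $(\kappa^+)^V$, and $\theta$: this is legitimate, since $(\kappa^{+m})^V$ is regular in $V$, $(\kappa^{+m+1})^V$ is a cardinal in $W$, and in $W$ we have $\cf((\kappa^{+m})^V) = \omega$ while $|(\kappa^{+m})^V| \ge \omega_1$ because $\kappa > \omega_1$. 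Taking $\eta = \omega$ in the ``moreover'' clause (and harmlessly augmenting $\langle \mathcal{D}_\beta\rangle$ first so as to force cofinality), we obtain in $W$ a sequence $\langle \rho_i \mid i < \omega\rangle$, increasing and cofinal in $(\kappa^{+m})^V$ with every $\cf(\rho_i)^W$ uncountable, such that $\rho_i \in E_\alpha$ for all $\alpha$ and all large $i$. It remains to realize each $\rho_i$ as $\sup(M_i \cap \kappa^{+m})$ for a single $M_i$ that lies in $\mathcal{E}_\alpha$ for all $\alpha$ below a threshold tending to $(\kappa^{+m+1})^V$ as $i \to \omega$; fixing $i$, the models $M$ with $\sup(M \cap \kappa^{+m}) = \rho_i$ form, after transport along a $V$-bijection between $\rho_i$ and $(\kappa^{+m-1})^V$, a configuration of the same shape one level down, so the construction wants to descend through the coordinates $\kappa^{+m}, \kappa^{+m-1}, \dots, \kappa$, at each step peeling off $\sup(M \cap \kappa^{+k})$ against a club in $(\kappa^{+k})^V$ and bottoming out at $\kappa$, where Theorem~\ref{dzamonja_shelah_theorem} applies again directly.

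The main obstacle is that this descent does not close up as a clean induction on $m$: each step must accommodate roughly one more ``layer'' of clubs than the $(m-1)$-instance of the theorem delivers, and the sub-problems are not literally such instances --- indeed, in the situation at hand the cardinals $\kappa, \kappa^+, \dots, \kappa^{+m-1}$ are necessarily all collapsed in $W$ (otherwise some $(\kappa^{+k})^V$ with $k \le m$ would be a regular cardinal in $W$, contradicting $\cf((\kappa^{+k})^V) = \omega$). The remedy I would pursue is a single PCF-theoretic analysis carried out entirely in $W$: using $\cf((\kappa^{+k})^V) = \omega$ for every $k \le m$, fix cofinal $\omega$-sequences threading all $m + 1$ levels and show, by Shelah's PCF machinery applied in $W$, that there is a scale of length $(\kappa^{+m+1})^V$ in a suitable reduced product over these sequences that \emph{captures} the characteristic functions of club-many submodels --- that is, $M \in \mathcal{E}_\alpha$ forces $\mathrm{ch}(M)$ to be, coordinatewise and coherently, eventually dominated by the $\alpha$-th function of the scale. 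The sequence $\langle x_i\rangle$ is then read off along a cofinal branch through this scale: $M_i$ is assembled from the length-$n_i$ initial segments of the branch, where $\langle n_i \mid i < \omega\rangle$ is increasing and chosen so that the relevant point of the scale is ``good'' --- goodness being available precisely because every $\cf(\rho_i)^W$ is uncountable and $\kappa > \omega_1$ in $W$ --- and goodness is exactly what makes the assembled object a genuine elementary submodel lying in $\mathcal{E}_\alpha$ for a cofinal set of indices $\alpha$. Establishing the existence of such a capturing scale and verifying goodness of the branch points is where essentially all the work lies; when $m = 0$ it reduces to the length-$(\kappa^+)^V$ scale underlying Theorem~\ref{dzamonja_shelah_theorem}.
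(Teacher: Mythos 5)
There is a genuine gap: everything after your first step is a plan rather than a proof, and the plan as stated would not go through. Your opening reduction (clubs of elementary submodels, characteristic functions, and an application of Theorem \ref{dzamonja_shelah_theorem} with $(\kappa^{+m})^V$ in the role of $\kappa$ to extract the top suprema) is fine, but it only controls one coordinate, and you yourself observe that the coordinatewise descent does not close up. The substitute you propose --- a scale of length $(\kappa^{+m+1})^V$ constructed \emph{entirely in $W$} that ``captures'' the characteristic functions of club-many submodels, with ``goodness'' of branch points assembling genuine elementary submodels --- is exactly the part where the theorem lives, and it is doubtful as formulated. The clubs $\mathcal{D}_\alpha$ live in $V$, and the mechanism that forces the guessed points \emph{into} them is a domination argument against $V$-definable ``round up into the club'' functions (in the paper's proof, functions of the form $x \mapsto \min(D_{\alpha,\sigma} \setminus f(x))$). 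A scale built in $W$ need not dominate such $V$-functions, so capturing plus goodness gives you at best eventual domination of characteristic functions, which is far weaker than membership of the limit tuple in each $\mathcal{D}_\alpha$; nor does goodness of a scale point by itself ``assemble'' an elementary submodel --- one needs a $V$-side structure witnessing that a tuple of suprema is realized by an element of the club, and then a uniqueness argument (Lemma \ref{uniqueness_lemma}) to pin it down. (A side remark: your claim that $\kappa,\dots,\kappa^{+m-1}$ are ``necessarily all collapsed'' is also not right --- a preserved cardinal can simply become singular of cofinality $\omega$, as $\kappa$ does after Prikry forcing; this is incidental, but it signals that the intended bookkeeping has not been checked.)

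For comparison, the paper obtains this statement as a special case of Corollary \ref{magidor_sinapova_cor}, via Theorem \ref{fat_tree_guessing_theorem}, and the missing ingredients are precisely the ones your sketch defers: (1) Lemma \ref{fat_tree_model_lemma}, which converts each club $\mathcal{D}_\alpha$ \emph{in $V$} into a fat tree whose maximal branches are characteristic tuples of elements of $\mathcal{D}_\alpha$; (2) the construction \emph{in $V$}, for each coordinate $i$, of a $\lambda^+$-sequence of functions on $(\mathcal{P}_\kappa(\lambda))^V$ satisfying a coherence condition tying them to the trees' successor sets (requirement (4) in that proof), together with the club-coherence needed for Lemma \ref{club_guessing_lemma}; (3) in $W$, restriction to a covering directed family (Lemma \ref{second_covering_lemma}) and extraction of exact upper bounds of large cofinality, whose values give the tuples; and (4) the contradiction argument using (2) to show these tuples eventually lie in every tree, followed by Lemma \ref{uniqueness_lemma} to recover the actual club elements. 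Your proposal gestures at the same PCF flavor (eubs, uncountable cofinalities, characteristic functions), but without the $V$-side constructions in (1)--(2) the $W$-side eub analysis has nothing to push against, so the argument as proposed does not yield the theorem.
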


Let us now state the main result of this paper, which generalizes Theorems 
\ref{dzamonja_shelah_theorem} and \ref{magidor_sinapova_theorem}. We first need
the notion of a \emph{fat tree}. Such trees are used
often in the setting of Prikry-type forcing in the proof of a strong form of the
Prikry lemma and a characterization of genericity.  For a tree $T$ of sequences
and an element $\sigma$ of $T$, we write $\mathrm{succ}_T(\sigma)$ for $\{\alpha \mid
\sigma ^\frown \langle \alpha \rangle \in T\}$.

\begin{definition}
  Suppose that $m < \omega$, $\kappa$ is a regular cardinal, and, for $i \leq m$, $\lambda_i \geq \kappa$ is a regular cardinal. 
  Then $T \subseteq \bigcup_{k \leq m+1} \prod_{i < k} \lambda_i$ is a \emph{fat tree of type $(\kappa, \langle \lambda_0, \ldots, 
  \lambda_m \rangle)$} if:
  \begin{enumerate}
    \item for all $\sigma \in T$ and $i < \lh(\sigma), ~ \sigma \restriction i \in T$;
    \item for all $\sigma \in T$, if $\lh(\sigma) \leq m$, then $\mathrm{succ}_T(\sigma)$ is $(<\kappa)$-club in $\lambda_{\lh(\sigma)}$.
  \end{enumerate}
\end{definition}

\begin{remark}
  In the setting of Prikry-type forcing, the definition of a fat tree requires
  that $\mathrm{succ}_T(\sigma)$ is measure-one for some relevant measure
  (cf. \cite[Definition 5.16]{gitikhandbook}). In the abstract setting, in which 
  the existence of such measures is not assumed, the $(<\kappa)$-club filter seems 
  to be the correct analogue of the measure.
\end{remark}

\begin{theorem} \label{fat_tree_guessing_theorem}
  Suppose that:
  \begin{enumerate}
    \item $V$ is an inner model of $W$;
    \item in $V$, $\kappa < \lambda$ are cardinals, with $\kappa$ regular;
    \item $(\lambda^+)^V$ remains a cardinal in $W$;
    \item in $W$, there is a $\subseteq$-directed subset $Y$ of $(\mathcal{P}_\kappa(\lambda))^V$ 
      such that $\bigcup Y = \lambda$ and $|Y|^{+3} < (\lambda^+)^V$;
    \item $m < \omega$ and, in $V$, $\langle \lambda_i \mid i \leq m \rangle$ is a sequence of 
      regular cardinals from the interval $[\kappa, \lambda]$ and $\langle T(\alpha) \mid \alpha < \lambda^+ \rangle$ 
      is a sequence of fat trees of type $(\kappa, \langle \lambda_0, \ldots, \lambda_m \rangle)$.
  \end{enumerate}
  Then, in $W$, there are ordinals $\langle \delta_{i,y} \mid i \leq m, ~ y \in Y \rangle$ such that:
  \begin{enumerate}
    \item[(i)] for all $\alpha < (\lambda^+)^V$, there is $y \in Y$ such that, for all $z \in Y / y 
      := \{z \in Y \mid y \subseteq z\}$, $\langle \delta_{i,z} \mid i \leq m \rangle \in T_\alpha$;
    \item[(ii)] for all regular cardinals $\nu$ such that $\nu^{+2} < (\lambda^+)^V$, there is $y \in Y$ such that, 
      for all $z \in Y / y$ and all $i \leq m$, $\cf(\delta_{i,y}) > \nu$.
  \end{enumerate}
\end{theorem}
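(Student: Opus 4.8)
The plan is to embed the fat trees into a PCF-theoretic scale living in the outer model $W$, to define the ordinals $\delta_{i,y}$ as values of that scale read off along the directed set $Y$, and then to show that every $T_\alpha$ is ``entered cofinally'' by the scale --- which, after a reflection argument, yields clause (i). I expect to handle all $m+1$ levels of the fat trees simultaneously, since the successor-set at a node depends on the coordinates chosen at the shallower levels and a naive induction on $m$ would not respect the tree structure. As a first step, working in $V$, write $\chi = |Y|$; for regular $\tau \in [\kappa,\lambda]$ and $y \in Y$ put $p_\tau(y) = \sup(y \cap \tau)$, so $p_\tau(y) < \tau$, $\cf(p_\tau(y)) < \kappa$, and, as $\bigcup Y = \lambda$, the $p_\tau(y)$ are $\subseteq$-increasingly cofinal in $\tau$ along $Y$; in particular $\cf^W(\tau) \le \chi$ for every such $\tau$. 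The one combinatorial fact about fat trees used repeatedly is: if $C \subseteq \tau$ is $(<\kappa)$-club and $\gamma < \tau$ is a limit point of $C$ with $\cf(\gamma) < \kappa$, then $\gamma \in C$ (closure of $C$ under suprema of sequences of length $<\kappa$); membership of a \emph{high}-cofinality point in a $(<\kappa)$-club is genuine information, and it is the ``exact'' points of the scale that must supply it, which is what ties clause (ii) to clause (i).

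\textbf{The scale and the characteristic functions.} Working in $W$: using $\cf^W(\lambda_i) \le \chi$ for each $i \le m$ and the hypothesis that $(\lambda^+)^V$ remains a cardinal in $W$, I would fix (via the PCF structure theory, passing to cofinal sets of regular cardinals below each $\lambda_i$ and taking the corresponding bounding ideal) a $<$-increasing cofinal sequence $\langle g_\xi \mid \xi < (\lambda^+)^V \rangle$ of functions whose values track the approach to the $\lambda_i$; for clause (ii) one takes these index sets of regular cardinals to lie above any prescribed bound. Each $T_\alpha$ then yields a characteristic function $f_\alpha$ in the same product, recording at each coordinate a bound below which the successor-sets of $T_\alpha$ (at all nodes threaded so far) remain cofinal. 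The content to extract is that $\{\xi < (\lambda^+)^V : g_\xi \text{ eventually exceeds } f_\alpha \text{ coordinatewise}\}$ contains a club of $(\lambda^+)^V$, and that on this club the value $g_\xi$ can be read, via the fact above, as a branch of $T_\alpha$.

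\textbf{The PCF limit and reflection (the crux).} Take a continuous increasing chain $\langle M_j \mid j < \chi^{+3} \rangle$ of elementary submodels of $H_\Theta^W$, each of size $< \chi^{+3}$, with $V, Y, \langle T_\alpha \rangle, \langle \lambda_i \rangle$ and the scale in $M_0$ and $\chi + 1 \subseteq M_0$, and set $\alpha_j = \sup(M_j \cap (\lambda^+)^V)$. Hypothesis (4), $\chi^{+3} < (\lambda^+)^V$, is precisely what makes this chain available with the $\alpha_j$ strictly increasing below $(\lambda^+)^V$ and --- since $(\lambda^+)^V$ is a cardinal in $W$ --- with the resulting sequence $\langle g_{\alpha_j} \mid j < \chi^{+3} \rangle$ long enough (length and cofinality well above $\chi^+$) that an exact-upper-bound argument in the style of Shelah's trichotomy produces a \emph{true} exact upper bound over the index set rather than merely a least upper bound; the extra cardinal of room is spent excluding the pathological alternatives. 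Choosing $\xi_y \in (\lambda^+)^V$ cofinally and coherently along the directed order on $Y$, set $\delta_{i,y}$ to be the $i$-th block of $g_{\xi_y}$ (the supremum of the appropriate interval of its coordinates). Given $\alpha$: by elementarity $T_\alpha \in M_j$ for some $j$, the club of coordinates entering $T_\alpha$ reflects into $M_j$, and the exact-upper-bound analysis yields $y_\alpha \in Y$ with $\langle \delta_{i,z} \mid i \le m \rangle \in T_\alpha$ for all $z \in Y / y_\alpha$ --- the key fact above converting ``$g_{\xi_z}$ enters $T_\alpha$'' into genuine membership of each $\delta_{i,z}$ in the relevant successor-set. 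This is clause (i).

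\textbf{Clause (ii) and the main obstacle.} A true exact upper bound of a $<_J$-increasing sequence of cofinality $\vartheta$ has all of its values of cofinality $\vartheta$, so taking the chain of the previous step of length just above $\nu^{+2}$ --- permitted precisely because $\nu^{+2} < (\lambda^+)^V$ --- forces $\vartheta > \nu$ and hence $\cf(\delta_{i,z}) > \nu$ for all $z$ past the resulting threshold; alternatively one arranges at the outset that the scale's product index sets consist of regular cardinals above $\sup\{\nu^+ : \nu \text{ regular},\ \nu^{+2} < (\lambda^+)^V\}$, and a diagonalization over the $<\chi^+$-many relevant $\nu$ covers all of them at once. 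The hard part is the previous step: organizing the $(\lambda^+)^V$-many characteristic functions into a single $<_J$-increasing system over $Y$ to which the exact-upper-bound machinery genuinely applies, and choosing the ideal $J$ and the reflecting club of $\alpha$'s so that stabilizing the scale at one level does not destroy the $(<\kappa)$-club structure governing the next --- together with the cardinal-arithmetic bookkeeping (accounting for the ``$+3$'' in hypothesis (4)) that rules out the degenerate cases of the trichotomy. The projections $p_\tau$, the fat-tree formalism, and the reflection step are scaffolding; the weight of the proof lies in this PCF analysis.
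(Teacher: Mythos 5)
There is a genuine gap, and it sits exactly where you place ``the weight of the proof.'' Your mechanism for clause (i) is that the scale functions eventually dominate a ``characteristic function'' $f_\alpha$ of $T(\alpha)$ on a club of indices, and that the closure fact about $(<\kappa)$-clubs then ``converts entering into membership.'' But exceeding a bound is not membership in a club: since the values you produce have $V$-cofinality $<\kappa$, what you need is that each $\delta_{i,z}$ is a \emph{limit point} of the relevant successor set $\mathrm{succ}_{T(\alpha)}(\sigma)$, and nothing in your construction supplies cofinal approach to $\delta_{i,z}$ from \emph{inside} these $V$-clubs. A scale built in $W$ from PCF structure theory over cofinal sets of regular cardinals below the $\lambda_i$ has no tie to the trees at all. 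The paper's proof creates that tie in $V$, before passing to $W$: for each $i\leq m$ it constructs a sequence $\langle f_{i,\beta}\mid\beta<\lambda^+\rangle$ of functions on $(\mathcal{P}_\kappa(\lambda))^V$ which, besides being suitably increasing, satisfies the diagonalization property that for all $\alpha,\beta<\lambda^+$ there is $\gamma$ such that for every $x$ and every node $\sigma$ of $T(\alpha)$ of length $i$ with range contained in $x$, the round-up of $f_{i,\beta}(x)$ into $\mathrm{succ}_{T(\alpha)}(\sigma)$ is below $f_{i,\gamma}(x)$ (possible in $V$ because for fixed $x$ there are fewer than $\kappa$ such nodes and $\lambda_i\geq\kappa$ is regular). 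Restricting to $Y$ gives a $<_I$-increasing sequence of length $(\lambda^+)^V$ over the non-cofinal ideal on $Y$; Lemma \ref{club_guessing_lemma} (this is where $|Y|^{+3}<(\lambda^+)^V$ is spent) gives an eub $h_i$, and $\delta_{i,y}=h_i(y)$. If $\delta_{k,z}\notin \mathrm{succ}_{T(\alpha)}(\sigma_z)$ on a cofinal set, then $\bar h(z)=\sup(\mathrm{succ}_{T(\alpha)}(\sigma_z)\cap\delta_{k,z})<\delta_{k,z}$ gives $\bar h<h_k$, hence $\bar h<_I g_{k,\beta}$ for some $\beta$, and the diagonalization property produces $\gamma$ with $h_k<g_{k,\gamma}$ cofinally often, contradicting that $h_k$ is an upper bound. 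Your closing paragraph concedes that organizing the $f_\alpha$'s into a system to which the eub machinery genuinely applies is unresolved; that unresolved step is precisely this $V$-side construction, and without it the reflection/elementary-submodel argument cannot yield clause (i).

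Two further points. First, the successor set at level $i$ depends on the coordinates already chosen at levels $<i$; the ``naive induction on $m$'' you reject is in fact what the paper does, and the dependence is handled by absorbing the chosen coordinates into the index set, replacing $y$ by $y_i=y\cup\{\delta_{j,y}\mid j<i\}$ so that $\mathrm{range}(\sigma_z)$ lies in the argument of the functions and the diagonalization property applies; your ``simultaneous'' treatment leaves this dependence unaddressed. Second, your assertion that a true eub of a sequence of cofinality $\vartheta$ has all values of cofinality $\vartheta$ is false: Lemma \ref{small_cf_lemma} only excludes, modulo the ideal, cofinalities above $\vartheta$ and those in the interval $(|X|,\vartheta)$, so small cofinalities can occur on positive sets. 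To get pointwise cofinality above a prescribed $\nu$ one needs the hypothesis of Lemma \ref{club_guessing_lemma}, i.e.\ the ``flat along a club at points of small cofinality'' condition that the paper builds into the $V$-construction (its requirement (3)); this is absent from your outline, so clause (ii) as you argue it also rests on an unproved claim, though it is repairable by that route.
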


Our method of proof is substantially different
from that of the previous results, with a number of advantages. First, we directly 
obtain pseudo-Prikry sequences as in Theorem \ref{dzamonja_shelah_theorem} 
that simultaneously witness the ``moreover'' clause for \emph{every} $\eta < |\kappa|.$\footnote{We thank 
  the referee for pointing out, that, under the mild additional assumption that $2^\theta < \kappa^+$ in $W$, 
  such sequences can also be obtained from Theorem \ref{dzamonja_shelah_theorem}.} Second, we are
able to obtain pseudo-Prikry sequences as in Theorem
\ref{magidor_sinapova_theorem}, but in which the cardinal $\kappa$ and some
cardinals above change their cofinalities to some uncountable cardinal (see
Corollaries \ref{ms_cor} and \ref{magidor_sinapova_cor}).  Third,
we can use these techniques to obtain results about models $V \subseteq W$ such
that $\kappa$ is a regular cardinal in $V$ which is singularized in $W$ and the
least $V$-cardinal above $\kappa$ that is preserved in $W$ is, in $V$, the
successor of a singular cardinal (see Theorems \ref{diag_thm1} and
\ref{diagonal_guessing_theorem}). In this situation, we obtain results about
the existence of certain \emph{diagonal pseudo-Prikry sequences}, 
which approximate the behavior of generic sequences for 
diagonal Prikry-type forcings and certain extender-based forcings.
This allows us to address \cite[Question 1]{magidor_sinapova}, which asks whether Theorem 
\ref{magidor_sinapova_theorem} can be extended by replacing $\kappa^{+m}$ with 
a singular cardinal greater than $\kappa$. Gitik, in \cite{gitik_witnessing_seq}, 
shows that there can be no straightforward generalization of Theorem 
\ref{magidor_sinapova_theorem} to singular cardinals, but our results 
indicate that there is a positive generalization if one replaces psuedo-Prikry 
sequences with diagonal pseudo-Prikry sequences. 

The proofs of our results use PCF-theoretic ideas. 
In particular, given models $V \subseteq W$ of ZFC, we will construct 
combinatorial objects in $V$ and then use PCF-theoretic techniques to 
analyze the properties of these objects in $W$. This analysis will yield 
the pseudo-Prikry sequences that we are seeking.

The structure of the paper is as follows. In Section \ref{prelim_section}, we
give some PCF-theoretic background. In Section \ref{directed_section}, we
present some technical lemmas about the existence of certain nice directed sets in outer models in which
cofinalities have been changed. In Section \ref{guessing_section}, we
describe a connection between fat trees and clubs in
$\mathcal{P}_\kappa(\kappa^{+m})$. We then prove Theorem \ref{fat_tree_guessing_theorem}
and establish some of its notable corollaries. In Section
\ref{diagonal_section}, we use our techniques to obtain results about the
existence of diagonal pseudo-Prikry sequences.

Our notation is, for the most part, standard. If $\lambda < \beta$, where 
$\lambda$ is a cardinal and $\beta$ is an ordinal, then $S^\beta_\lambda 
:= \{\alpha < \beta \mid \cf(\alpha) = \lambda\}$ and 
$S^\beta_{<\lambda} := \{\alpha < \beta \mid \cf(\alpha) < \lambda\}$. A set of ordinals 
$x$ is \emph{$\lambda$-closed} if, whenever $\alpha < \sup(x)$, $\cf(\alpha) = \lambda$, and 
$\sup(x \cap \alpha) = \alpha$, we have $\alpha \in x$. Given a cardinal $\kappa$, 
$x$ is \emph{$(<\kappa)$-closed} if it is $\lambda$-closed for all regular 
$\lambda < \kappa$. If $\beta$ is an ordinal, then we say $x$ is \emph{$\lambda$-club} 
(resp. \emph{$(<\kappa)$-club}) in $\beta$ if it is $\lambda$-closed 
(resp. $(<\kappa)$-closed) and unbounded in $\beta$. If $\sigma$ is a sequence, then 
$\lh(\sigma)$ denotes the length of $\sigma$. If $\kappa < \lambda$ are cardinals, 
$x \in \mathcal{P}_\kappa(\lambda)$, and $A \subseteq \mathcal{P}_\kappa(\lambda)$, 
then $A/x := \{y \in A \mid x \subseteq y\}$.

\section{PCF-theoretic background} \label{prelim_section}

Let $X$ be a set and $I$ be an ideal on $X$.  We write $I^+$ for $\{ A \subseteq
X \mid A \notin I\}$ and $I^*$ for $\{A \subseteq X \mid X \setminus A \in I
\}$.

\begin{definition}
  For $f,g \in {^X}\mathrm{On}$, we define:
  \begin{itemize}
    \item $f <_I g$ if $\{x \in X \mid f(x) \geq g(x)\} \in I$;
    \item $f \leq_I g$ if $\{x \in X \mid f(x) > g(x)\} \in I$;
    \item $f =_I g$ if $\{x \in X \mid f(x) \neq g(x)\} \in I$.
  \end{itemize}
\end{definition}

Unless $I$ is a prime ideal, it is not necessarily the case that 
the conjunction of $f \leq_I g$ and $f \not<_I g$ is equivalent to $f =_I g$.
If $\mu$ is a regular cardinal and $f,g \in {^\mu}\mathrm{On}$, then 
$f <^* g$ denotes $f <_{I_{bd}} g$, where $I_{bd}$ is the ideal of bounded subsets of $\mu$. 
$f \leq^* g$, etc.\ are defined similarly. We write $f < g$ if $f(x) < g(x)$ 
for \emph{all} $x \in X$.

\begin{definition}
  Suppose that $\vec{f} = \langle f_\alpha \mid \alpha < \lambda \rangle$ 
  is a $<_I$-increasing sequence of functions in ${^X}\mathrm{On}$. Suppose also that $g \in {^X}\mathrm{On}$.
  \begin{itemize}
    \item $g$ is a \emph{$<_I$-upper bound} for $\vec{f}$ if, for all $\alpha < \lambda$, $f_\alpha <_I g$.
    \item $g$ is a \emph{$<_I$-exact upper bound} (eub) for $\vec{f}$ if it is a $<_I$-upper bound for $\vec{f}$ and, 
      whenever $h <_I g$, there is $\alpha < \lambda$ such that $h <_I f_\alpha$.
  \end{itemize}
\end{definition}

  If the ideal $I$ is clear from context, then `$<_I$' will be dropped from `$<_I$-upper bound,' etc. 
  It is immediate that, if $g$ and $h$ are both 
  $<_I$-eubs for the same sequence of functions, then $\{x \in X \mid g(x) \neq h(x)\} \in I$.

We now present a standard lemma from \cite{magidor_shelah} regarding cofinalities of exact upper bounds.
The result is stated in \cite{magidor_shelah} in the case that $I$ is the ideal of bounded subsets of a 
regular cardinal, but the proof goes through in the general case.

\begin{lemma}[Magidor-Shelah, \cite{magidor_shelah}, Lemmas 7 and 8] \label{small_cf_lemma}
  Suppose that:
  \begin{itemize}
    \item $\lambda$ is a regular cardinal;
    \item $\vec{f} = \langle f_\alpha \mid \alpha < \lambda \rangle$ is a $<_I$-increasing sequence 
      from ${^X}\mathrm{On}$;
    \item $g$ is an eub for $\vec{f}$.
  \end{itemize}
  Then: 
  \begin{enumerate}
    \item $\{x \in X \mid \cf(g(x)) > \lambda\} \in I$;
    \item if $\delta$ is a regular cardinal and $|X| < \delta < \lambda$, then 
      $\{x \in X \mid \cf(g(x)) = \delta\} \in I$.
  \end{enumerate}
\end{lemma}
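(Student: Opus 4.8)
The plan is to prove the two parts of Lemma~\ref{small_cf_lemma} separately, in each case arguing by contradiction: assume the relevant set is $I$-positive and manufacture from it a function $h <_I g$ that is not dominated by any $f_\alpha$, contradicting the assumption that $g$ is an eub.

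\textbf{Part (1).} Suppose toward a contradiction that $A := \{x \in X \mid \cf(g(x)) > \lambda\} \in I^+$. For each $x \in A$, fix a strictly increasing continuous cofinal sequence $\langle c^x_\xi \mid \xi < \cf(g(x)) \rangle$ in $g(x)$; since $\cf(g(x)) > \lambda$, the initial segment $\langle c^x_\xi \mid \xi < \lambda \rangle$ is well-defined and $\sup_{\xi < \lambda} c^x_\xi < g(x)$. Now I would use the $<_I$-increasing sequence $\vec f$ to ``chase'' along these cofinal sequences: recursively build an increasing sequence $\langle \alpha_\xi \mid \xi < \lambda \rangle$ of ordinals below $\lambda$ together with a witness, as follows. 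Having chosen $\alpha_\xi$, define $h_\xi \in {}^X\mathrm{On}$ by $h_\xi(x) = c^x_\xi$ for $x \in A$ and $h_\xi(x) = 0$ (say) otherwise; since $h_\xi <_I g$ (as $c^x_\xi < g(x)$ on $A$, and $g$ is an upper bound so $g(x) > 0 = h_\xi(x)$ off $A$ modulo $I$ — more carefully, one only needs $h_\xi <_I g$, which holds because $\{x : h_\xi(x) \geq g(x)\} \subseteq X \setminus A$ need not be in $I$; so instead set $h_\xi(x) = g(x)$ off $A$, giving $h_\xi <_I g$ cleanly since the bad set is empty). By the eub property pick $\alpha_{\xi+1} > \alpha_\xi$ with $h_\xi <_I f_{\alpha_{\xi+1}}$, and take sups at limits. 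Since $\lambda$ is regular, $\alpha^* := \sup_{\xi < \lambda} \alpha_\xi < \lambda$. Now consider $f_{\alpha^*}$. For each $\xi < \lambda$ we have $h_\xi <_I f_{\alpha_{\xi+1}} \leq_I f_{\alpha^*}$, i.e. $\{x : c^x_\xi \geq f_{\alpha^*}(x)\} \cap A \in I$. Define $h(x) = \sup_{\xi < \lambda} c^x_\xi$ for $x \in A$ and $h(x) = g(x)$ off $A$. Then $h <_I g$ (strictly below $g$ on $A$ since $\cf(g(x)) > \lambda$, equal off $A$). But $h \not<_I f_{\alpha^*}$: I need to check that on a positive set $h(x) \geq f_{\alpha^*}(x)$ — hmm, this requires a little care. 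Actually the cleanest route is the standard one: having built $\langle \alpha_\xi \rangle$, observe by the eub property applied once more to some $h$ built from the $c^x_\xi$ that there must be $\beta < \lambda$ with $h <_I f_\beta$, and derive a contradiction with the fact that $f_\beta(x) \leq h(x)$ on $A$ for a tail of $\xi$ — i.e. choose $\xi$ with $\alpha_\xi > \beta$, so $f_\beta <_I f_{\alpha_\xi}$, while $h_\xi(x) = c^x_\xi < h(x)$... I would organize this so that the $\alpha_\xi$ are chosen to also dominate a fixed cofinal-in-$\lambda$ enumeration, making the diagonalization go through. The upshot: positivity of $A$ contradicts exactness.

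\textbf{Part (2).} Now suppose $\delta$ is regular, $|X| < \delta < \lambda$, and $B := \{x \in X \mid \cf(g(x)) = \delta\} \in I^+$. For each $x \in B$ fix an increasing cofinal sequence $\langle d^x_\eta \mid \eta < \delta \rangle$ in $g(x)$. For each $\alpha < \lambda$, since $f_\alpha <_I g$, for each $x \in B$ with $f_\alpha(x) < g(x)$ there is a least $\eta^x_\alpha < \delta$ with $f_\alpha(x) < d^x_{\eta^x_\alpha}$; this defines (modulo $I$) a function $x \mapsto \eta^x_\alpha$ from $B$ into $\delta$, i.e. an element of ${}^B\delta$. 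As $\alpha$ increases these are $\leq$-increasing mod $I$ in a suitable sense. The key point is that $|{}^B\delta| $ need not be small, but $\delta$ is regular and $|B| \leq |X| < \delta$, so for each $\alpha$ the ordinal $\sup_{x \in B} \eta^x_\alpha < \delta$. Thus $\alpha \mapsto \sup_{x \in B}\eta^x_\alpha$ is a map $\lambda \to \delta$, and since $\lambda > \delta$ is regular it is not injective — but more usefully, I would argue: build $h \in {}^X \mathrm{On}$ by, for $x \in B$, letting $h(x) = d^x_{\eta(x)}$ where $\eta(x) = \sup\{\eta^x_\alpha \mid \alpha < \lambda\}$ — wait, that sup could be $\delta$. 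Instead: since $\vec f$ has length $\lambda > \delta$, and for each $x \in B$ the sequence $\langle \eta^x_\alpha \mid \alpha < \lambda\rangle$ is (roughly) nondecreasing into $\delta$, it is eventually constant with some value $\eta^x_\infty < \delta$ — but eventual in $\alpha$ depending on $x$, and there are $\leq |X| < \delta \leq \lambda$ many $x$, so there is a single $\alpha^* < \lambda$ past which all of them have stabilized (using regularity of $\lambda$ and $|X| < \lambda$). Then define $h(x) = d^x_{\eta^x_\infty} $ on $B$, $h(x) = g(x)$ off $B$; we get $h <_I g$ since $d^x_{\eta^x_\infty} < g(x)$. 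Exactness gives $\beta < \lambda$ with $h <_I f_\beta$; taking $\beta \geq \alpha^*$ WLOG (replacing $\beta$ by $\max(\beta,\alpha^*)$ and using $f_{\alpha^*} \leq_I f_\beta$... careful, need $f_{\alpha^*} <_I f_\beta$ which may fail if $\beta = \alpha^*$, so take $\beta > \alpha^*$), we have for $I$-almost all $x \in B$: $f_\beta(x) < d^x_{\eta^x_\infty}$, hence $\eta^x_\beta \leq \eta^x_\infty$; but $\beta > \alpha^*$ and $\beta < \lambda$, and the definition of stabilization at $\alpha^*$ combined with $f_{\alpha^*} <_I f_\beta$ forces $f_\beta(x) \geq d^x_{\eta^x_\infty - 1}$-ish... the contradiction is that $f_\beta$ both must and must not reach the $\eta^x_\infty$-th level. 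I would tidy this into: past stabilization, $f_\alpha(x) \geq d^x_{\gamma}$ for all $\gamma < \eta^x_\infty$, for $I$-a.e. $x$; combined with $h <_I f_\beta$, i.e. $f_\beta(x) > d^x_{\eta^x_\infty}$... no — rather $h(x) = d^x_{\eta^x_\infty} \leq f_\beta(x)$ is consistent; the real contradiction needs $h$ chosen slightly larger, e.g. $h(x) = d^x_{\eta^x_\infty+1}$ or $h(x) = g(x)$ works if $\eta^x_\infty + 1 < \delta$. So set $h(x) = d^x_{\eta^x_\infty + 1}$ on $B$; then $h <_I g$ still, exactness gives $\beta$, and $h <_I f_\beta$ says $f_\beta(x) > d^x_{\eta^x_\infty+1} > d^x_{\eta^x_\infty}$ for $I$-a.e.\ $x \in B$, whence $\eta^x_\beta \geq \eta^x_\infty + 1 > \eta^x_\infty$, contradicting that $\langle \eta^x_\alpha \rangle$ stabilized at value $\eta^x_\infty$ by stage $\alpha^* < \beta$.

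\textbf{Main obstacle.} The genuinely delicate point in both parts is bookkeeping with the ideal $I$: because $I$ need not be prime, statements like ``$f_\alpha(x) < g(x)$ for all $x$'' only hold modulo $I$, and one must repeatedly intersect countably-or-$\lambda$-many $I$-small sets, which is fine since we only ever take $<\lambda$-many (indeed finitely or $|X|$-many in the relevant steps) such sets, but one has to phrase the recursion so that the diagonal function $h$ genuinely fails to be bounded by $f_{\alpha^*}$ or $f_\beta$ on an $I$-positive set, rather than merely ``off a set not known to be in $I$.'' The trick, used above, is always to define the auxiliary functions to equal $g(x)$ (or dominate $f$ cleanly) outside the positive set $A$ or $B$ in question, so that the ``bad set'' for the relation $h <_I g$ is literally empty, and then all the action is confined to $A$ (resp.\ $B$), where the cofinality hypothesis does the work. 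The regularity of $\lambda$ (to keep $\alpha^*,\alpha_\xi$ below $\lambda$) and the inequality $|X| < \delta$ (to take $\sup_{x\in B}$ of the catch-up ordinals inside $\delta$, and to find a uniform stabilization stage) are exactly what make the respective diagonalizations close off.
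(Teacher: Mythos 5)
First, a point of reference: the paper does not prove this lemma at all — it cites Magidor--Shelah and only remarks that the argument survives passing from the bounded ideal to a general ideal $I$ — so your attempt can only be measured against the standard argument, and there it has real problems. The central ``bookkeeping trick'' you lean on is backwards. With the paper's definition, $h <_I g$ means $\{x \mid h(x) \geq g(x)\} \in I$; if you set $h(x) = g(x)$ off the positive set $A$ (resp.\ $B$), then the bad set \emph{contains} $X \setminus A$, and $A \in I^+$ does not give $X \setminus A \in I$ (that would need $A \in I^*$). So in both parts the step ``$h <_I g$, hence by exactness $h <_I f_\beta$'' is not justified as written. The correct move is the opposite one: set $h(x) = 0$ off the positive set and note that $\{x \mid g(x) = 0\} \subseteq \{x \mid g(x) \leq f_0(x)\} \in I$; then all the action is confined to the positive set, where one gets a contradiction on an $I$-positive subset after removing finitely many $I$-small sets.

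Beyond that, each part has a substantive gap. In Part (1) your recursion chooses $\alpha_{\xi+1}$ so that $f_{\alpha_{\xi+1}}$ dominates the $\xi$-th slice of the cofinal sequences, which is the wrong direction: with $h(x) = \sup_{\xi<\lambda} c^x_\xi$ there is no reason exactness should fail — some $f_\beta$ may perfectly well dominate $h$ modulo $I$ — and indeed you never close the contradiction, only gesture at reorganizing it. The clean argument diagonalizes against the $f_\alpha$'s themselves: on $A$ put $h(x) = \sup_{\alpha<\lambda} f'_\alpha(x)$, where $f'_\alpha(x) = f_\alpha(x)$ if $f_\alpha(x) < g(x)$ and $0$ otherwise; then $\cf(g(x)) > \lambda$ gives $h(x) < g(x)$, and if $h <_I f_\beta$, then on the $I$-positive set $A \cap \{f_\beta < g\} \cap \{h < f_\beta\}$ one has $f_\beta(x) = f'_\beta(x) \leq h(x) < f_\beta(x)$ — no transfinite chasing needed. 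In Part (2) the decisive step, that for each fixed $x$ the sequence $\langle \eta^x_\alpha \mid \alpha < \lambda\rangle$ is ``roughly nondecreasing'' and hence stabilizes at some $\eta^x_\infty$ below a uniform stage $\alpha^*$, is unjustified: $<_I$-increasing gives monotonicity only modulo $I$ for each pair $\alpha < \alpha'$, and says nothing pointwise at a fixed $x$, where the values $f_\alpha(x)$ may oscillate arbitrarily; so $\eta^x_\infty$ and $\alpha^*$ need not exist. The hypotheses are used differently in the standard proof: $|X| < \delta$ yields $t_\alpha := \sup_{x \in B}\eta^x_\alpha < \delta$ for each $\alpha$, regularity of $\lambda > \delta$ yields a single $t^* < \delta$ with $t_\alpha \leq t^*$ for unboundedly many $\alpha$, and one diagonalizes with the single function $h(x) = d^x_{t^*}$ on $B$ (and $0$ off $B$): if $h <_I f_\beta$, pick $\alpha > \beta$ with $t_\alpha \leq t^*$ and contradict $f_\beta <_I f_\alpha$ on an $I$-positive subset of $B$, since there $f_\alpha(x) < d^x_{t^*} = h(x) < f_\beta(x)$. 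That is essentially the route you started down and then abandoned in favor of the pointwise stabilization claim.
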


\begin{definition}
  Suppose that $\mathcal{F}$ is a collection of functions in ${^X}\mathrm{On}$. Then $\sup(\mathcal{F})$ 
  denotes the function $g \in {^X}\mathrm{On}$ given by letting $g(x) = \sup\{f(x) \mid f \in \mathcal{F}\}$ 
  for all $x \in X$.
\end{definition}

The following lemma concerning the existence of exact upper bounds is essentially due to Shelah and 
will play a key role in our results. For a proof, see 
\cite[Theorem 2.15 and Lemma 2.19]{abraham_magidor}.

\begin{lemma} \label{club_guessing_lemma}
  Suppose $\mu$ and $\lambda$ are regular cardinals such that $|X| < \mu < \mu^{++} < \lambda$, 
  and suppose that $\vec{f} = \langle f_\alpha \mid \alpha < \lambda \rangle$ is a $<_I$-increasing sequence of 
  functions from ${^X}\mathrm{On}$ such that, for every $\gamma \in S^\lambda_{\mu^{++}}$, 
  there is a club $D_\gamma \subseteq \gamma$ such that $\sup\{f_\alpha \mid \alpha \in D_\gamma\} <_I f_\gamma$.
  Then there is an eub $g$ for $\vec{f}$ such that, for all $x \in X$, $\cf(g(x)) > \mu$.
\end{lemma}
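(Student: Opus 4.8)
The statement is a theorem of Shelah, and the plan is to reproduce its standard proof, which proceeds via the \emph{Trichotomy Theorem} for $<_I$-increasing sequences (this is \cite[Theorem 2.15]{abraham_magidor}), using the club-guessing hypothesis to eliminate the pathological alternatives. First I would apply the Trichotomy Theorem to $\vec{f}$; this is legitimate since $\lambda$ is regular and $\lambda > |X|^{+}$ (indeed $\lambda > |X|^{++}$). It tells us that one of the following holds: $\vec{f}$ has a $<_I$-eub (the \emph{good} case); or there exist a set $S \in I^+$, an ordinal $\tau \leq |X|$, and functions $\langle h_\alpha \mid \alpha < \lambda \rangle$ exhibiting cofinality-$\tau$ chaos of $\vec{f}$ on $S$, in the sense that each $h_\alpha$ interpolates between $f_\alpha \restriction S$ and $f_{\alpha + 1} \restriction S$ modulo $I$ while $\cf(h_\alpha(x)) = \tau$ for every $x \in S$ (the \emph{bad} case); or a third (\emph{ugly}) alternative, which in the regime $\lambda > |X|^+$ is excluded in a similar fashion. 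The goal is therefore to rule out the bad case using the club-guessing hypothesis.

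Suppose, toward a contradiction, that $S$, $\tau$, and $\langle h_\alpha \mid \alpha < \lambda \rangle$ exhibit the bad case. I would fix $\gamma \in S^\lambda_{\mu^{++}}$ together with the club $D_\gamma \subseteq \gamma$ supplied by the hypothesis, and set $b := \sup\{f_\alpha \mid \alpha \in D_\gamma\}$, so that $b <_I f_\gamma$. Then $b$ is a $<_I$-upper bound for $\vec{f} \restriction \gamma$ which, on a set in $I^*$, lies strictly below $f_\gamma$. Since $\tau \leq |X| < \mu < \mu^{++} = \cf(\gamma) = \otp(D_\gamma)$, this ``ceiling'' is much too low, relative to the order type of $D_\gamma$, for cofinality-$\tau$ chaos beneath $f_\gamma \restriction S$ to persist: along $D_\gamma$ the values $\langle f_\alpha(x) \mid \alpha \in D_\gamma \rangle$ would have to climb to $b(x) < f_\gamma(x)$ through a set of order type $\mu^{++}$, while, via the $h_\alpha$'s, for $x$ in an $I$-positive subset of $S$ the index of $f_\alpha(x)$ within a fixed cofinal-in-$b(x)$ sequence of length $< \mu^{++}$ would have to increase without bound, and these two demands cannot be met together. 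Hence $\vec{f}$ is in the good case and has an eub $g$. Making these incompatible demands precise, and handling the various ``modulo $I$'' caveats, is the main obstacle in the proof; it is the content of \cite[Lemma 2.19]{abraham_magidor}, and it is exactly where the gap $\mu < \mu^{++} < \lambda$ in the hypothesis is used --- one needs $\cf(\gamma) = \mu^{++}$ to be a regular cardinal well above $|X|^+$, with enough room below $\lambda$ for a Lemma~\ref{small_cf_lemma}-style count applied to the length-$\mu^{++}$ subsequence of $\vec{f}$ below $\gamma$.

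It remains to arrange that $\cf(g(x)) > \mu$ for all $x \in X$. Lemma~\ref{small_cf_lemma}(1) gives $\{x \in X \mid \cf(g(x)) > \lambda\} \in I$, and Lemma~\ref{small_cf_lemma}(2), applied with each regular cardinal $\delta$ satisfying $|X| < \delta < \lambda$, gives $\{x \in X \mid |X| < \cf(g(x)) < \lambda\} \in I$; since cofinalities are regular cardinals, modulo $I$ we have $\cf(g(x)) \leq |X|$ or $\cf(g(x)) = \lambda$. The first alternative cannot hold on a set in $I^+$: after refining such a set so that $\cf(g(x))$ equals a fixed $\tau \leq |X|$ and interpolating the $f_\alpha$'s through a fixed cofinal-in-$g(x)$ sequence of length $\tau$, one would recover exactly the data of the bad case, which we have just excluded. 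Hence $\cf(g(x)) = \lambda$ for $I$-almost every $x$. Since $\lambda > \mu^{++} > \mu$, the set $Z := \{x \in X \mid \cf(g(x)) \leq \mu\}$ lies in $I$; redefining $g(x) := \mu^{+}$ for $x \in Z$ changes $g$ only on a member of $I$, so the resulting function is still an eub for $\vec{f}$ and now satisfies $\cf(g(x)) > \mu$ for every $x \in X$, which completes the argument.
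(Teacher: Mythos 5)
Your overall route --- Shelah's Trichotomy Theorem plus elimination of the Bad/Ugly alternatives via the club-guessing hypothesis --- is exactly the route the paper intends, since the paper proves this lemma simply by citing \cite[Theorem 2.15 and Lemma 2.19]{abraham_magidor}; deferring the combinatorial core of ruling out the Bad case to Lemma 2.19 is therefore unobjectionable. The genuine problem is your last paragraph, where you try to derive the bound $\cf(g(x)) > \mu$ yourself. The step ``Lemma \ref{small_cf_lemma}(2), applied with each regular $\delta$ satisfying $|X| < \delta < \lambda$, gives $\{x \in X \mid |X| < \cf(g(x)) < \lambda\} \in I$'' is invalid: $I$ is merely an ideal, closed only under finite unions, no completeness is assumed, and up to $|X|$-many distinct cofinalities can occur, so the union of the $I$-null sets $\{x \mid \cf(g(x)) = \delta\}$ need not lie in $I$. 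Indeed the conclusion you draw, that $\cf(g(x)) = \lambda$ for $I$-almost every $x$, is false for exact upper bounds in general: take $X = \omega$, $I$ the ideal of finite sets, and $\vec{f}$ a scale of length $\lambda = \aleph_{\omega+1}$ in $\prod_{n < \omega} \aleph_{n+1}$; then $g$ with $g(n) = \aleph_{n+1}$ is an eub and $|X| < \cf(g(n)) < \lambda$ for every $n$. It is also incompatible with how Lemma \ref{club_guessing_lemma} is used in this paper: in the proofs of Theorem \ref{fat_tree_guessing_theorem} and Lemma \ref{csv_lemma} the eub's coordinates deliberately have cofinalities strictly between $\mu$ and $\lambda$ (e.g.\ $h_i(y) < \lambda_i < (\lambda^+)^V$ there), so an argument forcing $\cf(g(x)) = \lambda$ almost everywhere cannot be right.

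The lower bound $\cf(g(x)) > \mu$ is not a soft consequence of trichotomy plus Lemma \ref{small_cf_lemma}; it is part of the content of \cite[Lemma 2.19]{abraham_magidor}, and it is extracted from the club-guessing hypothesis at points of $S^\lambda_{\mu^{++}}$. Roughly: if $B = \{x \mid \cf(g(x)) \leq \mu\}$ were $I$-positive, one fixes, for $x \in B$, cofinal sets $c_x \subseteq g(x)$ of order type $\leq \mu$, projects the sequence to $\alpha \mapsto \min(c_x \setminus f_\alpha(x))$, and uses some $\gamma \in S^\lambda_{\mu^{++}}$ with its club $D_\gamma$ --- here it matters that $\cf(\gamma) = \mu^{++}$ exceeds both $|X|$ and every $\otp(c_x)$ --- to stabilize the projected sequence along $D_\gamma$ and contradict $\sup\{f_\alpha \mid \alpha \in D_\gamma\} <_I f_\gamma$ together with the eub property; that is, the same mechanism you invoke against the Bad case must be run again at the level $\mu$, and this is where the hypothesis $\mu^{++} < \lambda$ with guessing at cofinality $\mu^{++}$ (rather than merely $|X|^+$) earns its keep. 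Once $B \in I$ is established, your final patching step (redefining $g$ on $B$ to have cofinality $> \mu$) is fine. So either carry out this argument or defer the cofinality clause, like the existence clause, to \cite[Lemma 2.19]{abraham_magidor}; as written, your last paragraph does not establish it.
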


We now present one of the central notions of PCF theory.

\begin{definition}
  Suppose $\mu$ is a singular cardinal, $\cf(\mu) = \theta$, and $\langle \mu_\xi \mid \xi < \theta \rangle$ 
  is an increasing sequence of regular cardinals, cofinal in $\mu$. Suppose $\lambda > \mu$ is a regular cardinal. 
  A sequence $\langle f_\alpha \mid \alpha < \lambda \rangle$ is a \emph{scale of length $\lambda$ in $\prod_{\xi < \theta} 
  \mu_\xi$} if it is increasing and cofinal in $(\prod_{\xi < \theta} \mu_\xi, <^*)$.
\end{definition}

\begin{theorem}[Shelah, \cite{cardinal_arithmetic}] \label{scale_thm}
  Suppose that $\mu$ is a singular cardinal and $\cf(\mu) = \theta$. Then there is an increasing sequence $\langle 
  \mu_\xi \mid \xi < \theta \rangle$ of regular cardinals, cofinal in $\mu$, such that there is a scale of length 
  $\mu^+$ in $\prod_{\xi < \theta} \mu_\xi$.
\end{theorem}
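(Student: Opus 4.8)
The plan is to reduce the statement to the following: there is an increasing sequence $\langle \mu_i \mid i < \theta \rangle$ of regular cardinals, cofinal in $\mu$, such that $(\prod_{i < \theta} \mu_i, <^*)$ has true cofinality $\mu^+$ --- a scale in the sense defined above is precisely a $<^*$-increasing, $<^*$-cofinal sequence of order type $\mu^+$. To begin, fix any increasing sequence $\langle \nu_i \mid i < \theta \rangle$ of regular cardinals, cofinal in $\mu$, with $\nu_0 > \theta^{+3}$ (possible since $\mu$, being singular, is a limit cardinal above $\theta$, whence $\mu > \theta^{+3}$). Writing $\mu = \bigcup_{i < \theta} \nu_i$ as an increasing union of sets of size $< \mu$, a standard diagonalization shows that every subset of $(\prod_{i < \theta} \nu_i, <^*)$ of size $\le \mu$ has a $<^*$-upper bound lying in $\prod_{i < \theta} \nu_i$; in particular $\mathrm{cf}(\prod_{i < \theta} \nu_i, <^*) \ge \mu^+$.

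Next I would build a $<^*$-increasing sequence $\langle f_\alpha \mid \alpha < \mu^+ \rangle$ in $\prod_{i < \theta} \nu_i$ by setting $f_0 \equiv 0$, $f_{\alpha + 1} = f_\alpha + 1$ pointwise, and, at a limit $\gamma < \mu^+$, letting $f_\gamma$ be the pointwise successor of some $<^*$-upper bound of $\langle f_\alpha \mid \alpha < \gamma \rangle$ (which exists by the previous paragraph, as $|\gamma| \le \mu$). With $X = \theta$ and $I = J^{bd}_\theta$, this construction automatically satisfies the hypothesis of Lemma \ref{club_guessing_lemma} taken with its parameter ``$\mu$'' equal to $\theta^+$: for any $\gamma$ with $\cf(\gamma) = (\theta^+)^{++} = \theta^{+3}$, taking $D_\gamma = \gamma$ gives $\sup\{f_\alpha \mid \alpha \in D_\gamma\} < f_\gamma$ pointwise, and the inequalities $\theta < \theta^+ < \theta^{+3} < \mu^+$ required by the lemma hold. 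Lemma \ref{club_guessing_lemma} then provides an exact upper bound $g$ for $\langle f_\alpha \mid \alpha < \mu^+ \rangle$ with $\cf(g(i)) > \theta^+$ for every $i < \theta$; replacing $g$ by $\min(g, \langle \nu_i \mid i < \theta \rangle)$ --- which is again an exact upper bound --- I may assume $g \in \prod_{i < \theta} \nu_i$.

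Put $\mu_i := \cf(g(i))$, so each $\mu_i$ is regular with $\theta^+ < \mu_i < \mu$. Since $g$ is an exact upper bound, $\langle f_\alpha \mid \alpha < \mu^+ \rangle$ is $<^*$-cofinal in $\prod_{i < \theta} g(i)$, and fixing an increasing cofinal map $\mu_i \to g(i)$ for each $i$ transports this to a scale of length $\mu^+$ in $\prod_{i < \theta} \mu_i$; thus $(\prod_{i < \theta} \mu_i, <^*)$ has true cofinality $\mu^+$. Thinning $\langle \mu_i \mid i < \theta \rangle$ to a strictly increasing subsequence (which, by restricting the cofinal chain, preserves the true cofinality) then yields a scale of length $\mu^+$ --- \emph{provided} $\langle \mu_i \mid i < \theta \rangle$ is cofinal in $\mu$.

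That proviso is the main obstacle: a priori the exact upper bound $g$ could be ``supported too low,'' i.e.\ $\rho := \sup_{i < \theta} \mu_i < \mu$, in which case we have only produced a scale of length $\mu^+$ over the smaller cardinal $\rho$ (which is necessarily singular, as $\cf(\rho) \le \theta < \mu_0 \le \rho$). To rule this out, rather than starting from an arbitrary $\langle \nu_i \mid i < \theta \rangle$ one should start from one chosen with suitable minimality properties: first so that $\lambda := \mathrm{cf}(\prod_{i < \theta} \nu_i, <^*)$ is least among all increasing sequences of regular cardinals cofinal in $\mu$, and then, among those realizing $\lambda$, so that the $\nu_i$ are as small as possible. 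The crucial point --- and this is where essentially all of the genuine PCF-theoretic content of the argument lies --- is that for such a minimal choice the exact upper bound $g$ obtained above must in fact satisfy $g(i) = \nu_i$ for $J^{bd}_\theta$-almost every $i$: otherwise $\langle \cf(g(i)) \mid i < \theta \rangle$ would be a sequence of regular cardinals, strictly below $\langle \nu_i \mid i < \theta \rangle$ almost everywhere, again realizing a true cofinality of $\mu^+$, which --- according to whether its supremum equals $\mu$ or not --- contradicts one of the two minimality conditions. Hence $\langle \mu_i \mid i < \theta \rangle$ agrees with $\langle \nu_i \mid i < \theta \rangle$ modulo a bounded set and so is cofinal in $\mu$; a parallel reflection applied to an initial segment of length $\mu^+$ of the resulting scale forces $\lambda = \mu^+$. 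Carrying out these two minimality reductions carefully completes the proof; the remaining ingredients --- the diagonal bounding fact, the transfer of a true cofinality through an exact upper bound, the thinning to a strictly increasing subsequence, and the arithmetic $\mu > \theta^{+3}$ --- are routine.
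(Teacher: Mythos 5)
The paper does not prove this statement at all---it is quoted from Shelah's \emph{Cardinal Arithmetic}, with a textbook proof available in Abraham--Magidor---so your sketch has to be judged on its own, and it has two gaps, one minor and one fatal. The minor one: the claim that your construction ``automatically'' satisfies the hypothesis of Lemma \ref{club_guessing_lemma} with $D_\gamma=\gamma$ is false. At a limit $\gamma$ you set $f_\gamma=h+1$ where $h$ is merely a $<^*$-upper bound of $\{f_\alpha\mid\alpha<\gamma\}$; since the bounded exceptional sets vary with $\alpha$, the pointwise supremum $\sup\{f_\alpha\mid\alpha<\gamma\}$ need not be below $h+1$ pointwise, nor even modulo a bounded set (and when $|\gamma|\geq\nu_i$ its value at $i$ can even equal $\nu_i$). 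The standard repair is to build the domination in by hand: at limits $\gamma$ with $\cf(\gamma)<\nu_0$ fix a club $D_\gamma\subseteq\gamma$ of order type $\cf(\gamma)$ and set $f_\gamma=\sup\{f_\alpha\mid\alpha\in D_\gamma\}+1$, which lies in $\prod_{i<\theta}\nu_i$ because $\cf(\gamma)<\nu_0\leq\nu_i$. So this step is wrong as written but repairable.

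The fatal gap is exactly the point you flag as ``the main obstacle'' and then wave away. If the eub $g$ satisfies $\rho:=\sup_{i<\theta}\cf(g(i))<\mu$, then $\langle\cf(g(i))\mid i<\theta\rangle$ is \emph{not} cofinal in $\mu$, hence is not a competitor in either of your two minimizations (both of which range over sequences cofinal in $\mu$), so no contradiction with minimality follows, contrary to what you assert. Nor is there any outright ZFC contradiction to be had from this configuration: a $\theta$-indexed product of regular cardinals all below some $\rho<\mu$ can consistently have true cofinality $\geq\mu^+$ (this is just the statement that smaller singulars can have large $\mathrm{pp}$), so the ``supported too low'' case cannot be dismissed by the kind of cardinality reflection you describe. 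Similarly, the final claim that ``a parallel reflection applied to an initial segment of length $\mu^+$ of the resulting scale forces $\lambda=\mu^+$'' is unsubstantiated: an initial segment of a longer increasing sequence is not cofinal in the product, so it is unclear what is being reflected or minimized against. Ruling out low-supported exact upper bounds and pinning the true cofinality at exactly $\mu^+$ is precisely where the real content of Shelah's theorem lies, and the known proofs (e.g.\ Abraham--Magidor, or Shelah's pcf machinery via the ideals $J_{<\lambda}$) require a genuinely more careful argument than the two-step minimization you propose. As it stands, your proposal establishes only the routine half: $\mu^+$-directedness of $(\prod_{i<\theta}\nu_i,<^*)$, the existence of an eub with coordinatewise cofinalities above $\theta^+$, and the transfer through cofinal maps.
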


We now turn to some basic results concerning elementary substructures. 
Suppose $\kappa$ is a regular, uncountable cardinal and $m < \omega$. Let 
$\Upsilon > \kappa$ be a sufficiently large regular cardinal, and let 
$\vartriangleleft$ be a well-ordering of $H(\Upsilon)$. We will abuse 
notation and write $M \prec H(\Upsilon)$ to mean $M \prec (H(\Upsilon), \in, \vartriangleleft)$. Let 
\[
  \mathcal{X} = \{M \cap \kappa^{+m} \mid M \prec H(\Upsilon), ~  
  |M| < \kappa,\mbox{ and }M \cap \kappa \in \kappa\},
\] 
and note that $\mathcal{X}$ is a 
club in $\mathcal{P}_\kappa(\kappa^{+m})$. For $x \in \mathcal{X}$, let $\kappa_x = x \cap \kappa$, and define 
$\chi_x \in \prod_{i \leq m} \kappa^{+i}$ by $\chi_x(i) = \sup(x \cap \kappa^{+i})$. 
The following lemma is standard; a proof can be found in \cite{magidor_sinapova}.

\begin{lemma} \label{uniqueness_lemma}
  Suppose $x, y \in \mathcal{X}$, $\chi_x = \chi_y =: \chi$, and, for all $i \leq m$, 
  $\cf(\chi(i)) > \omega$. Then $x = y$.
\end{lemma}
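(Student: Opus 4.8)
The plan is to work with elementary submodels witnessing $x$ and $y$ and to transfer the hypothesis on $\chi$ level by level. Fix $M_x, M_y \prec H(\Upsilon)$ with $|M_x|, |M_y| < \kappa$, with $M_x \cap \kappa, M_y \cap \kappa \in \kappa$, and with $M_x \cap \kappa^{+m} = x$ and $M_y \cap \kappa^{+m} = y$. I will prove by induction on $i \le m$ that $M_x \cap \kappa^{+i} = M_y \cap \kappa^{+i}$; the case $i = m$ is exactly $x = y$. For $i = 0$: since $M_x \cap \kappa = \kappa_x \in \kappa$, we have $\chi(0) = \chi_x(0) = \sup(\kappa_x)$, and $\cf(\chi(0)) > \omega$ forces $\kappa_x$ to be a limit ordinal with $\chi(0) = \kappa_x$; symmetrically $\chi(0) = \kappa_y$, so $M_x \cap \kappa = \kappa_x = \kappa_y = M_y \cap \kappa$.

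For the inductive step, suppose $M_x \cap \kappa^{+i} = M_y \cap \kappa^{+i} =: a$, and write $\delta := \chi(i+1)$, so that $\sup(M_x \cap \kappa^{+i+1}) = \sup(M_y \cap \kappa^{+i+1}) = \delta$ and $\cf(\delta) > \omega$. Inside $H(\Upsilon)$, fix for each $\eta$ with $\kappa^{+i} \le \eta < \kappa^{+i+1}$ the $\vartriangleleft$-least bijection $b_\eta \colon \kappa^{+i} \to \eta + 1$; the sequence $\langle b_\eta \mid \kappa^{+i} \le \eta < \kappa^{+i+1} \rangle$ is definable from $\kappa^{+i}$ and $\kappa^{+i+1}$, hence lies in $M_x \cap M_y$. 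The crucial computation is that for every $\eta \in M_x$ with $\kappa^{+i} \le \eta < \kappa^{+i+1}$ one has $M_x \cap (\eta + 1) = b_\eta[a]$: the inclusion $\supseteq$ holds because $b_\eta \in M_x$ and $a \subseteq M_x$, while $\subseteq$ is immediate from elementarity of $M_x$, as every $\gamma \in M_x \cap (\eta+1)$ equals $b_\eta(\rho)$ for some $\rho < \kappa^{+i}$, and then $\rho \in M_x \cap \kappa^{+i} = a$. The same identity holds for $M_y$.

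Granting this, it suffices to show that $M_x \cap M_y \cap \kappa^{+i+1}$ is cofinal in $\delta$. Assume it is, and let $\xi \in M_x \cap \kappa^{+i+1}$; I must see $\xi \in M_y$. First, $\delta \notin M_x$ (else $\delta + 1 \in M_x \cap \kappa^{+i+1}$, contradicting $\sup(M_x \cap \kappa^{+i+1}) = \delta$), so $\xi < \delta$; and if $\xi \le \kappa^{+i}$ we are done since $\kappa^{+i} \in M_y$ and $M_x \cap \kappa^{+i} = M_y \cap \kappa^{+i}$. So assume $\kappa^{+i} < \xi < \delta$ and choose $\zeta \in M_x \cap M_y \cap \kappa^{+i+1}$ with $\zeta > \xi$. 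Applying the crucial computation in $M_x$ to $\zeta$, we get $\xi \in M_x \cap (\zeta + 1) = b_\zeta[a]$, so $\xi = b_\zeta(\rho)$ for some $\rho \in a = M_y \cap \kappa^{+i}$; since $b_\zeta \in M_y$ and $\rho \in M_y$, it follows that $\xi \in M_y$. By symmetry $M_x \cap \kappa^{+i+1} = M_y \cap \kappa^{+i+1}$, finishing the induction.

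It remains to prove that $M_x \cap M_y \cap \kappa^{+i+1}$ is cofinal in $\delta$, and this is the only point at which $\cf(\delta) > \omega$ is used; I expect it to be the main obstacle. (With $\cf(\delta) = \omega$ the statement is false, so the hypothesis is essential.) The approach I have in mind: since $\cf(\delta) > \omega$ and each of $M_x \cap \kappa^{+i+1}$ and $M_y \cap \kappa^{+i+1}$ is cofinal in $\delta$, the set of limit points below $\delta$ of each is club in $\delta$, and hence so is their intersection $C$; one then shows, using the crucial computation together with the fact that neither model has elements in $[\delta, \kappa^{+i+1})$, that a cofinal-in-$\delta$ subset of $C$ actually consists of members of $M_x \cap M_y$. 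The delicate part is that a limit point of $M_x \cap \kappa^{+i+1}$ need not lie in $M_x$, so one must work to produce genuine common elements; uncountable cofinality of $\delta$ is exactly what excludes the configurations — possible when $\cf(\delta) = \omega$ — in which the two cofinal sets share only boundedly many ordinals.
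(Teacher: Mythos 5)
Your skeleton is the natural (and standard) one: fix witnessing models, induct on $i \le m$, handle the base case via the fact that $M \cap \kappa$ is an ordinal, and at level $i+1$ use the $\vartriangleleft$-least bijections $b_\eta$ to get the identity $M_x \cap (\eta+1) = b_\eta[a]$ for $\eta \in M_x$, so that any single common ordinal $\zeta \in M_x \cap M_y$ above a given $\xi$ transfers $\xi$ from one model to the other. That reduction is sound (modulo the usual convention that the witnessing models contain $\kappa, \ldots, \kappa^{+m}$, which you need for $b_\eta$ and the sequence $\langle b_\eta \rangle$ to lie in them; also, in the base case it is closure of $M \cap \kappa$ under ordinal successor, not the hypothesis $\cf(\chi(0)) > \omega$, that rules out $\kappa_x$ being a successor ordinal --- a successor $\beta+1$ with $\cf(\beta) > \omega$ is not excluded by your stated reason).

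The problem is that the proof stops exactly where the lemma's content lies. The claim that $M_x \cap M_y \cap \kappa^{+i+1}$ is cofinal in $\delta$ is, given your bijection computation, \emph{equivalent} to the inductive step itself, and it is precisely the point where $\cf(\delta) > \omega$ and elementarity must be combined; you leave it unproven and only describe a hope. The sketch does not work as stated: the set of common limit points of $M_x \cap \delta$ and $M_y \cap \delta$ is indeed club in $\delta$, but a limit point of $M_x \cap \delta$ need not belong to $M_x$ (typically those of countable cofinality do not), and there is no reason that cofinally many common limit points belong to either model, let alone both --- as you yourself acknowledge. Nor is the missing step a routine verification from what you have established: the properties on the table (both traces cofinal in $\delta$, $\cf(\delta) > \omega$, and each trace below each of its own points $\eta$ equal to the canonical set $b_\eta[a]$) can be satisfied abstractly by two \emph{disjoint} cofinal ``threads'' through a single assignment $\eta \mapsto c_\eta$, so the remaining claim cannot follow from coherence-along-each-model alone and requires a further, essential use of elementarity of $M_x$ and $M_y$. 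That is the heart of the argument the paper defers to Magidor--Sinapova for, and it is missing from your proposal.
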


We end this section with the following lemma, whose proof, which can be found in \cite{proper_forcing},
provides a simple example of using a combinatorial object defined in an inner model, $V$, to obtain information about an outer 
model, $W$, a technique that we will exploit later in the paper.

\begin{lemma}[Shelah, \cite{proper_forcing}, Chapter XIII, Lemma 4.9] \label{shelah_lemma}
  Suppose that $V$ is an inner model of $W$, $\lambda$ is a regular cardinal in $V$, and $(\lambda^+)^V$ remains a cardinal in 
  $W$. Then, in $W$, $\cf(\lambda) = \cf(|\lambda|)$.
\end{lemma}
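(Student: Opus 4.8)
Write $\mu := |\lambda|^W$ and $\Lambda := (\lambda^+)^V$. The first thing I would establish is that $\Lambda = (\mu^+)^W$: since $\lambda$ is a cardinal in $V$, every ordinal $\alpha$ with $\lambda \le \alpha < \Lambda$ has $|\alpha|^V = \lambda$, so in $V$ — hence in $W$ — there is a surjection $\lambda \twoheadrightarrow \alpha$, giving $|\alpha|^W \le \mu$. Thus every ordinal below the $W$-cardinal $\Lambda$ has $W$-cardinality at most $\mu$, forcing $\Lambda \le (\mu^+)^W$, while $\Lambda > \lambda \ge \mu$ gives the reverse inequality. So $\Lambda = (\mu^+)^W$, and in particular $\Lambda$ is regular in $W$. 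If $\mu = \lambda$ there is nothing to prove, so from now on assume $\mu < \lambda$ (note $\mu$ and $\lambda$ are then both cardinals in $V$ as well, since $W$-cardinals are $V$-cardinals).

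\textbf{The combinatorial object in $V$.} Fix in $V$, for each $\alpha$ with $\lambda \le \alpha < \Lambda$, a bijection $e_\alpha \colon \lambda \to \alpha$, and let $\vec e = \langle e_\alpha \mid \lambda \le \alpha < \Lambda \rangle \in V$. Now work in $W$, and set $\theta := \cf(\lambda)$ and $\rho := \cf(\mu)$; the goal is to show $\theta = \rho$. Fix an increasing continuous sequence $\langle \lambda_i \mid i < \theta \rangle$ cofinal in $\lambda$. For every $\alpha$ with $\lambda \le \alpha < \Lambda$ one has $\alpha = \bigcup_{i<\theta} e_\alpha[\lambda_i]$ as an increasing union, so exactly one of the following holds: either $\sup e_\alpha[\lambda_i] < \alpha$ for all $i < \theta$, in which case $\langle \sup e_\alpha[\lambda_i] \mid i < \theta\rangle$ witnesses $\cf(\alpha) \le \theta$; or there is a least $i(\alpha) < \theta$ such that $e_\alpha[\lambda_{i(\alpha)}]$ is cofinal in $\alpha$, and then $\cf(\alpha) \le |\lambda_{i(\alpha)}|^W$.

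\textbf{The cofinality transfer.} Suppose toward a contradiction that $\theta \ne \rho$. By symmetry — replacing $\lambda$ by $\mu$, the $e_\alpha$ by the bijections $\mu \to \alpha$ obtained by composing $e_\alpha$ with a bijection $\mu \to \lambda$ of $W$, and $\langle \lambda_i \rangle$ by a cofinal sequence in $\mu$ of length $\rho$ — we may assume $\rho > \theta$, so $\theta < \mu$. The set $S := \{\alpha < \Lambda \mid \cf(\alpha) = \rho\}$ is stationary in $\Lambda$ in $W$, since $\rho$ is regular and $\rho \le \mu < \Lambda = (\mu^+)^W$. Every $\alpha \in S$ falls into the second case of the dichotomy (as $\cf(\alpha) = \rho > \theta$), so $i(\alpha)$ is defined; since $\alpha \mapsto i(\alpha)$ maps $S$ into $\theta$, $\theta < \Lambda$, and $\Lambda$ is regular in $W$, there is a fixed $i^* < \theta$ for which $T := \{\alpha \in S \mid i(\alpha) = i^*\}$ is stationary in $\Lambda$. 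Writing $\nu^* := \lambda_{i^*} < \lambda$, we have obtained: for every $\alpha \in T$, the restriction $e_\alpha \restriction \nu^*$ is an injection of the \emph{fixed} ordinal $\nu^*$ into $\alpha$ with cofinal image, and $\cf(\alpha) = \rho$.

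\textbf{The main obstacle.} What remains — and this is the step I expect to require real work — is to turn the configuration just produced into an outright contradiction with the preservation of $\Lambda$: a single ground-model ordinal $\nu^* < \lambda$ whose ground-model bijective images carry cofinal sequences of fixed length $\rho$ into stationarily many $\alpha < \Lambda$. I would attack this by examining, for $\alpha < \beta$ in $T$, the ground-model bijections $e_\beta^{-1}\circ e_\alpha \colon \lambda \to \lambda$, which tie $e_\alpha \restriction \nu^*$ to $e_\beta \restriction \nu^*$ on a subset of $\lambda$ of $V$-size $|\nu^*|^V < \lambda$; the aim is to extract from these, together with a pressing-down argument inside $\nu^*$, either a covering of $\Lambda$ by at most $\mu$ sets of $W$-cardinality at most $\mu$ — impossible since $\Lambda = (\mu^+)^W$ — or directly a cofinal sequence in $\mu$ of length $\theta$, contradicting $\rho > \theta$. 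Should the bijections $e_\alpha$ alone prove too unstructured, one can instead take for the $V$-object a coherent filtration of $\Lambda$ witnessing $\Lambda \in I[\Lambda]$ (available since $\lambda$ is regular in $V$, so $(\lambda^+)^V \in I[(\lambda^+)^V]$ in $V$) and run the same analysis in $W$, using the coherence to control how the approaching sets behave after cofinalities are collapsed. Once the contradiction is secured for $\rho > \theta$, and symmetrically for $\rho < \theta$, we conclude $\cf(\lambda) = \rho = \cf(\mu) = \cf(|\lambda|)$ in $W$, as required.
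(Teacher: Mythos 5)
Your preliminary steps are correct: the identification $(\lambda^+)^V=(\mu^+)^W$, the reduction to $\mu<\lambda$, the dichotomy obtained from the $V$-bijections $e_\alpha$ together with a cofinal $\theta$-sequence in $\lambda$, and the pressing-down that fixes $i^*$. But the proof stops exactly where the content of the lemma begins, and you say so yourself. The configuration you reach --- a fixed $\nu^*<\lambda$ and stationarily many $\alpha<\Lambda$ with $\cf^W(\alpha)=\rho$ in which $e_\alpha[\nu^*]$ is cofinal --- carries no contradiction on its own: it only yields $\rho\le|\nu^*|^W\le\mu$, which is trivial, and on the $V$-side it only says that $\cf^V(\alpha)\le|\nu^*|^V<\lambda$ for those $\alpha$, which holds for ``most'' $\alpha<\lambda^+$ in any case (indeed, the weaker fact that some $V$-regular $\gamma\le\lambda$ has $\cf^W(\gamma)=\rho$ already follows from the bare observation that every limit $\alpha<\Lambda$ satisfies $\cf^W(\alpha)=\cf^W(\cf^V(\alpha))$). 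A telling symptom: nothing you have done so far uses the regularity of $\lambda$ in $V$ --- only that $\lambda$ is a $V$-cardinal (to get the $e_\alpha$'s) and that $(\lambda^+)^V$ survives. Regularity is essential: for $V$-singular $\lambda$ one only has the much weaker Lemma \ref{csv_lemma}, with a two-cardinal margin of error, so any correct proof must exploit regularity precisely in the step you leave open. In short, the ``main obstacle'' paragraph is not a remaining technicality; it is the whole theorem.

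The sketched attacks on that step also have defects. The fallback claim that $(\lambda^+)^V\in I[(\lambda^+)^V]$ holds in $V$ because $\lambda$ is regular is false: that is the approachability property at $\lambda$, which can consistently fail for regular $\lambda$; what is provable in ZFC is only $S^{\lambda^+}_{<\lambda}\in I[\lambda^+]$. Moreover, the ``symmetry'' reduction to the case $\rho>\theta$ replaces the $V$-bijections by their compositions with a bijection of $W$, destroying exactly the ground-model coherence (e.g.\ $e_\beta^{-1}\circ e_\alpha\in V$) that your plan for the main step intends to exploit; the hypotheses are not symmetric in $\lambda$ and $\mu$, so the case $\rho<\theta$ cannot simply be referred to the other case. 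For comparison: the paper itself does not reprove this lemma (it cites Shelah), but the arguments in this circle --- Shelah's original one, and the technique this paper uses for Theorem \ref{fat_tree_guessing_theorem} and Lemma \ref{csv_lemma} --- derive the contradiction not from the raw bijections but from finer combinatorial objects built in $V$ (e.g.\ a $<^*$-increasing, club-coherent sequence of functions of length $\lambda^+$) whose restrictions along a cofinal $\theta$-sequence in $\lambda$ are analyzed in $W$ via exact upper bounds and Lemmas \ref{small_cf_lemma} and \ref{club_guessing_lemma}; the regularity of $\lambda$ enters through the coherence/continuity properties of that object. Some machinery of this kind appears unavoidable, and it is entirely absent from the proposal.
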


\section{Two covering lemmas} \label{directed_section}

In this section, we prove two technical covering lemmas which will be useful for our results in Sections 
\ref{guessing_section} and \ref{diagonal_section}. 

\begin{lemma} \label{first_covering_lemma}
  Suppose that:
  \begin{enumerate}
    \item $V$ is an inner model of $W$;
    \item $\theta < \kappa < \lambda$ are cardinals in $V$, with $\theta$ and $\kappa$ regular and 
      $\lambda \leq \kappa^{+\theta^+}$;
    \item for every $V$-regular cardinal $\nu \in [\kappa, \lambda)$, $\cf^W(\nu) = \theta$. 
  \end{enumerate}
  Then, for every $V$-cardinal $\nu \in [\kappa, \lambda)$, there is, in $W$, a $\subseteq$-directed 
  subset $Y_\nu \subseteq (\mathcal{P}_\kappa(\nu))^V$ such that $|Y_\nu| = \theta$ and $\bigcup Y_\nu = \nu$. 
  Moreover, if $\theta = \omega$, then we can arrange so that $(Y_\nu, \subseteq)$ is isomorphic to $(\theta, <)$.
\end{lemma}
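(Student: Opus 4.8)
The plan is to prove the main assertion by induction on the $V$-cardinals $\nu$ in $[\kappa,\lambda)$: at stage $\nu$ I will produce, in $W$, a $\subseteq$-directed set $Y_\nu\subseteq(\mathcal{P}_\kappa(\nu))^V$ with $|Y_\nu|=\theta$ and $\bigcup Y_\nu=\nu$, and afterwards derive the ``moreover'' clause. The governing principle at every step — and the source of whatever subtlety the argument has — is that a set lying in $W$ is already an element of $V$ as soon as it is a finite union of members of $V$, or the pointwise image under a function of $V$ of a member of $V$, whereas an infinite union of $V$-sets indexed along a sequence that belongs only to $W$ need not be in $V$. For this reason the construction will only ever form finite unions and $V$-function images, which is also why, for $\theta>\omega$, it produces a directed family rather than a $\subseteq$-chain. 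For the base case $\nu=\kappa$: since $\kappa$ is $V$-regular and lies in $[\kappa,\lambda)$, hypothesis (3) gives $\cf^W(\kappa)=\theta$; fix in $W$ an increasing cofinal sequence $\langle\kappa_i\mid i<\theta\rangle$ in $\kappa$ (which we retain for later use) and put $Y_\kappa=\{\kappa_i\mid i<\theta\}$. Each $\kappa_i$, as the set of smaller ordinals, lies in $V$ with $|\kappa_i|^V<\kappa$ because $\kappa$ is a $V$-cardinal, so $Y_\kappa\subseteq(\mathcal{P}_\kappa(\kappa))^V$; it is linearly ordered by $\subseteq$, has size $\theta$, and has union $\kappa$.

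For the successor step, suppose $\nu=\mu^+$ with $\mu$ a $V$-cardinal (necessarily in $[\kappa,\lambda)$), and let $Y_\mu$ be given by the induction hypothesis applied to $\mu$. Since $\nu$ is $V$-regular and lies in $[\kappa,\lambda)$, hypothesis (3) gives $\cf^W(\nu)=\theta$; fix in $W$ an increasing cofinal sequence $\langle\nu_i\mid i<\theta\rangle$ in $\nu$ with $\nu_i\geq\mu$ for all $i$ (pass to a tail, using regularity of $\theta$), and fix in $V$ a sequence $\langle e_\beta\mid\mu\leq\beta<\nu\rangle$ of bijections $e_\beta\colon\mu\to\beta$. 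In $W$, let
\[
  \mathcal{A}_\nu=\{e_{\nu_i}[y]\mid i<\theta,\ y\in Y_\mu\}\cup\{\kappa_i\mid i<\theta\},
\]
and let $Y_\nu$ consist of all nonempty finite unions of members of $\mathcal{A}_\nu$. Each $e_{\nu_i}$ lies in $V$, being the value at the ordinal $\nu_i$ of a $V$-sequence, so each $e_{\nu_i}[y]$ lies in $V$, has $V$-cardinality $|y|^V<\kappa$, and is a subset of $\nu_i<\nu$; together with the treatment of the $\kappa_i$ as in the base case, this gives $\mathcal{A}_\nu\subseteq(\mathcal{P}_\kappa(\nu))^V$, and then $Y_\nu\subseteq(\mathcal{P}_\kappa(\nu))^V$ since a finite union of members of $(\mathcal{P}_\kappa(\nu))^V$ is again one ($\kappa$ being regular and uncountable). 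The set $Y_\nu$ is $\subseteq$-directed by construction; $|Y_\nu|\leq|\mathcal{A}_\nu|=\theta$ while $Y_\nu$ already contains the $\theta$ distinct ordinals $\kappa_i$, so $|Y_\nu|=\theta$; and $\bigcup Y_\nu=\bigcup\mathcal{A}_\nu\supseteq\bigcup_{i<\theta}e_{\nu_i}\big[\bigcup Y_\mu\big]=\bigcup_{i<\theta}e_{\nu_i}[\mu]=\bigcup_{i<\theta}\nu_i=\nu$.

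For a limit $V$-cardinal $\nu\in(\kappa,\lambda)$, write $\nu=\kappa^{+\eta}$; since $\nu<\lambda\leq\kappa^{+\theta^+}$ we get $\eta<\theta^+$, hence $\rho:=\cf^V(\nu)=\cf(\eta)\leq\theta$ — this is the only point at which the bound $\lambda\leq\kappa^{+\theta^+}$ is used. Fix in $V$ an increasing sequence $\langle\mu_j\mid j<\rho\rangle$ of $V$-cardinals cofinal in $\nu$ with $\mu_0\geq\kappa$, and, using the induction hypothesis applied to each $\mu_j$, fix $Y_{\mu_j}$ for $j<\rho$; the sequence $\langle Y_{\mu_j}\mid j<\rho\rangle$ lies in $W$. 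Now put $\mathcal{A}_\nu=\bigcup_{j<\rho}Y_{\mu_j}\cup\{\kappa_i\mid i<\theta\}$ and let $Y_\nu$ be the set of nonempty finite unions of members of $\mathcal{A}_\nu$; the verification that $Y_\nu$ has the required properties is exactly as in the successor step, now using $|\mathcal{A}_\nu|\leq\rho\cdot\theta+\theta=\theta$ and $\bigcup\mathcal{A}_\nu=\sup_{j<\rho}\mu_j=\nu$. This completes the induction.

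Finally, assume $\theta=\omega$ and fix $\nu\in[\kappa,\lambda)$ together with its $Y_\nu$. Enumerate $Y_\nu=\{a_n\mid n<\omega\}$ and build, in $W$, a $\subseteq$-increasing sequence $\langle b_n\mid n<\omega\rangle$ of members of $Y_\nu$ with $a_n\subseteq b_n$ for all $n$, using directedness of $Y_\nu$ at each step; then $\bigcup_n b_n=\bigcup Y_\nu=\nu$. Since each $b_n$ has $V$-cardinality below $\kappa$ while $\nu$, being a $V$-cardinal $\geq\kappa$, does not, the sequence is not eventually constant, so its range is a $\subseteq$-chain of order type $\omega$ with union $\nu$; thinning to a strictly increasing enumeration, this range is the desired $Y_\nu'$ with $(Y_\nu',\subseteq)\cong(\omega,<)$. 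The main obstacle throughout is precisely the bookkeeping flagged at the outset — guaranteeing that every set we place into $Y_\nu$ genuinely belongs to $V$ and has $V$-cardinality $<\kappa$ — and it is met by the discipline of only ever forming finite unions and images under functions of $V$.
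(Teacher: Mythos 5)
Your proof is correct and follows essentially the same route as the paper: induction on $V$-cardinals, with the successor step using images of the inductively given family under $V$-functions indexed along a $W$-cofinal sequence, finite unions for directedness, the bound $\lambda \leq \kappa^{+\theta^+}$ at limits, and the same chain-extraction argument for the $\theta = \omega$ clause. The minor variations (bijections instead of surjections, adjoining the ordinals $\kappa_i$ to guarantee $|Y_\nu| \geq \theta$, and using a cofinal subsequence of cardinals at limit stages) are cosmetic.
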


The ``moreover'' part of Lemma \ref{first_covering_lemma} appears as \cite[Propositions 0.6, 0.7]{gitik_witnessing_seq}.

\begin{proof}
  We proceed by induction on $V$-cardinals $\nu \in [\kappa, \lambda)$. For the base case $\nu = \kappa$, 
  simply fix in $W$ an increasing sequence of ordinals $\langle \kappa_i \mid i < \theta \rangle$ cofinal 
  in $\kappa$ and let $Y_\kappa = \{\kappa_i \mid i < \theta\}$. Consider now the successor case. 
  Suppose $\nu \in [\kappa, \lambda)$ and we have constructed $Y_\nu$. We construct 
  $Y_{\nu^+}$. Fix in $V$ a sequence $\langle e_\beta \mid 
  \beta < \nu^+ \rangle$ such that, for all $\beta < \nu^+$, $e_\beta:\nu \rightarrow \beta$ is a surjection.
  In $W$, let $\langle \beta_i \mid i < \theta \rangle$ be increasing and cofinal in $\nu^+$. Let 
  $Z_{\nu^+} = \{e_{\beta_i}``x \mid x \in Y_\nu, ~ i < \theta\}$, and let $Y_{\nu^+}$ consist of all finite unions of 
  elements of $Z_{\nu^+}$. Then $Y_{\nu^+}$ is a $\subseteq$-directed subset of $(\mathcal{P}_\kappa(\nu^+))^V$, 
  $|Y_{\nu^+}| = \theta$, and $\bigcup Y_{\nu^+} = \nu^+$.

  We finally consider the limit case. Suppose $\nu \in (\kappa, \lambda)$ is a limit cardinal in $V$ and we have 
  defined $Y_\mu$ for all $\mu \in (\kappa, \nu)$. Let $Z_\nu = \bigcup_{\mu \in (\kappa, \nu)} Y_\mu$, and 
  let $Y_\nu$ consist of all finite unions of elements of $Z_\nu$. Then $Y_\nu \subseteq (\mathcal{P}_\kappa(\nu))^V$, 
  $\bigcup Y_\nu = \nu$, and, since $\lambda \leq \kappa^{+\theta^+}$ in $V$, $|Y_\nu| = \theta$. 

  To show the ``moreover'' clause, fix a $V$-cardinal $\nu \in [\kappa, \lambda)$ and enumerate 
  $Y_\nu$ as $\{x_k \mid k < \omega\}$. By recursion on $\ell < \omega$, 
  define a subset $Y^*_\nu = \{y_\ell \mid \ell < \omega\}$ of $Y_\nu$ 
  such that, for all $k < \ell < \omega$, we have $x_k, y_k \subseteq y_\ell$. The construction 
  is straightforward, using the directedness of $Y_\nu$. Then $Y^*_\nu \subseteq (\mathcal{P}_\kappa(\nu))^V$, 
  $(Y^*_\nu, \subseteq)$ is isomorphic to $(\omega, <)$, and $\bigcup Y^*_\nu = \nu$.
\end{proof}

\begin{lemma} \label{second_covering_lemma}
  Suppose that:
  \begin{enumerate}
    \item $V$ is an inner model of $W$;
    \item $\theta \leq \kappa$ are regular cardinals in $V$;
    \item $m < \omega$ and, for all $i \leq m$, $\cf^W(\kappa^{+i}) = \theta$.
  \end{enumerate}
  Then, for every $i \leq m$, there is, in $W$, a $\subseteq$-increasing and cofinal sequence 
  $\vec{y}_i = \langle y_{i, \eta} \mid \eta < \theta \rangle$ from $(\mathcal{P}_\kappa((\kappa^{+i})^V))^V$.
\end{lemma}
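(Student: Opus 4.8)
The plan is to argue by induction on $i\le m$, building $\vec y_i$ from $\vec y_{i-1}$ together with a cofinal sequence witnessing $\cf^W((\kappa^{+i})^V)=\theta$. For the base case $i=0$, since $\cf^W(\kappa)=\theta$ one fixes in $W$ an increasing, continuous, cofinal sequence $\langle\kappa_\eta\mid\eta<\theta\rangle$ in $\kappa$ and sets $y_{0,\eta}:=\kappa_\eta$; as $\kappa$ is a cardinal of $V$ each $y_{0,\eta}$ lies in $(\mathcal P_\kappa(\kappa))^V$, the sequence is visibly $\subseteq$-increasing, and $\bigcup_{\eta<\theta}y_{0,\eta}=\sup_{\eta<\theta}\kappa_\eta=\kappa$, so it is cofinal.

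For the successor step, suppose $i+1\le m$ and that $\vec y_i$ has been constructed in $W$; write $\nu:=(\kappa^{+i})^V$ and $\mu:=(\kappa^{+i+1})^V$. I would first observe that $\mu$ is collapsed in $W$: it is singular there, no ordinal in $(\nu,\mu)$ is a cardinal of $V$, and no cardinal of $W$ is new, so in fact $|\mu|^W\le\kappa$ (chaining this observation down through $\kappa^{+1},\dots,\kappa^{+i}$) — which is exactly what makes the conclusion possible, since a union of $\theta$ sets of $V$-cardinality $<\kappa$ has $W$-cardinality at most $\kappa$. Then fix in $V$ a sequence $\langle e_\beta\mid\nu\le\beta<\mu\rangle$ of surjections $e_\beta\colon\nu\to\beta+1$ with $e_\beta(0)=\beta$ (arranging inductively that $0\in y_{i,\eta}$ for all $\eta$), and in $W$ an increasing, continuous, cofinal sequence $\langle\beta_\eta\mid\eta<\theta\rangle$ in $\mu$ with $\beta_0\ge\nu$, and attempt the definition $y_{i+1,\eta}:=\bigcup_{\xi\le\eta}e_{\beta_\xi}``y_{i,\eta}$. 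With this definition the sequence is $\subseteq$-increasing by construction; each term is a union of at most $|\eta|+1<\theta\le\kappa$ sets of $V$-cardinality $<\kappa$, hence has $V$-cardinality $<\kappa$ since $\kappa$ is regular in $V$; and cofinality follows by a decoding argument — given $\gamma<\mu$, pick $\eta$ with $\gamma<\beta_\eta$, write $\gamma=e_{\beta_\eta}(\zeta)$ with $\zeta<\nu$, use the induction hypothesis to find $\rho$ with $\zeta\in y_{i,\rho}$, and observe that $\gamma$ is then captured at stage $\max(\eta,\rho)$ (re-decoding $\gamma$ through $e_{\beta_{\max(\eta,\rho)}}$ if necessary and using $\subseteq$-increasingness of $\vec y_i$), while the normalization $e_\beta(0)=\beta$ takes care of the points $\beta_\eta$ themselves.

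The crux — and the step I expect to be hardest — is that each $y_{i+1,\eta}$ must genuinely be a member of $V$. Each single set $e_{\beta_\xi}``y_{i,\xi}$ does lie in $V$, since $e_{\beta_\xi}$ is an entry of the $V$-sequence $\langle e_\beta\rangle$ and $y_{i,\xi}$ is an entry of $\vec y_i$; but a $|\eta|$-fold union of such sets, indexed along the $W$-sequence $\langle\beta_\xi\mid\xi<\eta\rangle$, need not belong to $V$. When $\theta=\omega$ this is automatic, every proper initial segment of the construction being finite and hence in $V$ — this is precisely the mechanism behind Lemma \ref{first_covering_lemma}. For uncountable $\theta$ I would instead arrange that $y_{i+1,\eta}$ is computed entirely inside $V$ from only the two $V$-members $y_{i,\eta}$ and $\beta_\eta$ — for instance as the $\mu$-part of the Skolem hull $\mathrm{Hull}^{\mathfrak A}(y_{i,\eta}\cup\{\beta_\eta\})$ in a fixed well-ordered structure $\mathfrak A\in V$ — while choosing $\langle\beta_\eta\rangle$ in $W$ with enough internal coherence (say, threading a fixed club-sequence from $V$, which $\cf^W(\mu)=\theta<\mu$ makes available) that each earlier $y_{i,\xi}\cup\{\beta_\xi\}$, $\xi<\eta$, is definable in $\mathfrak A$ from $y_{i,\eta}\cup\{\beta_\eta\}$ and so is absorbed into the hull, restoring $\subseteq$-increasingness. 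Verifying that such a coherent cofinal sequence exists in $W$, and that the decoding argument for cofinality survives with the hull in place of the surjection, is where I expect the real work to lie.
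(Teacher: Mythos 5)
Your base case and your $\theta=\omega$ successor step are fine (the latter is essentially the mechanism of Lemma \ref{first_covering_lemma}), and you have correctly isolated the crux: for uncountable $\theta$, a union such as $\bigcup_{\xi\le\eta}e_{\beta_\xi}``y_{i,\eta}$ is indexed along an initial segment of the $W$-sequence $\langle\beta_\xi\mid\xi<\theta\rangle$ and need not lie in $V$. But your proposed repair for that case is a genuine gap, not just unfinished bookkeeping. Taking $y_{i+1,\eta}$ to be the $\mu$-part of a hull $\mathrm{Hull}^{\mathfrak A}(y_{i,\eta}\cup\{\beta_\eta\})$ with $\mathfrak A\in V$ keeps each term in $V$, but the plan for restoring $\subseteq$-increasingness --- that each earlier $y_{i,\xi}\cup\{\beta_\xi\}$ be \emph{definable in $\mathfrak A$} from $y_{i,\eta}\cup\{\beta_\eta\}$ --- has no visible route to success: the sequence $\vec y_i$ is an object of $W$, not of $V$ (its terms are arbitrary elements of $(\mathcal P_\kappa((\kappa^{+i})^V))^V$ chosen in $W$), so there is no reason a fixed $V$-structure should recover $y_{i,\xi}$ from later data, and no ``coherent'' cofinal sequence $\langle\beta_\eta\rangle$ with the required properties is actually constructed (indeed $(\kappa^{+i+1})^V$ is regular in $V$, so a $V$ club-sequence does not obviously supply one). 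As it stands, the uncountable-$\theta$ case --- which is the whole point of stating the lemma for general regular $\theta\le\kappa$ --- is not proved. (Your side remark that $|\mu|^W\le\kappa$ is also unjustified as argued --- a singular $W$-cardinal is still a $W$-cardinal --- but nothing essential hinges on it.)

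The difficulty dissolves if you abandon unions and hulls entirely, which is what the paper does. Fix in $V$ bijections $e_\beta:\kappa^{+i}\to\beta$ for $\kappa^{+i}\le\beta<\kappa^{+i+1}$, fix in $W$ an increasing cofinal $\langle\beta_\xi\mid\xi<\theta\rangle$ with $\beta_0\ge(\kappa^{+i})^V$, and set $z_{\xi,\eta}=e_{\beta_\xi}``y_{i,\eta}$; each $z_{\xi,\eta}$ is a single image of two $V$-sets, hence in $(\mathcal P_\kappa((\kappa^{+i+1})^V))^V$. The key observation is a pull-back claim: for $\xi_0<\xi_1$ and any $\eta_0$, the set $w=(e_{\beta_{\xi_1}}^{-1})``z_{\xi_0,\eta_0}$ lies in $(\mathcal P_\kappa((\kappa^{+i})^V))^V$, so by the inductive cofinality of $\vec y_i$ there is $\eta_1$ with $w\subseteq y_{i,\eta_1}$, whence $z_{\xi_0,\eta_0}\subseteq z_{\xi_1,\eta_1}$. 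One then defines $y_{i+1,\xi}=z_{\xi,\eta_\xi}$ by recursion on $\xi$, at each stage using the claim (together with regularity of $\theta$ and monotonicity of $z_{\xi,\eta}$ in $\eta$) to choose $\eta_\xi$ absorbing all relevant earlier $z_{\zeta,\eta}$; increasingness is thus built in with every term manifestly in $V$, and cofinality follows by your own decoding argument, since any $x\in(\mathcal P_\kappa((\kappa^{+i+1})^V))^V$ is bounded by some $\beta_\xi$ and pulls back under $e_{\beta_\xi}$ into some $y_{i,\eta}$. You should replace the hull construction by this containment argument; with that change the induction goes through uniformly for all regular $\theta\le\kappa$.
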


\begin{proof}
  For $i \leq m$, let $X_i =(\mathcal{P}_\kappa((\kappa^{+i})^V))^V$. 
  We proceed by induction on $i \leq m$. For $i = 0$, the conclusion is immediate, as it is 
  witnessed by any increasing sequence of ordinals $\langle \mu_\eta \mid \eta < \theta \rangle$ 
  cofinal in $\kappa$. Thus, suppose $i < m$ and we have found $\vec{y}_i$. 
  We will construct $\vec{y}_{i+1}$.

  Fix, in $V$, a sequence $\langle e_\beta \mid \kappa^{+i} \leq \beta < \kappa^{+i+1} \rangle$ such that, for all $\kappa^{+i} \leq \beta < \kappa^{+i+1}$, 
  $e_\beta:\kappa^{+i} \rightarrow \beta$ is a bijection. 
  Suppose that, in $W$, $\langle \beta_\xi \mid \xi < \theta \rangle$ is an increasing sequence of ordinals, 
  cofinal in $(\kappa^{+i+1})^V$, such that $\beta_0 \geq (\kappa^{+i})^V$. For $\xi, \eta < \theta$, 
  let $z_{\xi, \eta} = e_{\beta_\xi}``y_{i, \eta}$, and note that, as $e_{\beta_\xi}, y_{i, \eta} \in V$, 
  we have $z_{\xi, \eta} \in X_{i+1}$.

  \begin{claim} \label{covering_claim}
    For all $\xi_0 < \xi_1 < \theta$ and $\eta_0 < \theta$, there is $\eta_1 < \theta$ such that 
    $z_{\xi_0, \eta_0} \subseteq z_{\xi_1, \eta_1}$.
  \end{claim}

  \begin{proof}
    Let $w = (e_{\beta_{\xi_1}}^{-1})``z_{\xi_0, \eta_0}$. Then $w \in X_i$, so,
    since $\vec{y}_i$ is $\subseteq$-cofinal in $X_i$, there is $\eta_1 < \theta$
    such that $w \subseteq y_{i,\eta_1}$.  But then $z_{\xi_0, \eta_0} \subseteq
    z_{\xi_1, \eta_1}$.
  \end{proof}

  We now construct $\langle y_{i+1, \xi} \mid \xi < \theta \rangle$ by recursion on $\xi$ such that, 
  for each $\xi < \theta$, there is $\eta_\xi < \theta$ such that $y_{i+1, \xi} = z_{\xi, \eta_\xi}$. Let 
  $y_{i+1, 0} = z_{0,0}$. If $\xi < \theta$ and we have constructed $\langle y_{i+1, \zeta} \mid \zeta < \xi \rangle$,
  let $\eta^* = \max(\xi, \sup(\{\eta_\zeta \mid \zeta < \xi\}))$, and
  use Claim \ref{covering_claim} to find $\eta_\xi < \theta$ sufficiently large so that, for all $\zeta < \xi$ and 
  $\eta < \eta^*$, $z_{\zeta, \eta} \subseteq z_{\xi, \eta_\xi}$, and let $y_{i+1, \xi} = z_{\xi, \eta_\xi}$. 
  It is easily verified that $\vec{y}_{i+1}$ is $\subseteq$-increasing. To check that it is 
  cofinal, fix $x \in X_{i+1}$. Since $\kappa^{+i+1}$ is regular in $V$, there is $\xi < \theta$ 
  such that $x \subseteq \beta_\xi$. It follows that there is $\eta < \theta$ 
  such that $x \subseteq z_{\xi, \eta}$. But then, for $\xi^* > \max\{\xi, \eta\}$, 
  our construction guarantees $x \subseteq y_{i+1, \xi^*}$.
\end{proof}

\section{Fat trees and outside guessing} \label{guessing_section}

In this section, we prove Theorem \ref{fat_tree_guessing_theorem}. 
We start with the following lemma, which provides some motivation for the consideration of fat trees.

\begin{lemma} \label{fat_tree_model_lemma}
  Suppose that $\kappa$ is a regular, uncountable cardinal, $m < \omega$, and $\mathcal{C}$ is a club in 
  $\mathcal{P}_\kappa(\kappa^{+m})$. For $i \leq m$, let $\lambda_i = \kappa^{+m-i}$. Then there is 
  a fat tree $T$ of type $(\kappa, \langle \lambda_0, \ldots, \lambda_m \rangle)$ such that, 
  for every $\sigma \in T$ with $\lh(\sigma) = m+1$, there is $x \in \mathcal{C}$ such that, 
  for all $i \leq m$, $\sup(x \cap \lambda_i) = \sigma(i)$.
\end{lemma}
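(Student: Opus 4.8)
The plan is to build the tree $T$ by recursion on levels, peeling off one coordinate of the sets in $\mathcal{C}$ at a time, exploiting the well-known fact that a club in $\mathcal{P}_\kappa(\kappa^{+m})$ restricted to a fixed ``suprema below the top'' behaves like a club at the next cardinal down. Concretely, I would pass to the subclub $\mathcal{C}' \subseteq \mathcal{C}$ consisting of those $x$ that are closed under a fixed Skolem function for a structure on $\kappa^{+m}$ coding the relevant bijections $e_\beta : \kappa^{+m-1} \to \beta$ for $\beta < \kappa^{+m}$; by shrinking further I may assume every $x \in \mathcal{C}'$ satisfies $x \cap \kappa \in \kappa$ and $\cf(\sup(x \cap \lambda_i)) = \kappa_x := x \cap \kappa$ for each $i \le m$, so in particular $\mathrm{otp}(x \cap \lambda_i)$ has uniform cofinality. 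For $\sigma$ of length $k \le m$, declare $\sigma \in T$ iff there is $x \in \mathcal{C}'$ with $\sup(x \cap \lambda_i) = \sigma(i)$ for all $i < k$; for length $m+1$, require the same for all $i \le m$. Condition (1) of the definition of fat tree is then immediate from the definition.

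The crux is verifying condition (2): for $\sigma \in T$ of length $k \le m$, $\mathrm{succ}_T(\sigma)$ is $(<\kappa)$-club in $\lambda_k = \kappa^{+m-k}$. Fix a witness $x_0 \in \mathcal{C}'$ for $\sigma$. The point is that, using the bijection $e_{\sigma(k-1)} : \kappa^{+m-k+1} \to \sigma(k-1)$ (when $k \ge 1$; for $k=0$ this step is vacuous and we work directly in $\kappa^{+m}$), the collection of $x \in \mathcal{C}'$ extending the ``partial specification'' $\sigma$ projects, via the $e_\beta$'s, down to a club in $\mathcal{P}_\kappa(\lambda_k)$, and $\mathrm{succ}_T(\sigma)$ is exactly the set of values $\sup(x \cap \lambda_k)$ attained by such $x$. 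Unboundedness in $\lambda_k$: given $\delta < \lambda_k$, build an $\prec$-increasing $\kappa$-chain of elementary submodels through a point above $\delta$ agreeing with $x_0$ on the first $k$ coordinates, and take the union — its intersection with $\kappa^{+m}$ lies in $\mathcal{C}'$ and has $k$-th coordinate sup above $\delta$. For $(<\kappa)$-closure: suppose $\mu = \sup(\mathrm{succ}_T(\sigma) \cap \mu)$ with $\mathrm{cf}(\mu) = \rho < \kappa$ regular. Pick $x_n \in \mathcal{C}'$ ($n < \rho$) witnessing successors cofinal in $\mu$ and all agreeing with $\sigma$ below coordinate $k$; since $\mathcal{C}'$ is closed under unions of $\subseteq$-increasing chains of length $< \kappa$, we may take the union $\bigcup_n x_n \in \mathcal{C}'$, whose $k$-th coordinate sup is exactly $\mu$ (here the uniform-cofinality arrangement and $\rho < \kappa$ ensure the sup is attained without overshoot in the lower coordinates being disturbed), so $\mu \in \mathrm{succ}_T(\sigma)$.

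The main obstacle I anticipate is the bookkeeping in coordinating the elementary submodels so that extending $\sigma$ to length $m+1$ is always possible while keeping all the lower coordinates fixed, i.e.\ showing that any $\sigma \in T$ of length $\le m$ genuinely extends to a branch of length $m+1$; this is really the statement that the witnessing set $\mathcal{C}'$-closure propagates upward through the tree, and it must be maintained simultaneously with condition (2). I would handle it by proving the stronger statement that for every $k \le m+1$ and every $\sigma \in \prod_{i<k} \lambda_i$, $\sigma \in T$ iff there is $x \in \mathcal{C}'$ with $\sup(x \cap \lambda_i) = \sigma(i)$ for all $i < k$, and then condition (2) together with this characterization gives the extension property for free: any witness $x$ for $\sigma$ of length $\le m$ also witnesses $\sigma ^\frown \langle \sup(x \cap \lambda_k) \rangle \in T$. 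Finally, the last clause of the lemma — that each length-$(m+1)$ node comes from some $x \in \mathcal{C}$ — is then just the definition of $T$ at the top level, using $\mathcal{C}' \subseteq \mathcal{C}$.
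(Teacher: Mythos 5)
Your plan defines $T$ \emph{extensionally}: $\sigma\in T$ iff some single small witness $x\in\mathcal{C}'$ has $\sup(x\cap\lambda_i)=\sigma(i)$ for $i<\lh(\sigma)$. The genuine gap is in verifying condition (2) for such a tree, and both of the arguments you sketch break down at the same point: you must produce, for a node $\sigma$ of length $k$, witnesses whose suprema at the \emph{larger} cardinals $\lambda_i$ ($i<k$) stay frozen at $\sigma(i)$ while the supremum at $\lambda_k$ runs through a $(<\kappa)$-club. For unboundedness, taking a Skolem hull of $x_0$ together with new ordinals below $\lambda_k$ above $\delta$ will in general push the suprema at the $\lambda_i$, $i<k$, strictly above $\sigma(i)$; nothing in the ``uniform cofinality'' shrinking prevents this, and you give no mechanism for it. For $(<\kappa)$-closure, you take $\bigcup_{n<\rho}x_n$ of witnesses realizing suprema cofinal in $\mu$, but clubs in $\mathcal{P}_\kappa(\kappa^{+m})$ (equivalently, closure under a function $F$ of finite arity) are only closed under unions of $\subseteq$-\emph{increasing} chains, and your witnesses are not nested; arranging them to be nested with prescribed $\lambda_k$-suprema is exactly the unsolved coordination problem again. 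Indeed, since a node of your tree is backed by only one $<\kappa$-sized witness, its successor set can in general be bounded in $\lambda_k$ (a club $\mathcal{C}$ demanding closure under a fixed increasing map $f:\kappa\to\kappa^+$ already ties $\sup(x\cap\kappa^{+})$ to $x\cap\kappa$ and kills many nodes' extendibility), so the statement you propose to prove --- that the extensional tree is fat --- is not merely unproven but false in general.

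The paper's proof supplies precisely the missing device: it builds $T$ by recursion on levels \emph{together with} auxiliary sets $y_\sigma$, where $y_\sigma$ is $F$-closed, has size $\kappa^{+m-\lh(\sigma)}$, satisfies $y_\sigma\cap\kappa^{+m+1-\lh(\sigma)}\in\kappa^{+m+1-\lh(\sigma)}$, and realizes the suprema $\sigma(i)$ at the earlier coordinates. The successors of $\sigma$ are then defined as the (club of) ordinals $\alpha\in S^{\lambda_{\lh(\sigma)}}_{<\kappa}$ at which a continuous, $F$-closed filtration $\langle z_{\sigma,\alpha}\rangle$ of $y_\sigma$ satisfies $z_{\sigma,\alpha}\cap\lambda_{\lh(\sigma)}=\alpha$, with $z_{\sigma,0}$ already cofinal in each $\sigma(i)$; this is what lets the next coordinate vary over a $(<\kappa)$-club while the earlier suprema stay fixed, and the top-level sets $y_\sigma$ land in $\mathcal{C}_F\subseteq\mathcal{C}$. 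To repair your approach you would need to prove, by a simultaneous recursion, that every node admits a \emph{large} ($\lambda_k$-sized, $F$-closed) witness with the right suprema --- at which point you have reproduced the paper's construction rather than avoided it.
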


\begin{proof}
  Fix a function $F:[\kappa^{+m}]^{<\omega} \rightarrow \mathcal{P}_\kappa(\kappa^{+m})$ such that 
  \[
    \mathcal{C}_F := \{x \in \mathcal{P}_\kappa(\kappa^{+m}) \mid x \cap \kappa \in \kappa\mbox{ and }
    \forall z \in [x]^{<\omega}, ~ F(z) \subseteq x\} \subseteq \mathcal{C}.
  \]
  By recursion on $i \leq m+1$, we will 
  construct $T_i := \{\sigma \in T \mid \lh(\sigma) = i\}$, arranging that, for all $\sigma \in T$ 
  and $k < \lh(\sigma)$, we have $\omega \leq \cf(\sigma(k)) < \kappa$. Simultaneously, we will construct 
  sets $y_\sigma$ for $\sigma \in T$ satisfying:
  \begin{enumerate}
    \item for all $i \leq m+1$ and all $\sigma \in T_i$, we have $y_\sigma \in \mathcal{P}_{\kappa^{+m+1-i}}(\kappa^{+m})$ 
      and, moreover, if $i \leq m$, then $|y_\sigma| = \kappa^{+m-i}$;
    \item for all $i \leq m+1$ and all $\sigma \in T_i$, the following hold:
      \begin{itemize} 
        \item for all $z \in [y_\sigma]^{<\omega}$, we have $F(z) \subseteq y_\sigma$;  
        \item $y_\sigma \cap \kappa^{+m+1-i} \in \kappa^{+m+1-i}$;
        \item for all $k < i$, $\sup(y_\sigma \cap \kappa^{+m-k}) = \sigma(k)$;
      \end{itemize}
    \item for all $i \leq m$ and all $\sigma \in T_i$, we have that $\langle y_{\sigma ^\frown \langle \alpha \rangle} \mid 
      \alpha \in \mathrm{succ}_T(\sigma) \rangle$ is $\subseteq$-increasing and $(<\kappa)$-continuous, 
      and $\bigcup_{\alpha \in \mathrm{succ}_T(\sigma)} y_{\sigma ^\frown \langle \alpha \rangle} = y_\sigma$.
  \end{enumerate}

  To start, let $T_0 = \{\emptyset\}$ and $y_\emptyset = \kappa^{+m}$. Next, suppose $i \leq m$ and we have constructed 
  $T_i$ and $\{y_\sigma \mid \sigma \in T_i\}$ satisfying the recursion requirements listed above. We will construct 
  $T_{i+1}$ and $\{y_\sigma \mid \sigma \in T_{i+1}\}$.

  Fix $\sigma \in T_i$. $|y_\sigma| = \kappa^{+m-i}$, so we can find a $\subseteq$-increasing, continuous sequence 
  $\langle z_{\sigma, \alpha} \mid \alpha < \kappa^{+m-i} \rangle$ such that:
  \begin{itemize}
    \item for all $\alpha < \kappa^{+m-i}$, $z_{\sigma, \alpha} \in \mathcal{P}_{\kappa^{+m-i}}(y_\sigma)$ and, 
      moreover, if $i < m$, we additionally have $|z_{\sigma, \alpha}| = \kappa^{+m-i-1}$;
    \item $\bigcup_{\alpha < \kappa^{+m-i}} z_{\sigma, \alpha} = y_\sigma$;
    \item for all $k < i$, $\sup(z_{\sigma, 0} \cap \kappa^{+m-k}) = \sigma(k) = \sup(y_\sigma \cap \kappa^{+m-k})$;
    \item for all $\alpha < \kappa^{+m-i}$, $z_{\sigma, \alpha} \cap \kappa^{+m-i} \in \kappa^{+m-i}$;
    \item for all $\alpha < \kappa^{+m-i}$ and all $w \in [z_{\sigma, \alpha}]^{<\omega}$, we have 
      $F(w) \subseteq z_{\sigma, \alpha}$.
  \end{itemize}
  Let $\mathrm{succ}_T(\sigma) = \{\alpha \in S^{\kappa^{+m-i}}_{<\kappa} \mid z_{\sigma, \alpha} \cap 
  \kappa^{+m-i} = \alpha\}$ and, for $\alpha \in \mathrm{succ}_T(\sigma)$, let $y_{\sigma ^\frown \langle \alpha \rangle} 
  = z_{\sigma, \alpha}$. This completes the definition of $T_{i+1}$ and $\{y_\sigma \mid \sigma \in T_{i+1}\}$; it is 
  straightforward to verify that the recursion hypotheses have been maintained. It is now immediate that, 
  for all $\sigma \in T_{m+1}$, we have $y_\sigma \in \mathcal{C}_F \subseteq \mathcal{C}$ and, for all $i \leq m$, we have 
  $\sup(y_\sigma \cap \lambda_i) = \sigma(i)$.
\end{proof}

We are now ready to prove Theorem \ref{fat_tree_guessing_theorem}.  The main idea of the proof is that,
working in $V$, we exploit the regularity of $\kappa$ to produce structures that
give rise to PCF-theoretic objects in $W$.  The conclusion comes from a careful
analysis of these PCF-theoretic objects.

\begin{proof}[Proof of Theorem \ref{fat_tree_guessing_theorem}]
  Work first in $V$. Let $X = (\mathcal{P}_\kappa(\lambda))^V$. We first introduce some notation. 
  If $i \leq m, ~ f:X \rightarrow \lambda_i$, 
  and $D \subseteq \lambda_i$ is unbounded, let $f^D:X \rightarrow \lambda_i$ be defined by $f^D(x) = 
  \min(D \setminus f(x))$ for all $x \in X$. Fix a sequence $\langle e_\beta \mid \beta < \lambda^+ \rangle$ 
  such that, for all $\beta < \lambda^+$, $e_\beta:\beta \rightarrow \lambda$ is injective. For all $\alpha < \lambda^+$ 
  and all $\sigma \in T(\alpha)$ such that $\lh(\sigma) \leq m$, let $D_{\alpha, \sigma} = \mathrm{succ}_{T(\alpha)}(\sigma)$. 
  $D_{\alpha, \sigma}$ is thus $(<\kappa)$-club in $\lambda_{\lh(\sigma)}$.

  For all $i \leq m$, define a sequence of functions $\vec{f}_i = \langle f_{i, \beta} \mid \beta < \lambda^+ \rangle$ satisfying 
  the following requirements:
  \begin{enumerate}
    \item for all $\beta < \lambda^+, ~ f_{i, \beta}:X \rightarrow \lambda_i$;
    \item for all $\beta < \gamma < \lambda^+$ and all $x \in X$, if $e_\gamma(\beta) \in x$, then 
      $f_{i, \beta}(x) < f_{i, \gamma}(x)$;
    \item for all $\gamma \in S^{\lambda^+}_{<\kappa}$, there is a club $c_\gamma \subseteq \gamma$ such that
      $\sup\{f_{i, \beta} \mid \beta \in c_\gamma\} < f_{i, \gamma}$;
    \item for all $\alpha, \beta < \lambda^+$, there is $\gamma < \lambda^+$ such that, for all $x \in X$ and 
      all $\sigma \in T(\alpha)_i$ such that $\mathrm{range}(\sigma) \subseteq x$, we have $f^{D_{\alpha, \sigma}}_{i, \beta}(x) 
      < f_{i, \gamma}(x)$.
  \end{enumerate}
  The construction is straightforward, by recursion on $\beta < \lambda^+$.

  Move now to $W$. Let $\theta = |Y|$, and let $\mu = (\lambda^+)^V$. Note that, by condition (4) in the statement 
  of the theorem, $\mu = |\kappa|^+$ and, for every $V$-regular cardinal $\epsilon \in [\kappa, \lambda]$, 
  $\cf^W(\epsilon) \leq \theta$.

  We will define $\langle \delta_{i,y} \mid i \leq m, ~ y \in Y \rangle$ by recursion on $i$. Thus, suppose 
  $i \leq m$ and we have defined $\langle \delta_{j, y} \mid j < i, ~ y \in Y \rangle$. For $y \in Y$, 
  let $y_i = y \cup \{\delta_{j,y} \mid j < i\}$, and let $Y_i = \{y_i \mid y \in Y\}$. The following are 
  easily verified:
  \begin{itemize}
    \item $Y_i$ is a $\subseteq$-directed subset of $X$ and $\bigcup Y_i = \lambda$;
    \item if $I$ and $I_i$ are the non-$\subseteq$-cofinal ideals on $Y$ and $Y_i$, respectively, then, for all 
      $Z \subseteq Y$, $Z \in I$ iff $\{y_i \mid y \in Z\} \in I_i$.
  \end{itemize}
  Define a sequence of functions $\vec{g}_i = \langle g_{i, \beta} \mid \beta < \mu \rangle$ from $Y$ to $\lambda_i$ 
  by letting $g_{i, \beta}(y) = f_{i, \beta}(y_i)$ for all $\beta < \mu$ and $y \in Y$. By the requirements placed on 
  $\vec{f}_i$, we have the following:
  \begin{itemize}
    \item $\vec{g}_i$ is $<_I$-increasing;
    \item for all $\gamma < \mu$ such that $\cf^W(\beta) > \theta$, there is a club $c_\gamma \subseteq \gamma$ such 
      that $\sup\{g_{i, \beta} \mid \beta \in c_\gamma\} < g_{i, \gamma}$.
  \end{itemize}
  Therefore, by Lemma \ref{club_guessing_lemma}, $\vec{g}_i$ has an eub, $h_i$, such that, 
  for all $W$-regular $\nu$ such that $\theta < \nu < \nu^{++} < \mu$, 
  there is $y_{i, \nu} \in Y$ such that, for all $y \in Y / y_{i,\nu}$, we have $\cf(h_i(y)) > \nu$.
  We may assume that, for all $y \in Y$, $\cf(h_i(y)) > \theta$, which implies that $\cf^V(h_i(y)) < \kappa$ 
  and, in turn, $h_i(y) < \lambda_i$. For all $y \in Y$, let $\delta_{i, y} = h_i(y)$.

  We claim that $\langle \delta_{i,y} \mid i \leq m, ~ y \in Y \rangle$ is as
  desired. Requirement (ii) is immediate, 
  so it remains to verify (i). Suppose $\alpha < \mu$ is a counterexample to (i), and let $k \leq m$ be least 
  such that there is no $y \in Y$ such that, for all $z \in Y / y$, $\langle \delta_{i,z} \mid i \leq k \rangle \in T_\alpha$.
  For $z \in Y$, let $\sigma_z = \langle \delta_{i,z} \mid i < k \rangle$. By the minimality of $k$, 
  we can fix $y^* \in Y$ such that, for all $z \in Y / y^*$, $\sigma_z \in T_\alpha$. 
  Since there is no $y \in Y$ such that $\langle \delta_{i,z} \mid i \leq k \rangle \in T_\alpha$ for all $z \in Y / y$, 
  we can fix a set $A \in I^+$ (i.e., a $\subseteq$-cofinal set $A \subseteq Y$) such that, for all $z \in A, ~ 
  \langle \delta_{i,z} \mid i \leq k \rangle \not\in T_\alpha$.
  By shrinking $A$ if necessary, we may assume that $A \subseteq Y / y^*$, which implies that, for all $z \in A, ~ 
  \delta_{k,z} \not\in D_{\alpha, \sigma_z}$.

  Since, for all $z \in A$, $\cf^V(\delta_{k,z}) < \kappa$ and $D_{\alpha, \sigma_z}$ is $(<\kappa)$-club in $\lambda_k$, 
  we have $\sup(D_{\alpha, \sigma_z} \cap \delta_{k,z}) < \delta_{k,z}$. Define a function $\bar{h}: Y \rightarrow \lambda_k$ 
  by letting, for each $y \in Y$:
  \[
    \bar{h}(y) = \begin{cases} \sup(D_{\alpha, \sigma_z} \cap \delta_{k,z}) & \mbox{ if } z \in A \\
      0 & \mbox{ if } z \not\in A \end{cases}
  \]
  $\bar{h} < h_k$, so, since $h_k$ is an eub for $\vec{g}_k$, there is $\beta <
\mu$ such that $\bar{h} <_I g_{k,\beta}$.
  Let $\gamma < \mu$ be as given in requirement (4) applied to $\alpha$ and $\beta$ in the construction of 
  $\vec{f}_k$. Note that, for all $z \in A$, $\mathrm{range}(\sigma_z) \subseteq
z_k= z \cup \{ \delta_{j,z} \mid j <k\}$. Therefore, for all $z \in A$, 
  $\min(D_{\alpha, \sigma_z} \setminus g_{k,\beta}(z)) < g_{k,\gamma}(z)$. 

  Fix $y' \supseteq y^*$ such that, for all $z \in Y / y', ~ \bar{h}(z) < g_{k,\beta}(z)$. Then, for all 
  $z \in A / y'$, we have $\min(D_{\alpha, \sigma_z} \setminus (\bar{h}(z) + 1)) \leq \min(D_{\alpha, \sigma_z} \setminus g_{k,\beta}(z)) < g_{k,\gamma}(z)$. 
  However, by our definition of $\bar{h}$, we have that, for all $z \in A$, $\delta_{k,z} < \min(D_{\alpha, \sigma_z} \setminus (\bar{h}(z) + 1))$. 
  Therefore, for all $z \in A / y'$, we have $h_k(z) < g_{k,\gamma}(z)$, contradicting the fact that 
  $h_k$ is an eub for $\vec{g}_k$.
\end{proof}

We mention now some specific consequences of Theorem \ref{fat_tree_guessing_theorem}. 
The first is a generalization of Theorem \ref{dzamonja_shelah_theorem}.

\begin{corollary} \label{ms_cor}
  Suppose that:
  \begin{enumerate}
    \item $V$ is an inner model of $W$;
    \item in $V$, $\theta < \kappa \leq \lambda$ are regular cardinals;
    \item in $W$, $(\lambda^+)^V$ remains a cardinal and, for all $V$-regular cardinals 
      $\nu \in [\kappa, \lambda]$, $\cf(\nu) = \theta < |\kappa| = |\lambda|$;
    \item in $V$, $\langle C_\alpha \mid \alpha < \lambda^+ \rangle$ is a sequence of clubs in $\kappa$;
    \item one of the following holds:
      \begin{enumerate}
        \item $\lambda < \kappa^{+\omega}$;
        \item $\theta = \omega$ and $\lambda < \kappa^{+\omega_1}$.
      \end{enumerate}
  \end{enumerate}
  Then, in $W$, there is a sequence of ordinals $\langle \delta_i \mid i < \theta \rangle$ such that:
  \begin{enumerate}
    \item[(i)] for all $\alpha < (\lambda^+)^V$, for all sufficiently large $i < \theta$, we have $\delta_i \in C_\alpha$;
    \item[(ii)] for all $\gamma < |\kappa|$, for all sufficiently large $i < \theta$, we have $\cf(\delta_i) > \gamma$.
  \end{enumerate}
\end{corollary}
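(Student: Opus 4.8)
The plan is to derive the corollary from Theorem~\ref{fat_tree_guessing_theorem} in the degenerate case $m = 0$, $\lambda_0 = \kappa$. First, recast the club sequence as a sequence of fat trees: for $\alpha < (\lambda^+)^V$, put $T(\alpha) = \{\emptyset\} \cup \{\langle \gamma \rangle \mid \gamma \in C_\alpha\}$. A club in the regular cardinal $\kappa$ is closed below $\kappa$, hence $(<\kappa)$-closed, and unbounded, so each $T(\alpha)$ is a fat tree of type $(\kappa, \langle \kappa \rangle)$, and $\langle \gamma \rangle \in T(\alpha)$ iff $\gamma \in C_\alpha$. Theorem~\ref{fat_tree_guessing_theorem} will then produce ordinals $\langle \delta_{0, y} \mid y \in Y \rangle$, and the sequence we want will be obtained by re-indexing these along a cofinal $\subseteq$-chain in $Y$.

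Second, I must produce, in $W$, a $\subseteq$-directed $Y \subseteq (\mathcal{P}_\kappa(\lambda))^V$ with $\bigcup Y = \lambda$ and $|Y|^{+3} < (\lambda^+)^V$, and moreover I want $Y$ to be a cofinal $\subseteq$-chain of order type $\theta$ (an arbitrary enumeration of a merely finitely directed set would not be good enough for the re-indexing). If condition~(5)(a) holds, write $\lambda = \kappa^{+n}$ and apply Lemma~\ref{second_covering_lemma} with $m = n$: each $\kappa^{+i}$ for $i \leq n$ is $V$-regular and lies in $[\kappa, \lambda]$, so $\cf^W(\kappa^{+i}) = \theta$ by hypothesis~(3), and the lemma supplies a $\subseteq$-increasing, $\subseteq$-cofinal sequence $\langle y_{n, \eta} \mid \eta < \theta \rangle$ from $(\mathcal{P}_\kappa(\lambda))^V$; take $Y = \{y_{n, \eta} \mid \eta < \theta\}$, noting that $\subseteq$-cofinality forces $\bigcup Y = \lambda$. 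If condition~(5)(b) holds, so $\theta = \omega$ and $\lambda < \kappa^{+\omega_1} = \kappa^{+\theta^+}$, apply Lemma~\ref{first_covering_lemma} with $(\lambda^+)^V$ in place of its $\lambda$ (its hypothesis reduces to~(3), since the $V$-regular cardinals in $[\kappa, (\lambda^+)^V)$ are exactly those in $[\kappa, \lambda]$) to get such a $Y_\lambda$, and use its ``moreover'' clause, available since $\theta = \omega$, to arrange $(Y, \subseteq) \cong (\omega, <)$. The case $\kappa = \lambda$ falls under~(5)(a) with $n = 0$, where $Y$ is simply a cofinal $\theta$-sequence of ordinals in $\kappa$; since Theorem~\ref{fat_tree_guessing_theorem} is phrased only for $\kappa < \lambda$, here one instead runs its proof directly, with $X$ taken to be the bounded subsets of $\kappa$.

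The one step with genuine content in verifying hypothesis~(4) of Theorem~\ref{fat_tree_guessing_theorem} is the bound $|Y|^{+3} \leq \theta^{+3} < (\lambda^+)^V$. I would get this from Lemma~\ref{shelah_lemma}: since $\lambda$ is $V$-regular and $(\lambda^+)^V$ remains a cardinal in $W$, we have $\cf^W(\lambda) = \cf^W(|\lambda|)$, and since $\lambda$ is a $V$-regular cardinal in $[\kappa, \lambda]$, hypothesis~(3) gives $\cf^W(\lambda) = \theta$. Thus $\cf^W(|\kappa|^W) = \cf^W(|\lambda|^W) = \theta < |\kappa|^W$, so $|\kappa|^W$ is a singular --- hence limit --- cardinal in $W$, and therefore $\theta^{+3} < |\kappa|^W \leq \lambda < (\lambda^+)^V$. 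The same observation handles clause~(ii): given $\gamma < |\kappa|^W$, the fact that $|\kappa|^W$ is a limit cardinal lets us pick a regular $W$-cardinal $\nu$ with $\gamma < \nu$ and $\nu^{+2} < |\kappa|^W$, so $\nu^{+2} < (\lambda^+)^V$ and Theorem~\ref{fat_tree_guessing_theorem}(ii) gives $y \in Y$ with $\cf(\delta_{0, z}) > \nu > \gamma$ for all $z \in Y/y$.

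Finally, applying Theorem~\ref{fat_tree_guessing_theorem} with $m = 0$ yields $\langle \delta_{0, y} \mid y \in Y \rangle$; enumerate the cofinal chain $Y$ as $\langle y_\eta \mid \eta < \theta \rangle$ and set $\delta_\eta = \delta_{0, y_\eta}$. For clause~(i): given $\alpha < (\lambda^+)^V$, the theorem furnishes $y \in Y$ with $\langle \delta_{0, z} \rangle \in T(\alpha)$, that is $\delta_{0, z} \in C_\alpha$, for all $z \in Y / y$; as the chain is cofinal, $y_\eta \supseteq y$ for all sufficiently large $\eta$, so $\delta_\eta \in C_\alpha$ for all sufficiently large $\eta$. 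Clause~(ii) follows in the same way from the previous paragraph. I expect the main obstacle to be the bookkeeping around the directed set --- specifically, securing that it can be taken to be a cofinal $\subseteq$-chain of order type $\theta$, which is exactly what forces the split into the two sub-cases of~(5) and the appeal to both covering lemmas --- together with the short but essential arithmetic point that Lemma~\ref{shelah_lemma} makes $|\kappa|^W$ a limit cardinal and so opens up the room that hypothesis~(4) of Theorem~\ref{fat_tree_guessing_theorem} demands.
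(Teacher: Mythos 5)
Your proposal is correct and follows essentially the same route as the paper's own proof: apply Lemma~\ref{shelah_lemma} to see that $|\kappa|=|\lambda|$ is singular of cofinality $\theta$ in $W$, use Lemma~\ref{second_covering_lemma} in case (5)(a) and Lemma~\ref{first_covering_lemma} (with its ``moreover'' clause) in case (5)(b) to get a $\subseteq$-increasing cofinal $\theta$-chain in $(\mathcal{P}_\kappa(\lambda))^V$, and then invoke Theorem~\ref{fat_tree_guessing_theorem} with $m=0$, reading each club $C_\alpha$ as a fat tree of type $(\kappa,\langle\kappa\rangle)$. You merely spell out details the paper leaves implicit (the club-to-fat-tree translation, the verification that $\theta^{+3}<(\lambda^+)^V$, and the borderline case $\kappa=\lambda$, where the theorem's proof indeed goes through unchanged), all of which are handled correctly.
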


\begin{proof}
  First note that, by Lemma \ref{shelah_lemma}, $|\kappa|$ is a singular cardinal of cofinality $\theta$ in $W$. By Lemmas 
  \ref{first_covering_lemma} and \ref{second_covering_lemma}, either case (5)(a) or (5)(b) of the hypothesis implies that, 
  in $W$, there is a $\subseteq$-increasing 
  sequence $\langle y_i \mid i < \theta \rangle$ from $(\mathcal{P}_\kappa(\lambda))^V$ such that $\bigcup_{i < \theta} y_i = \lambda$.
  The conclusion now follows immediately from Theorem \ref{fat_tree_guessing_theorem}.
\end{proof}

The next result is a generalization of Theorem \ref{magidor_sinapova_theorem}.

\begin{corollary} \label{magidor_sinapova_cor}
  Suppose that:
  \begin{enumerate}
    \item $V$ is an inner model of $W$;
    \item in $V$, $\theta < \kappa$ are regular cardinals;
    \item $m < \omega$, and $\kappa^{+m+1}$ remains a cardinal in $W$;
    \item for all $i \leq m$, $\cf^W((\kappa^{+i})^V) = \theta < |\kappa|$;
    \item in $V$, $\langle \mathcal{C}_\alpha \mid \alpha < \kappa^{+m+1} \rangle$ is a sequence 
      of clubs in $\mathcal{P}_\kappa(\kappa^{+m})$.
  \end{enumerate}
  Then, in $W$, there is a sequence $\langle x_\eta \mid \eta < \theta \rangle$ such that:
  \begin{enumerate}
    \item[(i)] for all $\alpha < (\kappa^{+m+1})^V$ and all sufficiently large $\eta < \theta$, we have $x_\eta \in \mathcal{C}_\alpha$;
    \item[(ii)] for all $\gamma < |\kappa|$, for all sufficiently large $\eta < \theta$, for all $i \leq m$, we have  
      $\cf^W(\sup(x_\eta \cap (\kappa^{+i})^V)) > \gamma$.
  \end{enumerate}
\end{corollary}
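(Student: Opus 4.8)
The plan is to deduce this from Theorem~\ref{fat_tree_guessing_theorem}, using Lemma~\ref{second_covering_lemma} to produce the directed set $Y$, Lemma~\ref{fat_tree_model_lemma} to convert the clubs into fat trees, and Lemma~\ref{uniqueness_lemma} to convert the resulting guessed sequences of ordinals back into a single sequence of sets. Write $\lambda = (\kappa^{+m})^V$ and $\lambda_i = (\kappa^{+(m-i)})^V$ for $i \leq m$, so each $\lambda_i$ is a regular $V$-cardinal in $[\kappa,\lambda]$. The degenerate case $m = 0$ is disposed of first: there the elements of the club $\mathcal{X} \subseteq \mathcal{P}_\kappa(\kappa)$ of Section~\ref{prelim_section} are just ordinals below $\kappa$, so replacing each $\mathcal{C}_\alpha$ by $\mathcal{C}_\alpha \cap \mathcal{X}$ turns the statement into an instance of Corollary~\ref{ms_cor} with $\lambda = \kappa$, whose conclusion gives the required sequence directly. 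So assume $m \geq 1$ and work in $V$. Fix the club $\mathcal{X}$ attached to a sufficiently large regular $\Upsilon$ (as in Section~\ref{prelim_section}) and replace each $\mathcal{C}_\alpha$ by $\mathcal{C}_\alpha \cap \mathcal{X}$; a sequence witnessing the corollary for these smaller clubs witnesses it for the originals, so we may assume $\mathcal{C}_\alpha \subseteq \mathcal{X}$ for all $\alpha$. For each $\alpha < (\kappa^{+m+1})^V$, Lemma~\ref{fat_tree_model_lemma} yields a fat tree $T(\alpha)$ of type $(\kappa,\langle \lambda_0,\ldots,\lambda_m\rangle)$ such that every $\sigma \in T(\alpha)$ of length $m+1$ is realized by some $x \in \mathcal{C}_\alpha$, meaning $\sup(x \cap \lambda_i) = \sigma(i)$ for all $i \leq m$.

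Now move to $W$. By Lemma~\ref{second_covering_lemma} applied with the given $m$ (its hypotheses hold since $\theta < \kappa$ are regular in $V$ and $\cf^W((\kappa^{+i})^V) = \theta$ for $i \leq m$), there is a $\subseteq$-increasing, $\subseteq$-cofinal sequence $\langle y_\eta \mid \eta < \theta \rangle$ from $(\mathcal{P}_\kappa(\lambda))^V$; put $Y = \{y_\eta \mid \eta < \theta\}$, so $Y$ is $\subseteq$-directed, $\bigcup Y = \lambda$, and $|Y| = \theta$. By Lemma~\ref{shelah_lemma}, $\cf^W(|\kappa|) = \cf^W(\kappa) = \theta$, so $|\kappa|^W$ is a singular, hence limit, cardinal of cofinality $\theta$ that is strictly above $\theta$; therefore $\theta^{+3} < |\kappa|^W \leq \kappa < (\kappa^{+m+1})^V = (\lambda^+)^V$, and hypothesis~(4) of Theorem~\ref{fat_tree_guessing_theorem} holds with this $Y$. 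The remaining hypotheses are immediate, so the theorem supplies ordinals $\delta_{i,\eta} := \delta_{i,y_\eta}$ ($i \leq m$, $\eta < \theta$) such that: (i) for each $\alpha < (\lambda^+)^V$ there is $\eta(\alpha) < \theta$ with $\langle \delta_{i,\eta} \mid i \leq m \rangle \in T(\alpha)$ whenever $\eta \geq \eta(\alpha)$; and (ii) for every $W$-regular $\nu$ with $\nu^{+2} < (\lambda^+)^V$ there is $\eta_\nu < \theta$ with $\cf^W(\delta_{i,\eta}) > \nu$ whenever $\eta \geq \eta_\nu$ and $i \leq m$. Moreover, from the construction in Theorem~\ref{fat_tree_guessing_theorem} we have $\omega < \cf^W(\delta_{i,\eta}) \leq \cf^V(\delta_{i,\eta}) < \kappa$ for all $i,\eta$.

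It remains to define the sets. Fix $\eta_0 \geq \eta(0)$. For $\eta \geq \eta_0$ the sequence $\vec{\delta}^{\,\eta} := \langle \delta_{i,\eta} \mid i \leq m \rangle$ lies in $T(0)$, so by Lemma~\ref{fat_tree_model_lemma} some $x \in \mathcal{C}_0 \subseteq \mathcal{X}$ realizes it, i.e.\ $\sup(x \cap \lambda_i) = \delta_{i,\eta}$ for all $i$; since $\cf^V(\delta_{i,\eta}) > \omega$, Lemma~\ref{uniqueness_lemma} (a true statement of $V$, hence of $W$) shows that $x$ is the \emph{unique} element of $\mathcal{X}$ realizing $\vec{\delta}^{\,\eta}$, and we let $x_\eta$ be this $x$ (and set $x_\eta = \emptyset$ for $\eta < \eta_0$). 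Each $x_\eta$ lies in $\mathcal{X} \subseteq (\mathcal{P}_\kappa(\kappa^{+m}))^V$. For clause~(i) of the corollary: given $\alpha$, any $\eta \geq \max(\eta_0,\eta(\alpha))$ has $\vec{\delta}^{\,\eta} \in T(\alpha)$, so Lemma~\ref{fat_tree_model_lemma} gives $x' \in \mathcal{C}_\alpha \subseteq \mathcal{X}$ realizing $\vec{\delta}^{\,\eta}$, and uniqueness forces $x' = x_\eta$, so $x_\eta \in \mathcal{C}_\alpha$. For clause~(ii): given $\gamma < |\kappa|^W$, let $\nu = (|\gamma|^+)^W$, a regular $W$-cardinal with $\gamma < \nu < |\kappa|^W$ (as $|\kappa|^W$ is a limit cardinal above $\gamma$), so $\nu^{+2} < |\kappa|^W < (\lambda^+)^V$; then for $\eta \geq \max(\eta_0,\eta_\nu)$ and all $i \leq m$ we get $\cf^W(\delta_{i,\eta}) > \nu > \gamma$, and since $\{\sup(x_\eta \cap (\kappa^{+i})^V) \mid i \leq m\} = \{\delta_{i,\eta} \mid i \leq m\}$ by the choice of $x_\eta$, this is exactly condition~(ii).

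I expect the crux to be the last step: recovering a single sequence of \emph{sets} $\langle x_\eta \rangle$ lying in every $\mathcal{C}_\alpha$ for large $\eta$ out of the sequences of ordinals produced by Theorem~\ref{fat_tree_guessing_theorem}. This is precisely where the uniqueness in Lemma~\ref{uniqueness_lemma} does the work, and it relies on the cofinalities $\cf^V(\delta_{i,\eta})$ being uncountable; that in turn is forced by the clause $\cf^W(\delta_{i,\eta}) > \theta \geq \omega$ built into the conclusion of Theorem~\ref{fat_tree_guessing_theorem}, together with the reduction to $\mathcal{C}_\alpha \subseteq \mathcal{X}$, which lets us operate inside one fixed club of the kind covered by Lemma~\ref{uniqueness_lemma}.
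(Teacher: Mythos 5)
Your proof is correct and follows essentially the same route as the paper's: shrink the clubs into the club $\mathcal{X}$, convert them to fat trees via Lemma \ref{fat_tree_model_lemma}, feed the $\subseteq$-increasing sequence from Lemma \ref{second_covering_lemma} into Theorem \ref{fat_tree_guessing_theorem}, and recover the sets $x_\eta$ from the ordinal profiles by the uniqueness of Lemma \ref{uniqueness_lemma}. The only quibble is that, for $m \geq 1$, Lemma \ref{shelah_lemma} should be applied to $(\kappa^{+m})^V$ (whose $V$-successor is the cardinal assumed to survive) rather than to $\kappa$ itself, noting $|\kappa|^W = |(\kappa^{+m})^V|^W$ since the intermediate $V$-successors are collapsed; the conclusion that $|\kappa|$ is singular of cofinality $\theta$ in $W$, and hence the verification of hypothesis (4) and of clause (ii), is unaffected.
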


\begin{proof}
  Work first in $V$. Let $\Upsilon$ be a sufficiently large regular cardinal, and, as in Section \ref{prelim_section}, 
  let 
  \[
  \mathcal{X} := \{M \cap \kappa^{+m} \mid M \prec H(\Upsilon), ~  
  |M| < \kappa,\mbox{ and }M \cap \kappa \in \kappa\}.
  \] 
  By shrinking the clubs if necessary, we may assume that, for all $\alpha < \kappa^{+m+1}$, we have  
  $\mathcal{C}_\alpha \subseteq \mathcal{X}$. For all $\alpha < \kappa^{+m+1}$, let $T(\alpha)$ be the fat tree of type 
  $(\kappa, \langle \kappa^{+m}, \kappa^{+m-1}, \ldots, \kappa \rangle)$ given by Lemma \ref{fat_tree_model_lemma} 
  applied to $\mathcal{C}_\alpha$. 

  By Lemma \ref{second_covering_lemma}, there is, in $W$, a $\subseteq$-increasing sequence $\langle y_\eta \mid \eta < \theta \rangle$ 
  of elements from $(\mathcal{P}_\kappa((\kappa^{+m})^V))^V$ such that $\bigcup_{\eta < \theta} y_\eta = (\kappa^{+m})^V$. 
  Also, by Lemma \ref{shelah_lemma}, $|\kappa|$ is a singular cardinal of cofinality $\theta$ in $W$.
  Therefore, we may apply Theorem \ref{fat_tree_guessing_theorem} to find $\langle \delta_{i, \eta} \mid i \leq m, ~ \eta < \theta \rangle$ 
  such that:
  \begin{itemize}
    \item for all $\alpha < (\kappa^{+m+1})^V$, for all sufficiently large $\eta < \theta, ~ \langle \delta_{i,\eta} \mid i \leq m \rangle 
      \in T(\alpha)$;
    \item for all $\gamma < |\kappa|$, for all sufficiently large $\eta < \theta$, for all $i \leq m, ~ \cf^W(\delta_{i, \eta}) > \gamma$.
  \end{itemize}
  Without loss of generality, we may assume that, for all $\eta < \theta$, there is $x \in \mathcal{X}$ such that, for all $i \leq m$, 
  $\chi_x(m-i) = \delta_{i, \eta}$ and that, for all $\eta < \theta$ and $i \leq m$, $\cf^W(\delta_{i, \eta}) > \omega$. Therefore, 
  by Lemma \ref{uniqueness_lemma}, we in fact have that, for all $\eta < \theta$, there is a \emph{unique} $x \in \mathcal{X}$ such that, 
  for all $i \leq m$, $\chi_x(m-i) = \delta_{i, \eta}$. Let $x_\eta$ be this unique $x$.

  We claim that $\langle x_\eta \mid \eta < \theta \rangle$ is as desired. Requirement (ii) follows immediately from the properties of 
  $\langle \delta_{i, \eta} \mid i \leq m, ~ \eta < \theta \rangle$. To see (i), fix $\alpha < (\kappa^{+m+1})^V$. For all sufficiently large 
  $\eta < \theta$, by Lemma \ref{fat_tree_model_lemma}, the definition of $T(\alpha)$, and the properties of $\langle \delta_{i, \eta} 
  \mid i \leq m, ~ \eta < \theta \rangle$, there is $z \in \mathcal{C}_\alpha$ such that, for all $i \leq m$, $\chi_z(i) = \delta_{i, \eta}$. 
  By the fact that $\mathcal{C}_\alpha \subseteq \mathcal{X}$ and the preceding discussion, there is in fact a unique such $z$, and it is 
  $x_\eta$. Therefore, for all sufficiently large $\eta < \theta$, $x_\eta \in \mathcal{C}_\alpha$.
\end{proof}

\section{Diagonal sequences} \label{diagonal_section}

In this section, we prove abstract results about the existence of diagonal pseudo-Prikry sequences that 
are natural generalizations of the objects added by diagonal Prikry-type forcings, such as the diagonal 
supercompact Prikry forcing from \cite{gitik_sharon}. In order to formulate such results, we need a replacement for the 
notion of ``club'' that is applicable in the context of singular cardinals.

\begin{definition}
  Suppose $\mu$ is a singular cardinal, $\cf(\mu) = \theta$, and $\vec{\mu} = \langle \mu_\xi \mid \xi < \theta \rangle$ 
  is an increasing sequence of regular cardinals, cofinal in $\mu$. We say $\vec{C} = 
  \langle C_\xi \mid \xi < \theta \rangle$ is a \emph{diagonal club in $\vec{\mu}$} if, for every $\xi < \theta$, 
  $C_\xi$ is club in $\mu_\xi$.
\end{definition}

The following lemma is a consequence of work of Cummings
\cite{cummings_collapsing} and a remark of Sharon and Viale \cite{sharon_viale}.
We give a self-contained proof.

\begin{lemma} \label{csv_lemma}
  Suppose that:
  \begin{enumerate}
    \item $V$ is an inner model of $W$;
    \item in $V$, $\mu$ is a singular cardinal and $\theta = \cf(\mu)$;
    \item $(\mu^+)^V$ remains a cardinal in $W$;
    \item $\kappa = \cf^W(\theta)$.
  \end{enumerate}
  Then, in $W$, there is a cardinal $\nu$ and a $k \leq 2$ such that $\cf(\nu) = \kappa$ 
  and $(\mu^+)^V = \nu^{+k+1}$.
\end{lemma}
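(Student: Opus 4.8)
The plan is to work in $W$ and combine the hypotheses with two tools already available: Shelah's lemma (Lemma \ref{shelah_lemma}) and the Magidor–Shelah computation of cofinalities of exact upper bounds (Lemma \ref{small_cf_lemma}), applied to a scale (Theorem \ref{scale_thm}). First I would apply Lemma \ref{shelah_lemma} with $\lambda := \mu$: since $\mu$ is regular in $V$ (it is a cardinal there) — wait, $\mu$ is singular in $V$, so instead I apply Shelah's lemma with the regular cardinal $\mu^+$ of $V$, obtaining that in $W$, $\cf(\mu^+) = \cf(|\mu^+|)$; but the more useful consequence is that $(\mu^+)^V$, being a successor cardinal in $V$, is either still a cardinal of the form $\nu^+$ in $W$ or its $W$-cardinality $|\mu|$ satisfies $|\mu|^+ = (\mu^+)^V$. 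In fact, since $(\mu^+)^V$ remains a cardinal in $W$ by hypothesis (3), and $\mu$ is singular in $V$ of cofinality $\theta$, the key point is to pin down $\cf^W((\mu^+)^V)$ and the cofinality structure of the cardinals just below it.

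The main idea: fix in $V$, via Theorem \ref{scale_thm}, an increasing sequence $\langle \mu_\xi \mid \xi < \theta \rangle$ of regular cardinals cofinal in $\mu$ carrying a scale $\vec{f} = \langle f_\alpha \mid \alpha < \mu^+ \rangle$ in $\prod_{\xi < \theta} \mu_\xi$. Now move to $W$. Since $(\mu^+)^V$ is a regular cardinal in $W$ as well (it's a cardinal, and successor-of-its-predecessor, hence regular — one should check this, but a cardinal of $W$ that was $\mu^+$ of $V$ with $(\mu^+)^V$ preserved is regular in $W$ by a standard argument using $\cf^W$), the scale $\vec{f}$ is $<_I$-increasing of length $\lambda := (\mu^+)^V$ over the index set $X := \theta$, where $I = I_{bd}$ on $\theta$ — except $\theta$ may not be regular in $W$. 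So I would instead index over $\kappa = \cf^W(\theta)$: fix in $W$ an increasing cofinal map $g : \kappa \to \theta$ and consider $\tilde f_\alpha := f_\alpha \circ g$, a $<_{I_{bd}}$-increasing sequence in $\prod_{i < \kappa} \mu_{g(i)}$ of length $(\mu^+)^V$. This sequence is still cofinal (cofinality is preserved under composition with a cofinal map), so in $W$ it is a scale, and $(\mu^+)^V = \cf^W\big(\prod_{i<\kappa} \mu_{g(i)}, <^*\big)$, which forces $(\mu^+)^V$ to be regular in $W$ and to have a genuine eub, namely the function $i \mapsto \mu_{g(i)}$ (or its sup). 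Then Lemma \ref{small_cf_lemma}(1), applied with $\lambda := (\mu^+)^V$ and the eub $h(i) = \sup_{j \le i}\mu_{g(j)}$, gives $\{i < \kappa \mid \cf^W(h(i)) > (\mu^+)^V\} \in I_{bd}$, which is vacuous, and more to the point: the sup of the $\mu_{g(i)}$ is $|\mu|^W =: \nu$, and $\cf^W(\nu) = \kappa$ since $g$ was cofinal into $\theta$ and the $\mu_\xi$ were increasing. It remains to show $(\mu^+)^V = \nu^{+k+1}$ for some $k \le 2$, i.e.\ that $(\mu^+)^V$ is at most the third successor of $|\mu|^W$ in $W$.

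For the last step I would argue as follows. In $W$ we have a scale of length $(\mu^+)^V$ in a product of $\kappa$ regular cardinals cofinal in $\nu = |\mu|^W$; by Lemma \ref{small_cf_lemma}(2), applied with $X$ of size $\kappa$ and $\delta$ ranging over $W$-regular cardinals with $\kappa < \delta < (\mu^+)^V$, the eub function $h$ has $\{i \mid \cf^W(h(i)) = \delta\} \in I_{bd}$ for every such $\delta > \kappa^{?}$ — more precisely, for $\delta$ with $|X|^+ < \delta$, and since $|X| = \kappa$ this rules out $\cf^W(h(i)) = \delta$ for all $W$-regular $\delta \in (\kappa^{+}, (\mu^+)^V)$ on a cobounded set — wait, I need $|X| < \delta$, so $\delta \ge \kappa^+$. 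Combined with part (1), on a cobounded set of $i$ we have $\cf^W(h(i)) \le \kappa$, so $\nu = \sup_i h(i)$ is a limit of $\le \kappa$ cardinals each of $W$-cofinality $\le \kappa$, giving $\cf^W(\nu) \le \kappa$, hence $= \kappa$ as noted. The bound $k \le 2$ then comes from a cardinality count: the scale has length $(\mu^+)^V$ living in $\prod_{i < \kappa}\mu_{g(i)}$, and $\left|\prod_{i<\kappa}\mu_{g(i)}\right|^W \le \nu^\kappa \le 2^\nu$ in $W$; but by a Silver/PCF-type bound in $W$ applied to the scale structure, the length of any scale on a product of $\kappa$ regulars below $\nu$ is at most $\nu^{+|\kappa|^{+}}$-ish — here I would instead use directly that, since $(\mu^+)^V$ is a successor cardinal in $V$, Lemma \ref{shelah_lemma} gives $\cf^W(\mu) = \cf^W(\nu)$, which is automatic, and that the interval of $W$-cardinals strictly between $\nu$ and $(\mu^+)^V$ has order type at most $2$ because otherwise one could find in $W$ a $W$-regular $\delta$ with $\kappa^+ \le \delta \le \delta^{++} < (\mu^+)^V$ and $\nu < \delta$, contradicting that the only cardinals the scale "sees" between $\kappa$ and $(\mu^+)^V$ are cofinalities of the $h(i)$, all $\le \kappa$. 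This last "order type $\le 2$" argument — extracting the exact bound $k \le 2$ rather than merely "some finite $k$" — is the main obstacle; I expect it to require the Abraham–Magidor analysis (Lemma \ref{club_guessing_lemma}) to produce a well-behaved eub and then a careful Lemma \ref{small_cf_lemma} count showing that a third successor below $(\mu^+)^V$ above $\nu$ cannot occur. Everything else (preservation of regularity of $(\mu^+)^V$, computing $\cf^W(\nu) = \kappa$) is routine manipulation of scales and cofinal maps.
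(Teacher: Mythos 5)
There is a genuine gap, and it sits at the heart of your argument: the claim that, after composing with a cofinal map $g:\kappa\to\theta$, the sequence $\langle f_\alpha\circ g\mid\alpha<(\mu^+)^V\rangle$ ``is still cofinal, so in $W$ it is a scale,'' and hence that $i\mapsto\mu_{g(i)}$ is a genuine exact upper bound. Re-indexing preserves being $<^*$-increasing, but cofinality in the product \emph{as computed in $W$} is a completely different matter: $W$ may contain functions in $\prod_{i<\kappa}\mu_{g(i)}$ that dominate every $V$-function, and this is compatible with all the hypotheses of the lemma --- indeed Theorem \ref{diag_thm1.5}(5) of the paper explicitly treats models (diagonal Prikry, extender-based extensions) in which $(\prod_{\xi<\theta}\mu_\xi)^V$ is \emph{bounded} in the $W$-product while $(\mu^+)^V$ is preserved. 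So you cannot get an eub for free; the paper's proof instead first adjusts the scale in $V$ so that every limit $\beta<\mu^+$ carries a club $c_\beta$ with $\sup\{f_\alpha\mid\alpha\in c_\beta\}<^*f_\beta$, transfers this to $W$, and then invokes Lemma \ref{club_guessing_lemma} to \emph{produce} an eub with large pointwise cofinalities. You mention Lemma \ref{club_guessing_lemma} only as an afterthought at the end, and without the club-domination property arranged in $V$ you have no eub to analyze. (Your invocation of Lemma \ref{shelah_lemma} with $\lambda=\mu^+$ is also unjustified, since it would require $(\mu^{++})^V$ to remain a cardinal, but you never use it.)

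Two further steps are incorrect even granting an eub. First, you conflate $\sup_i\mu_{g(i)}$, which is the ordinal $\mu$, with $\nu:=|\mu|^W$: while $\cf^W(\mu)=\kappa$ is trivial, $|\mu|^W$ may be strictly smaller than $\mu$ and there is no reason its cofinality is $\kappa$ --- the conclusion of the lemma deliberately allows $\nu$ to lie up to two cardinal steps \emph{below} $|\mu|^W$ (with $(\mu^+)^V=\nu^{+k+1}$, $k\le 2$), and your attempted direct computation of $\cf^W(|\mu|^W)=\kappa$ would prove the (false in general) $k=0$ case, which then clashes with your later discussion of cardinals strictly between $\nu$ and $(\mu^+)^V$ (there are none if $\nu=|\mu|^W$). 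Second, your use of Lemma \ref{small_cf_lemma}(2) to conclude ``$\cf^W(h(i))\le\kappa$ on a cobounded set'' is invalid: the lemma excludes each single regular $\delta\in(|X|,\lambda)$ only on an $I$-small set, and one cannot take the union over possibly $\nu$-many such $\delta$. The paper applies it only where finitely many regular values remain possible, namely in $[\nu,\lambda]$: this is how it rules out $\lambda=\rho^{+4}$ for $\rho\ge\kappa$ (an eub with pointwise cofinality $>\rho^{+3}$ versus $h(i)<\mu$ and $|\mu|=\rho^{+3}$), concludes $\lambda=\nu^{+k+1}$ with $\nu$ either $\kappa$ or a limit cardinal, and, in the limit case, derives $\cf(\nu)=\kappa$ from the dichotomy on the pointwise cofinalities of the eub. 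These case analyses, not a cardinality count on the product, are what yield the bound $k\le 2$.
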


\begin{proof}
  Work first in $V$. Let $\lambda = \mu^+$. Apply Theorem \ref{scale_thm} to find an increasing 
  sequence of regular cardinals, $\vec{\mu} = \langle \mu_\xi \mid \xi < \theta \rangle$, 
  such that $\vec{\mu}$ is cofinal in $\mu$ and there is a scale in $\prod_{\xi < \theta} \mu_\xi$ of length $\lambda$. 
  Let $\vec{f} = \langle f_\alpha \mid \alpha < \lambda \rangle$ be such a scale. 
  By making adjustments to the scale if necessary, we may assume that, for every limit ordinal $\beta < \lambda$, there 
  is a club $c_\beta \subseteq \beta$ such that $\sup\{f_\alpha \mid \alpha \in c_\beta\} <^* f_\beta$.\footnote{See, 
    for example, the proof of \cite[Theorem 2.21]{abraham_magidor} for details on how to achieve this.}
  
  Move now to $W$. Let $\langle \xi_i \mid i < \kappa \rangle$ be an increasing sequence of ordinals, cofinal in $\theta$, 
  and define a sequence of functions $\vec{g} = \langle g_\alpha \mid \alpha < \lambda \rangle$ from $\kappa$ to $\mu$ 
  by letting $g_\alpha(i) = f_\alpha(\xi_i)$ for all $\alpha < \lambda$ and $i < \kappa$. 

  Suppose first that there is a cardinal $\nu \geq \kappa$ such that $\lambda = \nu^{+4}$. Then, by Lemmas \ref{small_cf_lemma} and \ref{club_guessing_lemma},  
  $\vec{g}$ has an eub, $h$, such that, for all $i < \kappa$, $\cf(h(i)) > \nu^{+3}$. 
  But the function $i \mapsto \mu_{\xi_i}$ is an upper bound for $\vec{g}$, so, for all sufficiently large $i < \kappa$, 
  $h(i) \leq \mu_{\xi_i} < \mu$. This is a contradiction, as $|\mu| = \nu^{+3}$.

  Therefore, as $\lambda > \kappa$ and $\lambda$ is a successor cardinal, there must be a cardinal $\nu$ and a $k \leq 2$ 
  such that $\lambda = \nu^{+k+1}$ and either $\nu = \kappa$ or $\nu$ is a limit cardinal. If $\nu = \kappa$, then we are done, 
  so suppose $\nu$ is a limit cardinal and, for sake of contradiction, $\cf(\nu) \neq \kappa$.
  By Lemma \ref{club_guessing_lemma}, $\vec{g}$ has an eub, $h$, such that, for all $\epsilon < \nu$, for all sufficiently 
  large $i < \kappa$, $\cf(h(i)) > \epsilon$.
  
  Suppose first that there is an unbounded set $A \subseteq \kappa$ such that, for all 
  $i < \kappa$, $\cf(h(i)) < \nu$. Since $\kappa \neq \cf(\nu)$, we may find an unbounded $B \subseteq A$ and an $\epsilon < \nu$ 
  such that, for all $i \in B$, $\cf(h(i)) < \epsilon$, contradicting the fact that, for all sufficiently large $i < \kappa$, 
  $\cf(h(i)) > \epsilon$. 

  Thus, we may assume that, for all sufficiently large $i < \kappa$, $\cf(h(i)) \geq \nu$. Then, by Lemma \ref{small_cf_lemma}, 
  for all sufficiently large $i < \kappa$, $\cf(h(i)) = \nu^{+k+1}$. This leads to a contradiction as before, as we must have 
  $h(i) < \mu$ for all sufficiently large $i < \kappa$ and $|\mu| = \nu^{+k}$.
\end{proof}

We can now prove a diagonal version of Theorem \ref{dzamonja_shelah_theorem}. Note that, in contrast to the situation in that theorem, 
we obtain here a pseudo-Prikry sequence that simultaneously meets \emph{every} diagonal club in $V$, not just every member of a 
predetermined list of $\mu^+$-many diagonal clubs.

\begin{theorem} \label{diag_thm1}
  Suppose that:
  \begin{enumerate}
    \item $V$ is an inner model of $W$;
    \item in $V$, $\mu$ is a singular cardinal and $\theta = \cf(\mu)$;
    \item in $V$, $\vec{\mu} = \langle \mu_\xi \mid \xi < \theta \rangle$ is an increasing sequence 
      of regular cardinals, cofinal in $\mu$, such that there is a scale of length $\mu^+$ in $\prod_{\xi < \theta} \mu_\xi$;
    \item in $W$, $(\mu^+)^V = (\nu)^{+k+1}$ for some singular cardinal $\nu$ and $k \leq 2$;       
    \item in $W$, $\sup(\{\cf(\mu_\xi) \mid \xi < \theta\} \cap \nu\}) < \nu$.
  \end{enumerate}
  Then, in $W$, there is a function $g \in \prod_{\xi < \theta} \mu_\xi$ such that:
  \begin{enumerate}
    \item[(i)] for all $\vec{C} = \langle C_\xi \mid \xi < \theta \rangle \in V$ such that $\vec{C}$ is a diagonal club in $\vec{\mu}$, 
      for all sufficiently large $\xi < \theta$, we have $g(\xi) \in C_\xi$;
    \item[(ii)] for all $\epsilon < \nu$, for all sufficiently large $\xi < \theta$, we have $\epsilon < \cf(g(\xi)) < \nu$.
  \end{enumerate}
\end{theorem}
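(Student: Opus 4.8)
The plan is to mimic the exact-upper-bound analysis from the proof of Lemma~\ref{csv_lemma}, combined with the ``rounding'' argument at the end of the proof of Theorem~\ref{fat_tree_guessing_theorem}. First I would work in $V$ and fix, using hypothesis~(3), a scale $\vec f = \langle f_\alpha \mid \alpha < \lambda\rangle$ in $\prod_{\xi<\theta}\mu_\xi$, with $\lambda := \mu^+$, which after the standard modification (cf.\ \cite[Theorem 2.21]{abraham_magidor} and the proof of Lemma~\ref{csv_lemma}) we may take to have club-guessing at the relevant limit points below $\lambda$. Note that $\xi \mapsto \mu_\xi$ is a $<$-upper bound for $\vec f$. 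The key observation to extract in $V$ is a ``rounding'' property playing the role of requirement~(4) in the proof of Theorem~\ref{fat_tree_guessing_theorem}: since $\vec f$ is $<^*$-cofinal in $\prod_\xi\mu_\xi$, for every diagonal club $\vec C = \langle C_\xi \mid \xi < \theta\rangle \in V$ and every $\beta < \lambda$ the function $\xi \mapsto \min(C_\xi\setminus f_\beta(\xi))$ again lies in $\prod_\xi\mu_\xi$, so there is $\gamma < \lambda$ with $\min(C_\xi\setminus f_\beta(\xi)) < f_\gamma(\xi)$ for all sufficiently large $\xi$ --- here this comes for free from cofinality of the scale rather than being built into the construction.

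Next I would move to $W$, set $\rho = \cf^W(\theta)$, fix an increasing cofinal $\langle \xi_i \mid i < \rho\rangle$ in $\theta$, let $I$ be the bounded ideal on $\rho$, and pass to $\vec g = \langle g_\alpha \mid \alpha < \lambda\rangle$ with $g_\alpha(i) = f_\alpha(\xi_i)$. Then $\vec g$ is $<_I$-increasing, $i \mapsto \mu_{\xi_i}$ is an upper bound, and $\vec g$ inherits the club-guessing property (clubs and $<^*$ being absolute). By Lemma~\ref{shelah_lemma}, $\cf^W(\mu) = \cf^W(\theta) = \rho$ and $|\mu|^W = \nu^{+k}$, so $\cf^W(\nu^{+k}) = \rho$; a short case split on $k \leq 2$ then pins down $\rho$ (for instance $\rho = \cf^W(\nu)$ if $k = 0$, and $\rho = \nu^{+k}$, forcing $\nu < \theta$, if $k \geq 1$).

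The heart of the matter is to extract from $\vec g$ an exact upper bound $h$ --- after possibly shrinking the domain to a subset with bounded complement, to keep the relevant $\mu_{\xi_i}$ large --- whose cofinality function approaches $\nu$ strictly from below, i.e.\ $\{i \mid \cf^W(h(i)) \leq \epsilon\} \in I$ for every $\epsilon < \nu$, while also $\{i \mid \cf^W(h(i)) \geq \nu\} \in I$. For the first clause I would apply Lemma~\ref{club_guessing_lemma} for cofinally many regular $\mu' < \nu$ with $\mu'^{++} < \lambda$ and use uniqueness of exact upper bounds mod $I$ to get a single $h$; the singularity of $\nu$ (hypothesis~(4)) is exactly what makes such $\mu'$ cofinal in $\nu$, and the case $\rho = \nu^{+k} \geq \nu$ (where no such $\mu'$ exists) has to be handled directly, using $\nu < \theta$. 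For the second clause I would use that $h$ is bounded by $i \mapsto \mu_{\xi_i}$ together with hypothesis~(5): where $\cf^W(\mu_{\xi_i}) \geq \nu$, the cofinality restrictions on exact upper bounds from Lemmas~\ref{small_cf_lemma} and~\ref{club_guessing_lemma} apply just as in the case analysis of Lemma~\ref{csv_lemma}; where $\cf^W(\mu_{\xi_i}) < \nu$, hypothesis~(5) bounds these cofinalities uniformly away from $\nu$, which forces $h(i) < \mu_{\xi_i}$ for all large $i$ and pins $\cf^W(h(i))$ below $\nu$. Replacing $h$ by $\min(h, (\mu_{\xi_i})_{i<\rho})$, we may assume $h(i) \leq \mu_{\xi_i}$ for all $i$.

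Finally I would build $g \in \prod_{\xi<\theta}\mu_\xi$ by setting $g(\xi_i) = h(i)$ when $h(i) < \mu_{\xi_i}$ (and $0$ otherwise), and filling in the coordinates $\xi \notin \{\xi_i \mid i < \rho\}$ using the rounding property so as not to spoil club-meeting on a tail. Clause~(ii) is then immediate. For clause~(i), suppose $\vec C \in V$ is a diagonal club and $A := \{i < \rho \mid g(\xi_i) \notin C_{\xi_i}\}$ is unbounded in $\rho$ (the non-$\xi_i$ coordinates being handled by the filling-in). For $i \in A$, since $C_{\xi_i}$ is closed in $\mu_{\xi_i}$, $g(\xi_i) < \mu_{\xi_i}$, and $g(\xi_i) \notin C_{\xi_i}$, we have $\sup(C_{\xi_i}\cap g(\xi_i)) < g(\xi_i)$; let $\bar h(i)$ be this supremum, and $\bar h(i) = 0$ off $A$. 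Then $\bar h <_I h$, so $\bar h <_I g_\beta$ for some $\beta < \lambda$; applying the rounding property to $\vec C$ and $\beta$ gives $\gamma < \lambda$ with $\min(C_{\xi_i}\setminus g_\beta(i)) < g_\gamma(i)$ for all large $i$, while $g_\gamma <_I h$ since $h$ is an upper bound. For large $i \in A$ this yields the chain $g(\xi_i) < \min(C_{\xi_i}\setminus g_\beta(i)) < g_\gamma(i) < g(\xi_i)$, a contradiction --- exactly the contradiction at the end of the proof of Theorem~\ref{fat_tree_guessing_theorem}. I expect the third step to be the main obstacle: arranging a single exact upper bound whose cofinality function lands, on a tail, strictly between any prescribed $\epsilon < \nu$ and $\nu$ is where the precise form of hypothesis~(4) (the bound $k \leq 2$ and singularity of $\nu$) and hypothesis~(5) are essential, and where care is needed with the domain restrictions when invoking Lemma~\ref{club_guessing_lemma}.
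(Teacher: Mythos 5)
Your overall strategy is the paper's strategy: fix in $V$ a scale $\vec f$ on $\vec\mu$ (adjusted to have the club-guessing property at limit points), pass to $W$, extract an exact upper bound whose cofinalities eventually lie strictly between any prescribed $\epsilon<\nu$ and $\nu$ (using Lemmas \ref{small_cf_lemma} and \ref{club_guessing_lemma}, the bound $k\le 2$, and hypothesis (5) to force $g(\xi)<\mu_\xi$), and then derive clause (i) by the rounding argument: the function $\xi\mapsto\min(C_\xi\setminus f_\alpha(\xi))$ lies in $V$, is therefore dominated by some member of the scale, and this contradicts exactness. That last mechanism is exactly what the paper does.

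The genuine problem is your decision to replace the index set $\theta$ by a $W$-cofinal subsequence $\langle \xi_i\mid i<\rho\rangle$ with $\rho=\cf^W(\theta)$ and to run the eub analysis on functions with domain $\rho$. Clause (i) demands that for each diagonal club $\vec C\in V$ there be a single threshold below $\theta$ beyond which \emph{every} coordinate satisfies $g(\xi)\in C_\xi$; if $\rho<\theta$, the coordinates outside $\{\xi_i\mid i<\rho\}$ are cofinal in $\theta$, and your eub $h$ says nothing about them. The proposed ``filling in the coordinates $\xi\notin\{\xi_i\}$ using the rounding property'' does not work as stated: rounding a value into $C_\xi$ requires already knowing $\vec C$, whereas $g$ must be a single function chosen in advance that works for all $\vec C\in V$ simultaneously, and for any fixed candidate value at a fixed coordinate there are $V$-clubs in $\mu_\xi$ avoiding it. So your argument only controls the special coordinates, and nothing rules out a single $\vec C\in V$ being missed at cofinally many non-special coordinates. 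The fix is to drop the detour entirely, which is what the paper does: work with $X=\theta$ and $I$ the ideal of bounded subsets of $\theta$ (a proper ideal in $W$ whether or not $\theta$ stays regular), note that $\vec f$ remains $<_I$-increasing and inherits its club-guessing property in $W$, and apply Lemmas \ref{club_guessing_lemma} and \ref{small_cf_lemma} directly to $\vec f$; then the contradiction argument you describe goes through verbatim on all of $\theta$, with no coordinates left to fill in. This also makes your case analysis ``pinning down $\rho$'' unnecessary (and note that the step $\cf^W(\mu)=\cf^W(|\mu|^W)$ is not justified by Lemma \ref{shelah_lemma}, which requires $\lambda$ regular in $V$). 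Finally, you flag but do not resolve the case in which no regular $\mu'$ with $|X|<\mu'<\mu'^{++}<\lambda$ exists below $\nu$; the paper's proof tacitly assumes the relevant index set has $W$-cardinality below $\nu$, so this is not where your proposal diverges from it, but as written that case remains an admitted gap in your argument.
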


\begin{proof}
  Let $\lambda = (\mu^+)^V$. In $V$, fix a scale $\vec{f} = \langle f_\alpha \mid \alpha < \lambda \rangle$ in 
  $\prod_{\xi < \theta} \mu_\xi$. As in the proof of Lemma \ref{csv_lemma}, we may assume that, for every limit ordinal $\beta < \lambda$, there is a club 
  $c_\beta \subseteq \beta$ such that $\sup\{f_\alpha \mid \alpha \in c_\beta\} <^* f_\beta$.
  
  Move to $W$. By now-familiar arguments, $\vec{f}$ has an eub $g$ such that, for all $\epsilon < \nu$, for all sufficiently large 
  $\xi < \theta$, we have $\epsilon < \cf(g(\xi))$. Suppose first that $A:=\{\xi < \theta \mid \cf(g(\xi)) \geq \nu\}$ is unbounded 
  in $\theta$. Then, by restricting all functions in $\vec{f}$ to $A$, Lemma \ref{small_cf_lemma} implies that, for 
  sufficiently large $\xi \in A$, $\cf(g(\xi)) = \lambda$. This is a contradiction, since the function $\xi \mapsto \mu_\xi$ 
  is an upper bound for $\vec{f}$ and, for all $\xi < \lambda$, we have $\mu_\xi < \lambda$. Therefore, we may assume that, 
  for all $\xi < \theta$, we have $\cf(g(\xi)) < \nu$. Since $\sup(\{\cf(\mu_\xi) \mid \xi < \theta\} \cap \nu\}) < \nu$, we may 
  assume further that, for all $\xi < \theta$, $g(\xi) < \mu_\xi$. We claim that $g$ is as desired. We have already shown 
  that $g$ satisfies requirement (ii), so it remains to verify (i). 

  To this end, let $\vec{C} = \langle C_\xi \mid \xi < \theta \rangle \in V$ be a diagonal club in $\vec{\mu}$, 
  and suppose for sake of contradiction that $B := \{\xi < \theta \mid g(\xi) \not\in C_\xi\}$ is unbounded in $\theta$. 
  Define a function $h \in \prod_{\xi < \theta} \mu_\xi$ by:
  \[
    h(\xi) = \begin{cases} \max(C_\xi \cap g(\xi)) & \mbox{ if } \xi \in B \\ 
      0 & \mbox{ if } \xi \not\in B. \end{cases}
  \]
  $h < g$, so, since $g$ is an eub, there is $\alpha < \lambda$ such that $h <^* f_\alpha$. Define a function 
  $\hat{h} \in \prod_{\xi < \theta} \mu_\xi$ by letting $\hat{h}(\xi) = \min(C_\xi \setminus f_\alpha(\xi))$ for all $\xi < \theta$. Since 
  $\vec{C}$ and $f_\alpha$ are in $V$, we also have $\hat{h} \in V$, so, as $\vec{f}$ is a scale in $V$, there is $\beta < \lambda$ such 
  that $\hat{h} <^* f_\beta$. Fix $\xi^* < \theta$ such that, for all $\xi \in \theta \setminus \xi^*$, we have  
  $h(\xi) < f_\alpha(\xi) \leq \hat{h}(\xi) < f_\beta(\xi)$. Then, for all $\xi \in B \setminus \xi^*$, 
  \[
    g(\xi) < \min(C_\xi \setminus g(\xi)) = \min(C_\xi \setminus h(\xi) + 1) \leq \min(C_\xi \setminus f_\alpha(\xi)) = \bar{h}(\xi) 
    < f_\beta(\xi),
  \] 
  contradicting the fact that $f_\beta <^* g$.
\end{proof}

Models $V$ and $W$ as in the statement of Theorem \ref{diag_thm1} can be obtained, for $\theta = \aleph_0$, using the diagonal supercompact Prikry forcing 
introduced by Gitik and Sharon in \cite{gitik_sharon} and, for uncountable $\theta$, using the diagonal supercompact Magidor forcing 
introduced by Sinapova in \cite{sinapova_2008}. 

We also obtain the following variant of Theorem \ref{diag_thm1}. We thank the referee for bringing it to our attention.

\begin{theorem} \label{diag_thm1.5}
  Suppose that:
  \begin{enumerate}
    \item $V$ is an inner model of $W$;
    \item in both $V$ and $W$, $\mu$ is a singular cardinal and $\theta = \mathrm{cf}(\mu)$;
    \item in $V$, $\vec{\mu} = \langle \mu_\xi \mid \xi < \theta \rangle$ is an increasing sequence 
      of regular cardinals, cofinal in $\mu$, such that there is a scale of length $\mu^+$ in $\prod_{\xi < \theta} \mu_\xi$;
    \item $(\mu^+)^V = (\mu^+)^W$;
    \item $(\prod_{\xi < \theta} \mu_\xi)^V$ is bounded in $((\prod_{\xi < \theta} \mu_\xi)^W, <^*)$.
  \end{enumerate}
  Then, in $W$, there is a function $g \in \prod_{\xi < \theta} \mu_\xi$ such that:
  \begin{enumerate}
    \item[(i)] for all $\vec{C} = \langle C_\xi \mid \xi < \theta \rangle \in V$ such that $\vec{C}$ is a diagonal club in $\vec{\mu}$, 
      for all sufficiently large $\xi < \theta$, $g(\xi) \in C_\xi$;
    \item[(ii)] for all $\epsilon < \mu$, for all sufficiently large $\xi < \theta$, $\epsilon < \cf(g(\xi))$.
  \end{enumerate}
\end{theorem}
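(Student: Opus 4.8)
The plan is to follow the proof of Theorem~\ref{diag_thm1} almost verbatim, with hypotheses (4) and (5) playing the roles that the assumption $(\mu^+)^V=\nu^{+k+1}$ and the small-cofinality analysis played there. Set $\lambda=(\mu^+)^V$. Working in $V$, I would fix a scale $\vec f=\langle f_\alpha\mid\alpha<\lambda\rangle$ in $\prod_{\xi<\theta}\mu_\xi$ and, exactly as in the proof of Lemma~\ref{csv_lemma}, adjust it so that for every limit ordinal $\beta<\lambda$ there is a club $c_\beta\subseteq\beta$ with $\sup\{f_\alpha\mid\alpha\in c_\beta\}<^*f_\beta$. Moving to $W$: by hypothesis (4), $\lambda$ is a regular cardinal in $W$, the sequence $\vec f$ is still $<^*$-increasing of length $\lambda$, and---since $\theta=\cf^W(\mu)$ is regular in $W$---each $c_\beta$ remains a club in $\beta$ and the relation $\sup\{f_\alpha\mid\alpha\in c_\beta\}<^*f_\beta$ is preserved, all the relevant suprema and ``bounded in $\theta$'' facts being absolute.

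Next I would extract the exact upper bound. Since $\mu$ is a limit cardinal, for every regular $\rho$ with $\theta<\rho<\mu$ we have $\theta<\rho<\rho^{++}<\mu<\lambda$, so Lemma~\ref{club_guessing_lemma} (with $X=\theta$) produces an eub for $\vec f$ all of whose values have cofinality $>\rho$; as any two eubs for $\vec f$ agree modulo a bounded subset of $\theta$, there is a single eub $g$ for $\vec f$ with the property that, for every $\epsilon<\mu$, $\cf(g(\xi))>\epsilon$ holds for all sufficiently large $\xi<\theta$ (in particular $g(\xi)>0$ eventually, and I may assume this holds everywhere). Now I invoke hypothesis (5): it provides $F\in(\prod_{\xi<\theta}\mu_\xi)^W$ dominating every $f_\alpha$, hence an upper bound for $\vec f$; since an exact upper bound is $\leq^*$-below every upper bound---a fact immediate from the definition of eub---we get $g\leq^*F$, and since $F(\xi)<\mu_\xi$ for all $\xi$, $g(\xi)<\mu_\xi$ for all sufficiently large $\xi$. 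Correcting $g$ on a bounded subset of $\theta$ (which affects neither its being an eub nor the cofinality property) I may take $g\in\prod_{\xi<\theta}\mu_\xi$; then $\cf(g(\xi))\leq g(\xi)<\mu_\xi$ for large $\xi$, so $g$ already witnesses requirement (ii).

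Finally, requirement (i) is verified exactly as in Theorem~\ref{diag_thm1}. Given $\vec C=\langle C_\xi\mid\xi<\theta\rangle\in V$ a diagonal club in $\vec\mu$, I would assume toward a contradiction that $B=\{\xi<\theta\mid g(\xi)\notin C_\xi\}$ is unbounded, define $h\in\prod_{\xi<\theta}\mu_\xi$ by $h(\xi)=\sup(C_\xi\cap g(\xi))$ for $\xi\in B$ (which, since $C_\xi$ is closed and $g(\xi)\notin C_\xi$, is $<g(\xi)$) and $h(\xi)=0$ otherwise, use that $g$ is an eub to find $\alpha<\lambda$ with $h<^*f_\alpha$, set $\hat h(\xi)=\min(C_\xi\setminus f_\alpha(\xi))$ and note $\hat h\in(\prod_{\xi<\theta}\mu_\xi)^V$, use that $\vec f$ is a scale \emph{in $V$} to find $\beta<\lambda$ with $\hat h<^*f_\beta$, and conclude that $g(\xi)<f_\beta(\xi)$ for unboundedly many $\xi\in B$, contradicting $f_\beta<^*g$.

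The argument is essentially routine given the proof of Theorem~\ref{diag_thm1}; the one genuinely new point is the use of hypothesis (5) to place the eub strictly inside $\prod_{\xi<\theta}\mu_\xi$ (in Theorem~\ref{diag_thm1} this was the delicate part and required the small-cofinality lemma), and the only mild subtlety is checking that the club-guessing data and the relation $<^*$ on ${}^\theta\mathrm{On}$ transfer from $V$ to $W$, which rests on $\theta$ remaining regular and $(\mu^+)^V$ remaining a regular cardinal in $W$.
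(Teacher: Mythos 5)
Your proposal is correct and follows exactly the route the paper intends when it says the proof of Theorem \ref{diag_thm1.5} is essentially that of Theorem \ref{diag_thm1}: transfer the $V$-scale with its club-coherence to $W$, extract an eub via Lemma \ref{club_guessing_lemma} with large cofinalities at each coordinate, and verify clause (i) by the same $h$, $\hat{h}$ argument. Your one genuinely new step---using hypothesis (5) together with the fact that an exact upper bound is $\leq^*$-below any upper bound to force $g \in \prod_{\xi<\theta}\mu_\xi$, in place of the cofinality analysis via Lemma \ref{small_cf_lemma} used in Theorem \ref{diag_thm1}---is precisely the intended adaptation and is sound (including your remark that $g(\xi)>0$ eventually, which that fact needs).
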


The proof of Theorem \ref{diag_thm1.5} is essentially the same as that of Theorem \ref{diag_thm1}, so we omit it. We note that, 
for $\theta = \aleph_0$, models $V$ and $W$ as in its statement can be obtained using diagonal Prikry forcing 
or extender-based forcings such as those in \cite[Sections 1.3 and 2]{gitikhandbook}.

We can now extend Corollary \ref{magidor_sinapova_cor} to the case in which $\kappa^{+\omega+1}$ becomes the 
successor of $|\kappa|$. We first note that, by the following result of Gitik, a straightforward generalization to 
clubs in $\mathcal{P}_\kappa(\kappa^{+\omega})$ is impossible. We therefore seek a diagonal sequence 
meeting diagonal clubs in $\langle \mathcal{P}_\kappa(\kappa^{+i}) \mid i < \omega \rangle$.

\begin{proposition}[Gitik, \cite{gitik_witnessing_seq}, Proposition 0.4]
  Suppose that:
  \begin{enumerate}
    \item $V$ is an inner model of $W$;
    \item $\kappa < \mu$ are cardinals in $V$, with $\kappa$ regular and $\cf(\mu) < \kappa$;
    \item in $V$, for all $\tau < \kappa$, $\tau^{\cf(\mu)} \leq \mu$;
    \item $(\mu^+)^V$ remains a cardinal in $W$.
  \end{enumerate}
  Then there is a sequence $\langle \mathcal{C}_\alpha \mid \alpha < (\mu^+)^V \rangle \in V$ of clubs in 
  $(\mathcal{P}_\kappa(\mu))^V$ such that, in $W$, for any $\theta < (\mu^+)^V$ and any sequence 
  $\langle x_i \mid i < \theta \rangle$ of elements of $(\mathcal{P}_\kappa(\mu))^V$, there is 
  $\alpha < (\mu^+)^V$ such that, for all $i < \theta$, $x_i \not\in \mathcal{C}_\alpha$.
\end{proposition}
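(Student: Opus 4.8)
The plan is to manufacture the clubs $\mathcal{C}_\alpha$ out of a PCF scale that lives in $V$, and then to exploit the fact that a scale has no $<^*$-upper bound lying inside its own product, together with the observation that $(\mu^+)^V$ is necessarily a \emph{regular} cardinal in $W$.

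First I would work in $V$. Set $\delta = \cf(\mu)$, so $\delta < \kappa$, and $\lambda = \mu^+$. By Theorem~\ref{scale_thm}, fix an increasing sequence $\langle \mu_j \mid j < \delta \rangle$ of regular cardinals, cofinal in $\mu$, carrying a scale $\vec{f} = \langle f_\alpha \mid \alpha < \lambda \rangle$ in $\prod_{j < \delta} \mu_j$; discarding an initial segment of $\langle \mu_j \mid j < \delta \rangle$ and restricting the $f_\alpha$ accordingly (which leaves a scale, over an index set order-isomorphic to $\delta$), we may assume $\mu_j \geq \kappa$ for all $j$. Then for every $x \in (\mathcal{P}_\kappa(\mu))^V$ and $j < \delta$ we have $|x \cap \mu_j| < \kappa \leq \cf(\mu_j)$, so $x \cap \mu_j$ is bounded below $\mu_j$ and $g_x := \langle \sup(x \cap \mu_j) \mid j < \delta \rangle \in \prod_{j < \delta} \mu_j$. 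For $\alpha < \lambda$ define
\[
  \mathcal{C}_\alpha = \{ x \in (\mathcal{P}_\kappa(\mu))^V \mid \forall j < \delta, ~ g_x(j) \geq f_\alpha(j) \}.
\]
I would check in $V$ that each $\mathcal{C}_\alpha$ is a club in $(\mathcal{P}_\kappa(\mu))^V$: it is $\subseteq$-cofinal because for any $y$ the set $y \cup \{ f_\alpha(j) \mid j < \delta \}$ extends $y$, has size $< \kappa$ (using $\delta < \kappa$ and the regularity of $\kappa$), and lies in $\mathcal{C}_\alpha$; and it is closed under unions of $\subseteq$-increasing chains of length $< \kappa$, since such a union commutes with the coordinatewise suprema defining $g_x$. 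As it is defined from $\langle \mu_j \mid j < \delta \rangle$ and $\vec{f}$, the sequence $\langle \mathcal{C}_\alpha \mid \alpha < \lambda \rangle$ lies in $V$.

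Then I would pass to $W$ and argue by contradiction. First, $\lambda = (\mu^+)^V$ is regular in $W$: since $\lambda$ is a cardinal of $W$ and $\mu < \lambda$ we have $|\mu|^W < \lambda$, and there can be no $W$-cardinal $\nu$ with $|\mu|^W < \nu < \lambda$, because such a $\nu$ satisfies $|\nu|^V \leq \mu$, so composing a surjection from $\mu$ onto $\nu$ given by $V$ with a surjection from $|\mu|^W$ onto $\mu$ given by $W$ would collapse $\nu$ in $W$; hence $\lambda$ is the successor of $|\mu|^W$ computed in $W$, in particular regular in $W$. Now suppose the conclusion fails, so there are $\theta < \lambda$ and a sequence $\langle x_i \mid i < \theta \rangle$ of elements of $(\mathcal{P}_\kappa(\mu))^V$ such that every $\mathcal{C}_\alpha$ contains some $x_i$; for each $\alpha < \lambda$ fix $i_\alpha < \theta$ with $x_{i_\alpha} \in \mathcal{C}_\alpha$. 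Since $|\theta|^W < \lambda = \cf^W(\lambda)$, the fibers of $\alpha \mapsto i_\alpha$ cannot all be bounded in $\lambda$, so there is $i^* < \theta$ with $S := \{ \alpha < \lambda \mid i_\alpha = i^* \}$ unbounded in $\lambda$. Put $g = g_{x_{i^*}}$; then $g \in (\prod_{j < \delta} \mu_j)^V$ and $f_\alpha \leq g$ pointwise for every $\alpha \in S$. Since $\vec{f}$ is $<^*$-increasing, for any $\beta < \lambda$ we may pick $\alpha \in S$ with $\alpha > \beta$ and get $f_\beta <^* f_\alpha \leq g$, so $f_\beta <^* g$; thus $g$ is a $<^*$-upper bound for $\vec{f}$. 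But $\vec{f}$ is $<^*$-cofinal in $\prod_{j < \delta} \mu_j$ — a fact witnessed in $V$, hence applicable to $g$ since $g$ lies in the $V$-product and $<^*$ is absolute — so there is $\gamma < \lambda$ with $g <^* f_\gamma$; combined with $f_\gamma <^* g$ this yields $f_\gamma <^* f_\gamma$, which is absurd.

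The steps I expect to require the most care are the verification that $\lambda$ remains regular in $W$, which is what powers the pigeonhole, and the bookkeeping of which objects belong to $V$: the enumeration $\langle x_i \mid i < \theta \rangle$ and the choice function $\alpha \mapsto i_\alpha$ are only assumed to exist in $W$, but each $x_i$ individually lies in $V$, hence so does $g_{x_{i^*}}$, and it is precisely this that lets us apply the $V$-cofinality of $\vec{f}$ to $g_{x_{i^*}}$. (Hypothesis~(3) of the statement does not appear to be needed for this argument.)
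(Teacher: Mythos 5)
The paper does not actually prove this proposition: it is quoted from Gitik \cite{gitik_witnessing_seq} as background motivation for Theorem \ref{diagonal_guessing_theorem}, so there is no internal proof to compare yours against. Judged on its own terms, your argument is correct. The clubs $\mathcal{C}_\alpha$ are indeed clubs in $(\mathcal{P}_\kappa(\mu))^V$ (unboundedness needs only $\delta<\kappa$ and the regularity of $\kappa$; closure under short $\subseteq$-increasing unions commutes with the coordinatewise suprema), the sequence is definable in $V$ from the scale, and the bookkeeping of where objects live is handled correctly: each $x_i$, hence the characteristic function $g_{x_{i^*}}$, lies in $V$ and in $(\prod_{j<\delta}\mu_j)^V$, so the $V$-cofinality of the scale applies to it, while $<^*$ and pointwise domination between two $V$-functions are absolute. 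Your preliminary observation that $(\mu^+)^V$, once preserved, is the $W$-successor of $|\mu|^W$ and hence regular in $W$ is also right, and it is exactly what powers the pigeonhole (equivalently: each set $\{\alpha : x_i\in\mathcal{C}_\alpha\}$ is bounded in $(\mu^+)^V$, and fewer than $(\mu^+)^V$ bounded sets cannot cover it). Your approach is very much in the spirit of this paper's own PCF methodology; it is essentially the same ``characteristic function versus scale'' contradiction that appears in the proof of Theorem \ref{diag_thm1}, with Theorem \ref{scale_thm} supplying the scale outright in ZFC. Your closing remark is also fair: hypothesis (3) plays no role in your proof of the statement as quoted here, and its presence in Gitik's formulation presumably reflects a different construction of the witnessing clubs rather than a necessity for this particular conclusion.
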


\begin{theorem} \label{diagonal_guessing_theorem}
  Suppose that:
  \begin{enumerate}
    \item $V$ is an inner model of $W$;
    \item $\kappa$ is a regular cardinal in $V$;
    \item for all $i < \omega$, $\cf^W((\kappa^{+i})^V) = \omega$;
    \item in $W$, $(\kappa^{+\omega+1})^V = \nu^{+k+1}$ for some singular cardinal $\nu$ and $k \leq 2$;
    \item $(2^{\aleph_0})^W < (\kappa^{+\omega+1})^V$;
    \item $\langle \mathcal{C}_{\alpha, i} \mid \alpha < (\kappa^{+\omega+1})^V, ~ i < \omega \rangle \in W$ 
      is such that:
      \begin{enumerate}
        \item for all $i < \omega$, $\langle \mathcal{C}_{\alpha, i} \mid 
          \alpha < (\kappa^{+\omega+1})^V \rangle \in V$;
        \item for all $\alpha < (\kappa^{+\omega+1})^V$ and $i < \omega, ~ \mathcal{C}_{\alpha, i}$ is club in 
          $(\mathcal{P}_\kappa(\kappa^{+i}))^V$.
      \end{enumerate}
  \end{enumerate}
  Then, in $W$, there is a sequence $\langle x_i \mid i < \omega \rangle$ such that:
  \begin{enumerate}
    \item[(i)] for all $\alpha < (\kappa^{+\omega+1})^V$ and all sufficiently large $i < \omega, ~ x_i \in \mathcal{C}_{\alpha, i}$.
    \item[(ii)] for all $\gamma < \nu$, all sufficiently large $i < \omega$, and all $j \leq i$, $\cf(\sup(x_i \cap (\kappa^{+j})^V)) > \gamma$.
  \end{enumerate}
\end{theorem}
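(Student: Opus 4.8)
The plan is to combine the fat-tree machinery of Theorem~\ref{fat_tree_guessing_theorem} (in the form used to prove Corollary~\ref{magidor_sinapova_cor}) with the diagonal-scale analysis of Theorem~\ref{diag_thm1} (or rather Lemma~\ref{csv_lemma}), applied level-by-level along the $\kappa^{+i}$. First I would work in $V$: let $\Upsilon$ be large and let $\mathcal{X} = \{M \cap \kappa^{+\omega} \mid M \prec H(\Upsilon), ~ |M| < \kappa, ~ M \cap \kappa \in \kappa\}$, so that $\mathcal{X} \restriction \kappa^{+i}$ (the traces on $\kappa^{+i}$) is club in $(\mathcal{P}_\kappa(\kappa^{+i}))^V$ for each $i < \omega$. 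By shrinking the $\mathcal{C}_{\alpha,i}$ I may assume each is the $i$-th trace of such an $\mathcal{X}$-club, and for each $\alpha, i$ I would use Lemma~\ref{fat_tree_model_lemma} to fix a fat tree $T(\alpha,i)$ of type $(\kappa, \langle \kappa^{+i}, \kappa^{+i-1}, \ldots, \kappa\rangle)$ with the property that every branch of length $i+1$ is realized by some $z \in \mathcal{C}_{\alpha,i}$ via $\sup(z \cap \kappa^{+j})$.

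**Building the diagonal object.**
Next I would move to $W$ and use hypothesis (4) together with Lemma~\ref{csv_lemma} / the scale analysis: since $(\kappa^{+\omega+1})^V = \nu^{+k+1}$ with $\cf^W(\nu) = \omega$, apply Theorem~\ref{scale_thm} in $V$ to get a scale of length $\kappa^{+\omega+1}$ in $\prod_{i<\omega} \kappa^{+n_i}$ for some cofinal $\langle n_i \mid i < \omega\rangle \subseteq \omega$; after thinning, each $\kappa^{+n_i}$ has $W$-cofinality $\omega$. The idea is that the top-level behavior of the desired sequence is governed by an eub $g \in \prod_i \kappa^{+n_i}$ of this scale, exactly as in Theorem~\ref{diag_thm1}: one shows $\cf^W(g(i)) < \nu$ for all large $i$ (else Lemma~\ref{small_cf_lemma} forces $\cf(g(i)) = \nu^{+k+1}$, contradicting $g(i) < \kappa^{+n_i} \le \nu^{+k}$), so $\cf^W(g(i)) < \kappa$, giving (ii) at the top level. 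Then, for each fixed $i$, I would feed the sets $y_\eta$ from Lemma~\ref{second_covering_lemma} (the $\subseteq$-increasing cofinal sequences in $(\mathcal{P}_\kappa(\kappa^{+i}))^V$) into Theorem~\ref{fat_tree_guessing_theorem} applied to the tree sequence $\langle T(\alpha,i) \mid \alpha < \kappa^{+\omega+1}\rangle$ to obtain $\langle \delta_{j, \eta}^{(i)} \mid j \le i, ~ \eta < \theta_i\rangle$ guessing the $T(\alpha,i)$; hypothesis (5), $(2^{\aleph_0})^W < (\kappa^{+\omega+1})^V$, is what lets the $|Y|^{+3} < (\lambda^+)^V$ clause of Theorem~\ref{fat_tree_guessing_theorem} go through with $|Y| = \aleph_0$. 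Diagonalizing over $i$ and using Lemma~\ref{uniqueness_lemma} (applicable after arranging all the relevant cofinalities to be $> \omega$) I recover a genuine element $x_i \in \mathcal{P}_\kappa(\kappa^{+i})$ with $\sup(x_i \cap \kappa^{+j}) = \delta^{(i)}_{j}$ for $j \le i$, which will be the desired sequence.

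**Verifying the guessing and the main obstacle.**
To check (i), fix $\alpha < (\kappa^{+\omega+1})^V$: for all large $i$, $\langle \delta^{(i)}_j \mid j \le i\rangle \in T(\alpha, i)$ by the guessing clause of Theorem~\ref{fat_tree_guessing_theorem}, hence by Lemma~\ref{fat_tree_model_lemma} there is $z \in \mathcal{C}_{\alpha,i}$ with $\sup(z \cap \kappa^{+j}) = \delta^{(i)}_j$ for all $j \le i$; by uniqueness this $z$ is $x_i$, so $x_i \in \mathcal{C}_{\alpha,i}$. Clause (ii) is immediate from the cofinality requirements on the $\delta$'s built into Theorem~\ref{fat_tree_guessing_theorem}(ii), once we know each relevant ordinal lies below $\nu$ and has $W$-cofinality exceeding any prescribed $\gamma < \nu$. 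The main obstacle I anticipate is the bookkeeping needed to run the $\omega$-many applications of Theorem~\ref{fat_tree_guessing_theorem} coherently — in particular ensuring that the various directed sets $Y$ (one per level $i$) can be chosen compatibly so that a single sequence $\langle x_i \mid i < \omega\rangle$ simultaneously guesses all the $\mathcal{C}_{\alpha,i}$, and that the cofinality constraints from the scale analysis (forcing $\cf(g(i)) < \nu$) mesh with the "$\cf(\delta_{j,z}) > \nu'$" constraints from clause (ii) of Theorem~\ref{fat_tree_guessing_theorem}; reconciling "bounded below $\nu$" with "cofinality as large as we like below $\nu$" requires that $\nu$ be a limit cardinal, which is exactly hypothesis (4), so the argument should close.
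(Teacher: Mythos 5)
Your first half is essentially the paper's: for each fixed $i$, shrink the clubs, build fat trees via Lemma \ref{fat_tree_model_lemma}, and run Theorem \ref{fat_tree_guessing_theorem} (i.e.\ the argument of Corollary \ref{magidor_sinapova_cor}) level by level to get, for each $i$, a sequence $\langle z_{i,n}\mid n<\omega\rangle$ that eventually lands in each $\mathcal{C}_{\alpha,i}$, with the cofinality clause supplied by hypothesis (4) (every $\gamma<\nu$ satisfies $\gamma^{+2}<\nu^{+k+1}$; no scale/eub analysis \`a la Lemma \ref{csv_lemma} is needed here, since (4) is an assumption, not something to be derived). But the step you label ``diagonalizing over $i$'' is precisely where the proof does not close, and your proposal contains no mechanism for it. For each level $i$ and each $\alpha<(\kappa^{+\omega+1})^V$, the guessing theorem only gives an $\alpha$-dependent threshold $n$ beyond which $z_{i,n}\in\mathcal{C}_{\alpha,i}$; to produce a single $x_i$ per level you must choose one index per level, and there are $(\kappa^{+\omega+1})^V$-many $\alpha$'s competing against only countably many indices at each of countably many levels. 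Simply asserting, as you do in the verification of (i), that ``for all large $i$, $\langle\delta^{(i)}_j\mid j\le i\rangle\in T(\alpha,i)$'' is unjustified once the indices have been fixed. The obstacle is not the bookkeeping of choosing the directed sets $Y$ compatibly.

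The paper closes this gap with two ideas you are missing. First, a coherence step carried out in $V$: fixing injections $e_\beta:\beta\rightarrow\kappa^{+\omega}$ and demanding $y_i\cap\kappa^{+i}\subseteq x$ for members of the clubs, one recursively replaces $\mathcal{C}_{\alpha,i}$ by $\mathcal{C}'_{\alpha,i}$ so that $\mathcal{C}'_{\beta,i}\subseteq\mathcal{C}'_{\alpha,i}$ whenever $e_\beta(\alpha)\in y_i\cap\kappa^{+i}$; this reduces the task to guessing the clubs of an \emph{unbounded} set of indices $\alpha$. Second, a counting argument which is the true role of hypothesis (5): for each $\alpha$ record the threshold function $\sigma_\alpha\in{}^{\omega}\omega$, and use $(2^{\aleph_0})^W<(\kappa^{+\omega+1})^V$ to find an unbounded $A$ on which $\sigma_\alpha$ is constant, then set $x_i=z_{i,\sigma(i)}$; coherence then handles every $\alpha\notin A$ via $\beta=\min(A\setminus\alpha)$. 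Your proposal instead attributes hypothesis (5) to the requirement $|Y|^{+3}<(\lambda^+)^V$ of Theorem \ref{fat_tree_guessing_theorem}, which is automatic here ($\aleph_3<\nu^{+k+1}$ since $\nu$ is a singular cardinal) --- a sign that the actual pigeonhole/coherence mechanism is absent from your argument. Without those two ingredients the diagonal sequence cannot be shown to satisfy clause (i) for all $\alpha$.
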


\begin{proof}
  In $V$, let $\Upsilon$ be a sufficiently large, regular cardinal, let $\lambda = \kappa^{+\omega+1}$, 
  let $\mu = \kappa^{+\omega}$, and, for $i < \omega$, let $\mu_i = \kappa^{+i}$. 
  By Lemma \ref{first_covering_lemma}, there is, in $W$, a $\subseteq$-increasing sequence 
  $\langle y_i \mid i < \omega \rangle$ from $(\mathcal{P}_\kappa(\mu))^V$ such 
  that $\bigcup_{i < \omega} y_i = \mu$. In $V$, for $i < \omega$, let $\mathcal{D}_i$ be the 
  set of $x \in \mathcal{P}_\kappa(\mu_i)^V$ such that:
  \begin{itemize}
    \item there is $M \prec H(\Upsilon)$ such that $x = M \cap \mu_i$;
    \item $x \cap \kappa \in \kappa$;
    \item $(y_i \cap \mu_i) \subseteq x$.
  \end{itemize}
  $\mathcal{D}_i$ is club in $\mathcal{P}_\kappa(\mu_i)$ and $\mathcal{D}_i \in V$ (though $\langle \mathcal{D}_i \mid i < \omega \rangle \not\in V$). 
  By intersecting each $\mathcal{C}_{\alpha, i}$ with $\mathcal{D}_i$, we may assume that, for all $\alpha < \lambda$ 
  and all $i < \omega$, $\mathcal{C}_{\alpha, i} \subseteq \mathcal{D}_i$. Note that this does not interfere with the hypotheses 
  of the theorem, as it is still the case that, for all $i < \omega$, $\langle \mathcal{C}_{\alpha, i} \mid 
  \alpha < \lambda \rangle \in V$.

  Still working in $V$, fix a sequence $\langle e_\beta \mid \beta < \lambda \rangle$ such that, for all 
  $\beta < \lambda, ~ e_\beta:\beta \rightarrow \mu$ is an injection. For each $i < \omega$, by recursion on $\alpha < \lambda$, 
  define $\langle \mathcal{C}'_{\alpha, i} \mid \alpha < \lambda \rangle$ as follows. If $\beta < \lambda$ and 
  $\langle \mathcal{C}'_{\alpha, i} \mid \alpha < \beta \rangle$ has been defined, let 
  \[
    \mathcal{C}'_{\beta, i} = \mathcal{C}_{\beta, i} \cap \{x \in \mathcal{P}_\kappa(\mu_i) \mid \mbox{for 
    all } \alpha < \beta, \mbox{ if } e_\beta(\alpha) \in x, \mbox{ then } x \in \mathcal{C}'_{\alpha, i}\}.
  \]
  The following facts are immediate:
  \begin{itemize}
    \item for all $i < \omega$, $\langle \mathcal{C}'_{\alpha, i} \mid \alpha < \lambda \rangle \in V$;
    \item for all $\alpha < \lambda$ and $i < \omega$, $\mathcal{C}'_{\alpha, i} \subseteq \mathcal{C}_{\alpha, i}$ and 
      $\mathcal{C}'_{\alpha, i}$ is club in $\mathcal{P}_\kappa(\mu_i)$;
    \item for all $\alpha < \beta < \lambda$ and all $i < \omega$ such that $e_\beta(\alpha) \in (y_i \cap \mu_i)$, we have  
      $\mathcal{C}'_{\beta, i} \subseteq \mathcal{C}'_{\alpha, i}$.
  \end{itemize}
  
  Move now to $W$. By Theorem \ref{fat_tree_guessing_theorem} and the arguments from the proof of Corollary \ref{magidor_sinapova_cor}, 
  there is, for each $i < \omega$, a $\subseteq$-increasing sequence $\langle z_{i,n} \mid n < \omega \rangle$ such that:
  \begin{itemize}
    \item for all $\alpha < \lambda$ and all sufficiently large $n < \omega$, $z_{i,n} \in \mathcal{C}'_{\alpha, i}$;
    \item for all $\gamma < \nu$, all sufficiently large $n < \omega$, and all $j \leq i$, $\cf(\sup(z_{i,n} \cap \mu_j)) > \gamma$.
  \end{itemize}
  Let $\langle \gamma_i \mid i < \omega \rangle$ be increasing and cofinal in $\nu$. For all $\alpha < \lambda$, 
  find a function $\sigma_\alpha \in {^\omega}\omega$ such that, for all $i < \omega$, all $n \geq \sigma_\alpha(i)$, and all $j \leq i$, we have 
  $z_{i, n} \in \mathcal{C}'_{\alpha, i}$ and $\cf(\sup(z_{i,
  n} \cap \mu_j)) > \gamma_i$. 
  Since $2^{\aleph_0} < \lambda$, we can find an unbounded $A \subseteq \lambda$ and a fixed $\sigma \in {^\omega}\omega$ such 
  that, for all $\alpha \in A$, $\sigma_\alpha = \sigma$. For $i < \omega$, let $x_i = z_{i, \sigma(i)}$. 

  We claim that $\langle x_i \mid i < \omega \rangle$ is as desired. Requirement (ii) is immediate, since we have arranged that, 
  for all $i < \omega$ and all $j \leq i$, $\cf(\sup(x_i \cap \mu_j)) > \gamma_i$. To verify requirement (i), fix an 
  $\alpha < \lambda$. If $\alpha \in A$, then we have arranged that, for all $i < \omega$, $x_i \in \mathcal{C}'_{\alpha, i} 
  \subseteq \mathcal{C}_{\alpha, i}$. If $\alpha \not\in A$, then let $\beta = \min(A \setminus \alpha)$, and 
  let $i^* < \omega$ be least such that $e_\beta(\alpha) \in y_{i^*} \cap \mu_{i^*}$. Then, for all $i^* \leq i < \omega$, 
  we have $x_i \in \mathcal{C}'_{\beta, i} \subseteq \mathcal{C}'_{\alpha, i} \subseteq \mathcal{C}_{\alpha, i}$.
\end{proof}

In \cite{gitik_witnessing_seq}, Gitik extends Corollary \ref{magidor_sinapova_cor} under some additional 
cardinal arithmetic assumptions as follows.  The main difference here is that
$\kappa^{+m+1}$ has been replaced with a cardinal $\mu$ such that $\mu^{<\mu}=\mu$.

\begin{theorem}[Gitik, \cite{gitik_witnessing_seq}, Theorem 0.2]
  Suppose that:
  \begin{enumerate}
    \item $V$ is an inner model of $W$;
    \item in $V$, $\kappa < \mu$ are regular uncountable cardinals and $\mu^{<\mu} = \mu$;
    \item in $W$, there is a sequence $\langle y_i \mid i < \omega \rangle$ from $(\mathcal{P}_\kappa(\mu))^V$ 
      such that $\bigcup_{i < \omega} y_i = \mu$;
    \item $(\mu^+)^V$ remains a cardinal in $W$;
    \item $\mu \geq (2^{\aleph_0})^W$;
    \item in $V$, $\langle \mathcal{C}_\alpha \mid \alpha < \mu^+ \rangle$ is a sequence of clubs in $\mathcal{P}_\kappa(\mu)$.
  \end{enumerate}
  Then, in $W$, there is a sequence $\langle x_i \mid i < \omega \rangle$ such that, for all $\alpha < (\mu^+)^V$ and all 
  sufficiently large $i < \omega$, $x_i \in \mathcal{C}_\alpha$.
\end{theorem}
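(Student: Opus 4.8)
The plan is to adapt the coherence-plus-pigeonhole strategy used in the proof of Theorem~\ref{diagonal_guessing_theorem}. Write $\lambda = (\mu^+)^V$. Working first in $V$, I fix a sequence $\langle e_\beta \mid \beta < \lambda \rangle$ with each $e_\beta \colon \beta \to \mu$ injective and define, by recursion on $\beta < \lambda$, a sequence $\langle \mathcal{C}'_\alpha \mid \alpha < \lambda \rangle \in V$ by
\[
  \mathcal{C}'_\beta = \mathcal{C}_\beta \cap \{ x \in \mathcal{P}_\kappa(\mu) \mid \forall \alpha < \beta,\ e_\beta(\alpha) \in x \Rightarrow x \in \mathcal{C}'_\alpha \}.
\]
Since a given $x \in \mathcal{P}_\kappa(\mu)$ satisfies $e_\beta(\alpha) \in x$ for fewer than $\kappa$ many $\alpha < \beta$ and $e_\beta$ is injective, the second set above is a diagonal intersection of clubs, hence club by normality of the club filter on $\mathcal{P}_\kappa(\mu)$; thus each $\mathcal{C}'_\beta$ is a club with $\mathcal{C}'_\beta \subseteq \mathcal{C}_\beta$, and for $\alpha < \beta$ we have the coherence property that $e_\beta(\alpha) \in x$ and $x \in \mathcal{C}'_\beta$ together imply $x \in \mathcal{C}'_\alpha$. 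I also fix, in $V$, functions $F_\alpha \colon [\mu]^{<\omega} \to \mathcal{P}_\kappa(\mu)$ so that any $x$ with $x \cap \kappa \in \kappa$ that is closed under $F_\alpha$ lies in $\mathcal{C}'_\alpha$, together with a well-ordering of $(\mathcal{P}_\kappa(\mu))^V$.

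Next, moving to $W$, for each $\alpha < \lambda$ I build a sequence $\vec{x}^\alpha = \langle x^\alpha_i \mid i < \omega \rangle$ by letting $x^\alpha_i$ be the least (in the fixed well-ordering) member of $(\mathcal{P}_\kappa(\mu))^V$ that contains $y_i$, satisfies $x^\alpha_i \cap \kappa \in \kappa$, and is closed under $F_\alpha$; such a set exists, has $V$-cardinality $< \kappa$ since $\kappa$ is regular, and lies in $\mathcal{C}'_\alpha$. Then $\vec{x}^\alpha$ is $\subseteq$-increasing and, since $\bigcup_i y_i = \mu$, we have $\bigcup_i x^\alpha_i = \mu$. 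I claim $\vec{x}^\alpha$ is eventually inside $\mathcal{C}_\gamma$ for every $\gamma \leq \alpha$: for $\gamma = \alpha$ this is immediate, and for $\gamma < \alpha$ pick $i_0 < \omega$ with $e_\alpha(\gamma) \in x^\alpha_{i_0}$; then for all $i \geq i_0$ we have $e_\alpha(\gamma) \in x^\alpha_i \in \mathcal{C}'_\alpha$, so $x^\alpha_i \in \mathcal{C}'_\gamma \subseteq \mathcal{C}_\gamma$ by coherence.

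Finally, $\langle \vec{x}^\alpha \mid \alpha < \lambda \rangle$ lies in $W$, and each $\vec{x}^\alpha$ is a $\subseteq$-increasing, $\subseteq$-cofinal $\omega$-sequence from $(\mathcal{P}_\kappa(\mu))^V$. The crux of the argument is a counting step: one must show that, in $W$, there are fewer than $\lambda$ such sequences. This is exactly the point at which the hypotheses $\mu^{<\mu} = \mu$ (which gives $|(\mathcal{P}_\kappa(\mu))^V|^V = \mu$ and $\mu^{\aleph_0} = \mu$ in $V$) and $\mu \geq (2^{\aleph_0})^W$ (which controls how much the relevant countable powers can grow on passage to $W$, via a decomposition of each $\omega$-sequence of ordinals below $\mu$ along $\langle y_i \mid i < \omega \rangle$ together with the $V$-enumerations of the $y_i$) must be combined; it may well be necessary to re-engineer the construction so that $\vec{x}^\alpha$ depends on $\alpha$ only through a single parameter drawn from a set of size $\leq \mu$ rather than through an $\omega$-sequence of closure operations. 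Granting this counting bound, and using that $\lambda = (\mu^+)^V$ is regular in $W$, there is a single $\vec{x}$ and a cofinal $A \subseteq \lambda$ with $\vec{x}^\alpha = \vec{x}$ for all $\alpha \in A$. Then for any $\gamma < \lambda$ we may pick $\alpha \in A$ with $\alpha > \gamma$, and $\vec{x} = \vec{x}^\alpha$ is eventually inside $\mathcal{C}_\gamma$; hence $\vec{x}$ is the desired sequence.
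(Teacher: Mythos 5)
First, note that the paper does not prove this statement at all: it is quoted as a black box from Gitik (\cite{gitik_witnessing_seq}), so there is no in-paper proof to compare against; the closest in-paper analogues are Theorems \ref{diagonal_guessing_theorem} and \ref{theorem_58}, whose coherence-plus-pigeonhole shape you are imitating. Your setup is fine as far as it goes: the diagonal-intersection coherence $\mathcal{C}'_\beta$ is club by normality of the club filter on $\mathcal{P}_\kappa(\mu)$, each $x^\alpha_i$ exists in $V$ and lies in $\mathcal{C}'_\alpha$, and the tail argument handling $\gamma<\alpha$ via $e_\alpha(\gamma)$ is correct (minor quibbles: the ``least in a fixed well-ordering'' choice does not obviously make $\vec{x}^\alpha$ $\subseteq$-increasing --- take the actual $F_\alpha$-closure of $y_i$ after replacing $\langle y_i\rangle$ by its finite unions, or argue via $e_\alpha(\gamma)\in y_{i_0}\subseteq x^\alpha_i$; and the concluding pigeonhole needs $A$ unbounded in $\lambda=(\mu^+)^V$, which is fine since $\lambda=(|\mu|^+)^W$ is regular in $W$).

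The genuine gap is exactly the step you flag and then assume: the counting. As constructed, $\vec{x}^\alpha$ depends on $\alpha$ through $F_\alpha$, and there is no bound below $\lambda$ on the number of resulting sequences; indeed, since $\cf^W(\mu)=\omega$ and $\lambda=(|\mu|^+)^W$, the set of $\omega$-sequences from $(\mathcal{P}_\kappa(\mu))^V$ has $W$-cardinality at least $(|\mu|^{\aleph_0})^W\geq\lambda$, so a pigeonhole over the sequences themselves cannot succeed. In the paper's arguments this is precisely the point that is engineered away: in Theorem \ref{diagonal_guessing_theorem} the matrix $\langle z_{i,n}\rangle$ is produced \emph{independently of} $\alpha$ (via Theorem \ref{fat_tree_guessing_theorem}), and in Theorem \ref{theorem_58} the points $x_{i,k}$ meet \emph{all} $V$-clubs because $\mu$ is a strong limit, so in both cases only a function $\sigma_\alpha$ (in ${}^\omega\omega$ or ${}^\theta\theta$) depends on $\alpha$ and the pigeonhole is over a set of size at most $2^{\aleph_0}$ resp.\ $2^\theta<\lambda$. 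For Gitik's theorem neither device is available: $\mu$ is a single regular level, so the fat-tree/characteristic-function machinery of Section \ref{guessing_section} does not apply, and $\mu^{<\mu}=\mu$ gives no smallness of the family of clubs. Producing an $\alpha$-independent family that eventually enters every $\mathcal{C}'_\alpha$, using $\mu^{<\mu}=\mu$ and $\mu\geq(2^{\aleph_0})^W$, is the actual content of Gitik's theorem (and some such use of the hypotheses is unavoidable, as Gitik's Proposition 0.4 quoted in Section \ref{diagonal_section} shows for nearby configurations). So what you have is a correct reduction of the theorem to its hard core, not a proof of it.
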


We can similarly extend Theorem \ref{diagonal_guessing_theorem} by replacing
$\kappa^{+\omega}$ with a singular, strong limit cardinal $\mu$.

\begin{theorem} \label{theorem_58}
  Suppose that:
  \begin{enumerate}
    \item $V$ is an inner model of $W$;
    \item in $V$, $\kappa$ is a regular cardinal;
    \item in $V$, $\mu > \kappa$ is a singular, strong limit cardinal with $\theta := \cf(\mu) < \kappa$;
    \item in $V$, $\langle \mu_i \mid i < \theta \rangle$ is an increasing sequence of regular cardinals, cofinal 
      in $\mu$, such that $\kappa \leq \mu_0$;
    \item in $W$, there is a $\subseteq$-increasing sequence $\langle y_i \mid i < \theta \rangle$ from $(\mathcal{P}_\kappa(\mu))^V$ 
      such that $\bigcup_{i < \theta} y_i = \mu$;
    \item in $W$, $(\mu^+)^V$ remains a cardinal and $\mu \geq 2^\theta$;
    \item $\langle \mathcal{C}_{\alpha, i} \mid \alpha < (\mu^+)^V, ~ i < \theta \rangle \in W$ is such that:
      \begin{enumerate}
        \item for all $i < \theta$, $\langle \mathcal{C}_{\alpha, i} \mid \alpha < (\mu^+)^V \rangle \in V$;
        \item for all $\alpha < (\mu^+)^V$ and all $i < \theta$, $\mathcal{C}_{\alpha, i}$ is club in 
          $(\mathcal{P}_\kappa(\mu_i))^V$.
      \end{enumerate}
  \end{enumerate}
  Then there is $\langle x_i \mid i < \theta \rangle \in W$ such that, for all $\alpha < (\mu^+)^V$ and all 
  sufficiently large $i < \theta$, we have $x_i \in \mathcal{C}_{\alpha, i}$.
\end{theorem}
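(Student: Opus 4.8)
The plan is to run the proof of Theorem~\ref{diagonal_guessing_theorem} almost verbatim, with two substitutions: the sequence $\langle y_i \mid i < \theta \rangle$ supplied by hypothesis~(5) plays the role of the sequence produced there by Lemma~\ref{first_covering_lemma}, and, since the $\mu_i$ need no longer be the cardinals $\kappa^{+i}$, the appeal to the fat-tree machinery (Lemma~\ref{fat_tree_model_lemma}, Theorem~\ref{fat_tree_guessing_theorem}, Lemma~\ref{uniqueness_lemma}) must be replaced by a guessing argument valid for $\mathcal{P}_\kappa(\mu_i)$ with $\mu_i$ an arbitrary regular cardinal in $[\kappa,\mu)$.

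Work first in $V$, and let $\lambda = (\mu^+)^V$. Replacing each $y_i$ by $y_i \cap \mu_i$, we may assume $y_i \in (\mathcal{P}_\kappa(\mu_i))^V$; the sequence $\langle y_i \rangle$ stays $\subseteq$-increasing with $\bigcup_i y_i = \mu$. Fixing a sufficiently large regular $\Upsilon$ and intersecting each $\mathcal{C}_{\alpha,i}$ with the club $\{M \cap \mu_i \mid M \prec H(\Upsilon),\ M\cap\kappa\in\kappa,\ y_i\subseteq M\}$ — which keeps each $\langle \mathcal{C}_{\alpha,i}\mid\alpha<\lambda\rangle$ in $V$, since $y_i$ and that sequence are in $V$ — we may assume that $y_i \subseteq x$ for every $x \in \mathcal{C}_{\alpha,i}$. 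Now fix $\langle e_\beta \mid \beta < \lambda \rangle$ with each $e_\beta : \beta \to \mu$ injective, and, for each $i < \theta$, define $\langle \mathcal{C}'_{\alpha,i}\mid\alpha<\lambda\rangle$ by recursion on $\alpha$ as in Theorem~\ref{diagonal_guessing_theorem}: $\mathcal{C}'_{\beta,i}$ is the set of $x \in \mathcal{C}_{\beta,i}$ such that $x \in \mathcal{C}'_{\alpha,i}$ whenever $\alpha<\beta$ and $e_\beta(\alpha)\in x$. The defining clause is a diagonal intersection over $\mu_i$, so by normality of the club filter on $\mathcal{P}_\kappa(\mu_i)$ each $\mathcal{C}'_{\alpha,i}$ is a club in $\mathcal{P}_\kappa(\mu_i)$ contained in $\mathcal{C}_{\alpha,i}$; each $\langle\mathcal{C}'_{\alpha,i}\mid\alpha<\lambda\rangle$ is in $V$; and, since $y_i\subseteq x$ for $x\in\mathcal{C}_{\beta,i}$, we get $\mathcal{C}'_{\beta,i}\subseteq\mathcal{C}'_{\alpha,i}$ whenever $\alpha<\beta$ and $e_\beta(\alpha)\in y_i$.

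The heart of the proof is to obtain, in $W$, for each $i < \theta$ a sequence $\langle z_{i,\eta}\mid\eta<\theta\rangle$ from $(\mathcal{P}_\kappa(\mu_i))^V$ such that for every $\alpha < \lambda$ and all sufficiently large $\eta$ we have $z_{i,\eta}\in\mathcal{C}'_{\alpha,i}$. In Theorem~\ref{diagonal_guessing_theorem}, with $\mu_i=\kappa^{+i}$, this is produced by coding each $\mathcal{C}'_{\alpha,i}$ as a fat tree, applying Theorem~\ref{fat_tree_guessing_theorem} (with ambient cardinal $\mu$) to these trees and the directed set $\{y_\eta\mid\eta<\theta\}$, and decoding. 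For a general regular $\mu_i$ the fat-tree coding is unavailable — finitely many levels do not reach from a limit cardinal down to $\kappa$ — so the same conclusion must be obtained otherwise, and this is where the hypothesis that $\mu$ is a strong limit enters: since $\mu_i < \mu$, we have $|(\mathcal{P}_\kappa(\mu_i))^V| = (\mu_i^{<\kappa})^V \leq (2^{\mu_i})^V < \mu$, so there are fewer than $\mu$ clubs on $(\mathcal{P}_\kappa(\mu_i))^V$ in $V$. This is the coordinatewise analogue of the hypothesis $\mu^{<\mu}=\mu$ in the result of Gitik quoted above, and one adapts the argument of that theorem (equivalently, an extension of Theorem~\ref{fat_tree_guessing_theorem} past the case of $\mathcal{P}_\kappa(\kappa^{+m})$) to produce the $z_{i,\eta}$. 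I expect this step to be the main obstacle; the remainder is bookkeeping in the style of Section~\ref{diagonal_section}.

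Granting the $z_{i,\eta}$, one concludes as in Theorem~\ref{diagonal_guessing_theorem}. For $\alpha<\lambda$ let $\sigma_\alpha:\theta\to\theta$ send $i$ to the least $\eta_0$ with $z_{i,\eta}\in\mathcal{C}'_{\alpha,i}$ for all $\eta\geq\eta_0$. Since $\langle y_i\rangle$ collapses $\mu$, and hence every $V$-cardinal in $[\kappa,\mu]$, to cardinality $<\kappa$ in $W$, the cardinal $\lambda=(\mu^+)^V$ is a successor cardinal, in particular regular, in $W$; as $|{}^{\theta}\theta|=2^{\theta}\leq\mu<\lambda$, fix an unbounded $A\subseteq\lambda$ and a single $\sigma:\theta\to\theta$ with $\sigma_\alpha=\sigma$ for all $\alpha\in A$. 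Set $x_i=z_{i,\sigma(i)}$. If $\alpha\in A$ then $x_i\in\mathcal{C}'_{\alpha,i}\subseteq\mathcal{C}_{\alpha,i}$ for all $i$. If $\alpha\notin A$, put $\beta=\min(A\setminus\alpha)$ and let $i^*$ be least with $e_\beta(\alpha)\in y_{i^*}$ (which exists since $e_\beta(\alpha)<\mu=\bigcup_i y_i$); then for all $i\geq i^*$ we have $x_i=z_{i,\sigma_\beta(i)}\in\mathcal{C}'_{\beta,i}\subseteq\mathcal{C}'_{\alpha,i}\subseteq\mathcal{C}_{\alpha,i}$. Thus $\langle x_i\mid i<\theta\rangle$ is as required.
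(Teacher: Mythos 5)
Your skeleton matches the paper's: shrink the clubs so that every $x \in \mathcal{C}_{\alpha,i}$ contains $y_i$, run the $e_\beta$-recursion so that $\mathcal{C}_{\beta,i} \subseteq \mathcal{C}_{\alpha,i}$ whenever $e_\beta(\alpha) \in y_i \cap \mu_i$, then stabilize the functions $\sigma_\alpha$ on an unbounded $A \subseteq \lambda$ using $2^\theta \leq \mu < \lambda$, and finish exactly as in Theorem \ref{diagonal_guessing_theorem}. However, you explicitly leave unproved the step you yourself call the heart of the proof: producing, for each $i < \theta$, a sequence $\langle z_{i,\eta} \mid \eta < \theta \rangle$ from $(\mathcal{P}_\kappa(\mu_i))^V$ that eventually enters every relevant $V$-club. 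Saying that ``one adapts Gitik's argument'' or ``extends Theorem \ref{fat_tree_guessing_theorem} past $\mathcal{P}_\kappa(\kappa^{+m})$'' is not a proof, and the second suggestion points in the wrong direction: no fat-tree or PCF machinery is needed (or used) for this theorem at all.

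The missing step is in fact a short diagonalization, and it is exactly where the strong limit hypothesis is spent. Since $\mu$ is strong limit in $V$ and $\mu_i < \mu$, in $V$ there are fewer than $\mu$ clubs in $\mathcal{P}_\kappa(\mu_i)$ (you only bounded $|\mathcal{P}_\kappa(\mu_i)|^V$ by $\mu$; one more application of strong limit bounds the number of its subsets). Enumerate, in $V$, all such clubs as $\langle \mathcal{D}_{i,\zeta} \mid \zeta < \delta_i \rangle$ with $\delta_i < \mu$. For each $k < \theta$, the set $y_k \cap \delta_i$ lies in $V$ and has $V$-cardinality less than $\kappa$, so $\bigcap_{\zeta \in y_k \cap \delta_i} \mathcal{D}_{i,\zeta}$ is (in $V$) a club, in particular nonempty; pick $x_{i,k}$ in it. Since $\bigcup_{k<\theta} y_k = \mu \supseteq \delta_i$, for every $V$-club $\mathcal{C}$ in $\mathcal{P}_\kappa(\mu_i)$, say $\mathcal{C} = \mathcal{D}_{i,\zeta}$, we have $\zeta \in y_k$ for all large $k$, hence $x_{i,k} \in \mathcal{C}$ for all large $k$. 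This sequence eventually meets \emph{every} $V$-club of $\mathcal{P}_\kappa(\mu_i)$, in particular each $\mathcal{C}'_{\alpha,i}$, so your definition of $\sigma_\alpha$ and the remainder of your argument go through verbatim. With this paragraph inserted, your proof coincides with the paper's.
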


\begin{proof}
  Let $\lambda = (\mu^+)^V$. In $V$, fix a sequence $\langle e_\beta \mid \beta
  < \lambda \rangle$ such that, for all 
  $\beta < \lambda$, $e_\beta: \beta \rightarrow \mu$ is an injection. As in the proof of Theorem \ref{diagonal_guessing_theorem},
  by shrinking the clubs $\mathcal{C}_{\alpha, i}$ if necessary, we may assume that:
  \begin{itemize}
    \item for all $i < \theta$, all $\alpha < \lambda$, and all $x \in \mathcal{C}_{\alpha, i}$, $y_i \subseteq x$;
    \item for all $\alpha < \beta < \lambda$ and all $i < \theta$ such that $e_\beta(\alpha) \in y_i \cap \mu_i$, we have 
      $\mathcal{C}_{\beta, i} \subseteq \mathcal{C}_{\alpha, i}$.
  \end{itemize}

  Since $\mu$ is strong limit in $V$, we have that, for all $i < \theta$, there are fewer than 
  $\mu$ clubs in $\mathcal{P}_\kappa(\mu_i)$ in $V$. Enumerate all such clubs as 
  $\langle \mathcal{D}_{i, \zeta} \mid \zeta < \delta_i \rangle$ for some $\delta_i < \mu$ (with the enumeration 
  being done in $V$). By assumption, 
  in $W$, $\delta_i = \bigcup_{k < \theta} (y_k \cap \delta_i)$. For each $k < \theta$, 
  fix $x_{i, k} \in \bigcap_{\zeta \in (y_k \cap \delta_i)} \mathcal{D}_{i, \zeta}$. 
  Then $\langle x_{i, k} \mid k < \theta \rangle$ has the property that, for every $\mathcal{C} \in V$ that is 
  club in $\mathcal{P}_\kappa(\mu_i)$, for all large enough $k < \theta$, $x_{i, k} \in \mathcal{C}$. 

  In $W$, for all $\alpha < \lambda$, fix a function $\sigma_\alpha:\theta \rightarrow \theta$ such that, 
  for all $i < \theta$, $x_{i, \sigma_\alpha(i)} \in \mathcal{C}_{\alpha, i}$. Since $\lambda > 2^\theta$, 
  there is an unbounded $A \subseteq \lambda$ and a fixed $\sigma$ such that, for all $\alpha \in A$, 
  $\sigma_\alpha = \sigma$. For $i < \theta$, let $x_i = x_{i, \sigma(i)}$. The verification that 
  $\langle x_i \mid i < \theta \rangle$ is as desired is exactly as in the proof of Theorem \ref{diagonal_guessing_theorem}.
\end{proof}

We end with the following corollary to Theorem \ref{theorem_58}. We thank the referee for 
pointing it out to us.

\begin{corollary}
  Suppose that $V$, $W$, $\kappa$, $\mu$, and $\langle \mu_i \mid i < \theta \rangle$ are as in the 
  statement of Theorem \ref{theorem_58} and $2^\mu = \mu^+$ in $W$. Suppose that, in $W$, 
  $\langle \pi_\alpha \mid \alpha < \mu^+ \rangle$ is an enumeration of $\prod_{i < \theta} 
  (\mathcal{P}_\kappa(\mu_i))^V$. Then, for all but boundedly many $i < \theta$, we have 
  $\langle \pi_\alpha(i) \mid \alpha < \mu^+ \rangle \not\in V$.
\end{corollary}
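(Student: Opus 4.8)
The plan is to argue by contradiction. Suppose that $I := \{ i < \theta \mid \langle \pi_\alpha(i) \mid \alpha < \mu^+ \rangle \in V \}$ is unbounded in $\theta$; I will use the columns indexed by $I$ to build, in $W$, a matrix of clubs to which Theorem \ref{theorem_58} applies but whose conclusion it contradicts. First I would note that the hypothesis $2^\mu = \mu^+$ in $W$, together with $\mu$ being strong limit in $V$ (which makes $|(\mathcal{P}_\kappa(\mu_i))^V| < \mu$ for each $i < \theta$), ensures that $\prod_{i < \theta} (\mathcal{P}_\kappa(\mu_i))^V$ has cardinality at most $\mu^+$ in $W$, so that $\langle \pi_\alpha \mid \alpha < \mu^+ \rangle$ really does list \emph{every} element of that product; moreover, since $(\mu^+)^V$ is a $W$-cardinal and there is no $W$-cardinal strictly between $|\mu|^W$ and $(\mu^+)^V$, the symbol $\mu^+$ refers unambiguously to $(\mu^+)^V = (\mu^+)^W$.

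For $i \in I$, write $s_i := \langle \pi_\alpha(i) \mid \alpha < \mu^+ \rangle$, so $s_i \in V$ by assumption. Working in $V$, for each $\alpha < \mu^+$ I would set $\gamma^i_\alpha := \min(\mu_i \setminus s_i(\alpha))$, which is well-defined because $s_i(\alpha) \in \mathcal{P}_\kappa(\mu_i)$ has size $< \kappa \leq \mu_i$, and let $\mathcal{C}_{\alpha, i} := \{ x \in (\mathcal{P}_\kappa(\mu_i))^V \mid \gamma^i_\alpha \in x \}$. For $i \notin I$, set $\mathcal{C}_{\alpha, i} := (\mathcal{P}_\kappa(\mu_i))^V$. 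Each $\mathcal{C}_{\alpha, i}$ is club in $(\mathcal{P}_\kappa(\mu_i))^V$; for each fixed $i$ the sequence $\langle \mathcal{C}_{\alpha, i} \mid \alpha < \mu^+ \rangle$ lies in $V$ (being defined there from $s_i$ and $\mu_i$ when $i \in I$, and constant when $i \notin I$); and the full matrix $\langle \mathcal{C}_{\alpha, i} \mid \alpha < \mu^+, ~ i < \theta \rangle$ is computed in $W$ from $I$ and $\langle s_i \mid i \in I \rangle$, hence lies in $W$. Thus hypothesis (7) of Theorem \ref{theorem_58} holds, while hypotheses (1)--(6) hold because $V$, $W$, $\kappa$, $\mu$, $\langle \mu_i \mid i < \theta \rangle$ are as in that theorem.

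Next I would apply Theorem \ref{theorem_58} to obtain $\langle x_i \mid i < \theta \rangle \in W$ such that, for every $\alpha < \mu^+$, $x_i \in \mathcal{C}_{\alpha, i}$ for all sufficiently large $i < \theta$. Since $x_i \in \mathcal{C}_{0, i} \subseteq (\mathcal{P}_\kappa(\mu_i))^V$ for all large $i$, after modifying $\langle x_i \rangle$ on a bounded set of coordinates (putting, say, $x_i = \emptyset$ there) I may assume $\langle x_i \mid i < \theta \rangle \in \prod_{i < \theta} (\mathcal{P}_\kappa(\mu_i))^V$ while retaining the displayed property. Fix $\beta < \mu^+$ with $\pi_\beta = \langle x_i \mid i < \theta \rangle$. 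Applying the displayed property with $\alpha = \beta$, there is a tail of $i < \theta$ with $x_i \in \mathcal{C}_{\beta, i}$; since $I$ is unbounded, we may pick such an $i$ in $I$, so that $\gamma^i_\beta \in x_i = \pi_\beta(i) = s_i(\beta)$, contradicting $\gamma^i_\beta = \min(\mu_i \setminus s_i(\beta)) \notin s_i(\beta)$. This completes the contradiction, so $I$ is bounded, as desired.

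The substance of the argument is the self-referential definition of the clubs: in column $i \in I$ the $\alpha$-th club is designed to omit exactly the point $\pi_\alpha(i)$, which is possible \emph{only because} the assumption $s_i \in V$ lets us compute $\pi_\alpha(i)$ inside $V$ uniformly in $\alpha$; this is the single place where the contradiction hypothesis is used, and it is exactly what makes the clubs admissible in Theorem \ref{theorem_58}. The diagonalization is then closed off across columns rather than within a single column: one feeds Theorem \ref{theorem_58} the matrix whose $\alpha$-th column of clubs is tailored to $\pi_\alpha$, and then invokes the surjectivity of the enumeration to locate the index $\beta$ of the guessing sequence $\langle x_i \rangle$ itself. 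I expect this cross-column pairing of the $\alpha$-th column with the $\alpha$-th function to be the one mildly non-obvious point; everything else is routine bookkeeping around the hypotheses of Theorem \ref{theorem_58}.
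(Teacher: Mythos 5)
Your proof is correct and follows essentially the same route as the paper's: you build the matrix of clubs in $W$ (with each column in $V$) so that $\pi_\alpha(i)\notin\mathcal{C}_{\alpha,i}$ on the allegedly unbounded set, apply Theorem \ref{theorem_58}, and use surjectivity of the enumeration to land on the guessing sequence itself. The only difference is cosmetic: the paper takes $\mathcal{C}_{\alpha,i}=\{x\mid \pi_\alpha(i)\subsetneq x\}$ where you take the cone $\{x\mid \gamma^i_\alpha\in x\}$ over a single omitted ordinal, and your extra care about $\langle x_i\rangle$ lying in the product is a welcome (if minor) tightening of a point the paper leaves implicit.
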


\begin{proof}
  Suppose not, and let $\langle \pi_\alpha \mid \alpha < \mu^+ \rangle$ be a counterexample. 
  Let $A$ be the set of $i < \theta$ such that $\langle \pi_\alpha(i) 
  \mid \alpha < \mu^+ \rangle \in V$. By assumption, $A$ is unbounded in $\theta$. 
  For $\alpha < \mu^+$ and $i \in A$, let $C_{\alpha, i} = \{x \in (\mathcal{P}_\kappa(\mu_i))^V 
  \mid \pi_\alpha(i) \subsetneq x\}$. For $\alpha < \mu^+$ and $i \in \theta \setminus A$, 
  let $C_{\alpha, i} = (\mathcal{P}_\kappa(\mu_i))^V$. Now $\langle C_{\alpha, i} \mid \alpha < \mu^+, i < \theta \rangle$ 
  satisfies Clause (7) of the statement of Theorem \ref{theorem_58}, so we can find a sequence 
  $\langle x_i \mid i < \theta \rangle$ as in the conclusion of Theorem \ref{theorem_58}. Then 
  there is $\alpha < \mu^+$ such that $x_i = \pi_\alpha(i)$ for all $i < \theta$. 
  Now, for all sufficiently large $i \in A$, we have $x_i \in C_{\alpha, i} = 
  \{x \in (\mathcal{P}_\kappa(\mu_i))^V \mid x_i \subsetneq x\}$, which is a contradiction.
\end{proof}

\bibliographystyle{amsplain}
\bibliography{pseudo-prikry}

\end{document}